\definecolor{betterGreen}{rgb}{0, 0.5, 0}
\tikzstyle{arc}=[->, shorten <=3pt, shorten >=3pt, >=stealth, line width=1.5pt]
\tikzstyle{edge}=[shorten <=2pt, shorten >=2pt, >=stealth, line width=1.2pt]
\tikzstyle{redE}=[shorten <=2pt, shorten >=2pt, red, >=stealth, line width=2pt,
\tikzstyle{blueE}=[shorten <=2pt, shorten >=2pt, blue, >=stealth,
\tikzstyle{greenE}=[shorten <=2pt, shorten >=2pt, betterGreen, >=stealth,
\tikzstyle{vertex}=[circle, fill=white, draw, minimum size=6pt, inner sep=0pt,
\tikzstyle{block} = [rectangle, rounded corners, minimum width=3cm, text width=5.5cm, minimum height=1cm, text centered, draw=black]
\setlist[enumerate]{itemsep=0mm}
\newcommand{\IS}{3K_1}
\newcommand{\R}{{}^r\overline{P_3}}
\newcommand{\B}{{}^b\overline{P_3}}
\newcommand{\RR}{{}^rP_3}
\newcommand{\BB}{{}^bP_3}
\newcommand{\RB}{{}^{rb}P_3}
\newcommand{\RT}{{}^rK_3}
\newcommand{\BT}{{}^bK_3}
\newcommand{\RRB}{{}^{rrb}K_3}
\newcommand{\RBB}{{}^{rbb}K_3}
\newcommand{\calF}{\mathcal{F}}
\newcommand{\calP}{\mathcal{P}}
\newcommand{\calH}{\mathcal{H}}
\newcommand{\calJ}{\mathcal{J}}
\DeclareMathOperator{\CSP}{CSP}
\DeclareMathOperator{\NP}{NP}
\DeclareMathOperator{\PO}{P}
\DeclareMathOperator{\minority}{minority}
\DeclareMathOperator{\majority}{majority}
\newtheorem{theorem}{Theorem}
\newtheorem{lemma}[theorem]{Lemma}
\newtheorem{corollary}[theorem]{Corollary}
\newtheorem{proposition}[theorem]{Proposition}
\newtheorem{observation}[theorem]{Observation}
\newtheorem{problem}[theorem]{Problem}
\newtheorem{question}[theorem]{Question}
\newtheorem{remark}[theorem]{Remark}
\newtheorem{claim}{Claim}
\title{On the expressive power of $2$-edge-colourings of graphs%
\thanks{The authors gratefully acknowledge support from grants UNAM-PAPIIT
IA106425, SEP-CONACYT A1-S-8397 and CONACYT FORDECYT-PRONACES/39570/2020. Nikola
Jedli\v{c}kov\'a was partially supported by SVV–2023–260699. Jan Bok and
Santiago Guzm\'an-Pro were funded by the European Union (ERC, POCOCOP,
101071674). Views and opinions expressed are however those of the author(s) only
and do not necessarily reflect those of the European Union or the European
Research Council Executive Agency. Neither the European Union nor the granting
authority can be held responsible for them.}}
\author[1]{Jan Bok\thanks{jan.bok@matfyz.cuni.cz}} \author[2]{Santiago
Guzm\'an-Pro\thanks{santiago.guzman\_pro@tu-dresden.de}} \author[3]{C\'esar
Hern\'andez-Cruz\thanks{chc@ciencias.unam.mx}}\author[4]{Nikola
Jedli\v{c}kov\'a\thanks{jedlickova@kam.mff.cuni.cz}}
\affil[1]{Department of Algebra, Faculty of Mathematics and Physics, Charles University, Prague, Czech Republic}
\affil[2]{Institut f\"ur Algebra, TU Dresden, Germany}
\affil[3]{Facultad de Ciencias, Universidad Nacional Aut\'onoma de M\'exico, CDMX, M\'exico}
\affil[4]{Department of Applied Mathematics, Faculty of Mathematics and Physics, Charles University, Prague, Czech Republic}
\begin{document}
\date{}

\maketitle
\begin{abstract}
    Given a finite set of $2$-edge-coloured graphs $\calF$ and a hereditary
    property of graphs $\mathcal{P}$, we say that $\calF$ \emph{expresses}
    $\mathcal{P}$ if a graph $G$ has the property $\mathcal{P}$ if and only if
    it admits a $2$-edge-colouring not having any graph in $\calF$ as an induced
    $2$-edge-coloured subgraph. We show that certain classic hereditary classes
    are expressible by some set of $2$-edge-coloured graphs on three vertices.
    We then initiate a systematic study of the following problem. Given  a
    finite set of $2$-edge-coloured graphs $\calF$, structurally characterize
    the hereditary property expressed by $\calF$. In our main results we
    describe all hereditary properties expressed by $\calF$ when $\cal F$
    consists of 2-edge-coloured graphs on three vertices and (1) patterns have
    at most two edges, or (2) $\cal F$ consists of both monochromatic paths and
    a set of coloured triangles.

    On the algorithmic side, we consider the \emph{$\calF$-free colouring
    problem}, i.e., deciding if an input graph admits an $\calF$-free
    $2$-edge-colouring. It follows from our structural characterizations, that
    for all sets considered in (1) and (2) the  $\calF$-free colouring problem
    is solvable in polynomial time. We complement these tractability results
    with a uniform reduction to boolean constraint satisfaction problems which
    yield polynomial-time algorithms that recognize most graph classes
    expressible by a set $\calF$ of $2$-edge-coloured graphs on at most three
    vertices. Finally, we exhibit some sets $\calF$ such that the $\calF$-free
    colouring problem is NP-complete.
\end{abstract}


\section{Introduction}\label{sec:introduction}

It is well-known that every hereditary property is characterized by some set of
forbidden induced subgraphs, often called forbidden \textit{minimal
obstructions}, or forbidden \textit{patterns}.   In many cases, such sets of
forbidden induced subgraphs are infinite, e.g., the class of bipartite graphs is
characterized by the set of all odd cycles. In 1990, Damaschke
\cite{damaschkeTCGT1990} noticed that equipping graphs with an additional
structure results in a more compact set of minimal obstructions for some
hereditary classes.   An \textit{ordered graph} $G_<$, is defined to be a graph
$G$ equipped with a linear order $<$ of its vertex set; the \textit{underlying
graph} of $G_<$ is $G$, and $G_<$ is a \textit{linear ordering} of $G$.   For a
given set $\calF$ of ordered graphs, one can ask, what is the class $\calP$ of
graphs admitting an $\calF$-free linear ordering? A beautiful example found
in~\cite{damaschkeTCGT1990} states that if $\calF$ is the set consisting of
$K_3$ equipped with any linear order of its vertex set, and $P_3$ equipped with
the linear order where the only vertex of degree $2$ is between the two leaves,
then a graph admits an $\calF$-free linear ordering if and only if it is
bipartite. Thus, there are only two ordered minimal obstructions for the class
of bipartite graphs, as opposed to the infinite family of (induced subgraph)
minimal obstructions. Similar examples found in~\cite{damaschkeTCGT1990} include
chordal graphs, forests, and interval graphs. 

Shortly after Damaschke's paper, Duffus, Ginn, and R\"odl~\cite{duffusRSA7}
considered the question above from an algorithmic point of view:  for a given
set $\calF$ of ordered graphs, what is the complexity of deciding if an input
graph $G$ admits an $\calF$-free  linear ordering? Building on this framework,
Hell, Mohar, and Rafiey \cite{hellESA2014} studied sets $\calF$ of linearly
ordered graphs on three vertices and provided a polynomial-time algorithm to
decide if an input graph $G$ admits an $\calF$-free linear ordering.
Equivalently, every graph class defined by means of a forbidden set of linearly
ordered graphs on three vertices can be recognized in polynomial-time. Recently,
Feuilloley and Habib \cite{feuilloleySIDMA35} provided a survey article and
further showed that, with the exception of two classes, all of the
aforementioned classes described by three-vertex patterns can be recognized in
linear time. They also matched these classes with some hereditary properties
easy to describe in well-known terms.

In a different direction,  Skrien~\cite{skrienJGT6} equipped graphs with an
orientation of their edges, and considered the problem of describing a given
family of graphs as the class of graphs admitting an orientation containing no
induced oriented subgraph in a finite set $\calF$ of patterns. He showed that
proper-circular arc graphs, trivially perfect graphs (also known nested interval
graphs),  and comparability graphs are examples of classes for which these
descriptions exist. This work was later extended in~\cite{guzmanArXiv}, where
the authors listed all graph classes defined by forbidden oriented patterns
consisting of exactly three vertices, and in~\cite{guzmanArXiv} they provided
necessary conditions upon certain graph classes for them to admit a
characterization by a finite set of forbidden oriented graphs. In particular, it
follows from their negative results regarding expressibility by finitely many
forbidden equipped graphs, that forests, chordal graphs, and even-hole-free
graphs are not expressible by forbidden orientations. Recently,  from the
algorithmic perspective, Bodirsky and Guzm\'an-Pro \cite{bodirskyArXiv} proved
that for a fixed finite set of finite tournaments $\calF$, there is a complexity
dichotomy (polynomial-time vs.\ NP-complete) for the problems of deciding
whether a given graph admits an $\calF$-free orientation. 

Very recently, \cite{guzmanPHD,guzmanArXiv2} introduced a general framework for
studying equipped graphs, and provided new examples of characterizations arising
from it. Regarding the most recent instances of the study of expressibility by
equipped graphs, we have \cite{guzmanACM438}, where authors equipped graphs with
a circular ordering of its vertex set, and studied classes of graphs that can be
described by forbidding finitely many circularly ordered graphs. It is also
shown that such descriptions can be translated to the linear ordering case.
Similarly, in \cite{paulSTACS2024} Paul and Protopapas extended the linearly
ordered case by considering graphs $G$ equipped with a \textit{tree-layout},
which is a partial ordering of $V(G)$ whose Hasse diagram is a rooted tree. Most
notably, by forbidding certain tree-layouts on three vertices, they defined the
class of proper chordal graphs, which is  a subclass of chordal graphs where the
graph isomorphism problem can be tested in polynomial time.

In this work we consider the problem, analogous to those described above, that
arises when we equip graphs with a (not necessarily proper) $2$-edge-colouring.
This is, given a finite set $\calF$ of $2$-edge-coloured graphs
(\textit{patterns}), we want to know which graphs $G$ admit a $2$-edge-colouring
having no pattern in $\calF$ as an induced $2$-edge-coloured subgraph; in such a
case, we say that the $2$-edge-colouring of $G$ \textit{avoids} $\calF$ or that
it is \textit{$\calF$-free}.   A hereditary property of graphs $\calP$ is
\textit{expressible by forbidden} $2$-edge-coloured graphs when there exists a
finite set of patterns $\calF$ such that a graph $G$ is in $\calP$ if and only
if it admits a $2$-edge-colouring avoiding $\calF$; when it is clear from the
context that we are considering $2$-edge-coloured graphs, we also say that
$\calP$ \textit{is expressed by} $\calF$.  Hence, when we talk about the
\textit{expressive power} of $2$-edge-colouring, we refer to the set of all
hereditary properties expressible by $2$-edge-coloured graphs. The
\textit{$\calF$-free colouring problem} is the problem of deciding whether an
input graph admits an $\calF$-free $2$-edge-colouring.

In this context, the following problems naturally arise:
\vspace{-0.2em}
\begin{enumerate}
  \item Given a finite set of $2$-edge-coloured graphs, find structural
    characterizations of the hereditary property it expresses.

  \item \label{que:two} Given a finite set of $2$-edge-coloured graphs $\calF$,
    determine the complexity of the $\calF$-free colouring problem.
\end{enumerate}

Following a similar program to expression by linear
orderings~\cite{damaschkeTCGT1990,duffusRSA7,feuilloleySIDMA35,hellESA2014} and
by forbidden orientations~\cite{guzmanArXiv,skrienJGT6}, in this work we address
these questions restricted to sets of $2$-edge-coloured graphs on three
vertices.

Our first main result is a characterization of all graph classes expressible by
a set $\calF$ containing both monochromatic paths and some set of
$2$-edge-coloured triangles (Sections~\ref{sec:elementary}
and~\ref{sec:further-elementary}). These include line graphs of bipartite graphs
(\cref{prop:J3J4J8J9}), and line graphs of incidence graphs (\cref{J3J4J7J8J9}).
A nice byproduct of these characterizations is the list of minimal obstructions
to the latter --- the list of minimal obstructions to the former are already
known, and we use them in our proofs. Similarly, our second main result
(\cref{thm:two-edges} in Section~\ref{sec:at-most-two}) characterizes all graph
classes expressible by a set $\calF$ of $2$-edge-coloured graphs on at most $3$
vertices and at most $2$ edges. Some well-structured graph classes expressed by
such sets are semicircular graphs, and co-bipartite graphs. Finally, we leverage
on our structural characterizations to classify the complexity of the
$\calF$-free colouring problem for several sets $\calF$ of $2$-edge coloured
graphs with at most three vertices (Section~\ref{sec:complex}). We also provide
a uniform approach for most of the tractable cases by reducing to tractable
boolean constraint satisfaction problems, namely, to 2-SAT, to Horn-SAT, or to
linear equations over $\mathbb Z_2$. We also exhibit some sets $\calF$ such that
the associated recognition problem is NP-complete.

The rest of the manuscript is structured as follows.  In \cref{sec:prelim}, we
introduce notation and nomenclature that will be used through the rest of the
work.  \cref{sec:simple-fam} includes some basic observations about properties
expressible by $2$-edge-colourings, as well as the first examples of known graph
classes expressible by $2$-edge-colourings. In \cref {sec:elementary}, we study
classes of graphs expressible by a finite set including both monochromatic
copies of $P_3$, obtaining descriptions for most of them in terms of well-known
graph classes\footnote{The family of graphs expressible by the set with exactly
the two monochromatic copies of $P_3$, usually called elementary graphs, arises
naturally when studying claw-free perfect graphs~\cite{chvatalJCTB44}.}.
Building upon these ideas, we deal with classes expressible by forbidding both
monochromatic paths and some set of coloured triangles in
\cref{sec:further-elementary}. In \cref{sec:at-most-two} we present structural
characterizations of all graph classes expressible by a set of $2$-edge coloured
graphs on three vertices and at most $2$ edges. Algorithmic aspects are the main
subject of \cref{sec:complex}; for a variety of finite sets $\calF$ of $2$-edge
coloured graphs with at most three vertices, we classify the hereditary class
expressed by $\calF$ as polynomial-time solvable or NP-complete. The uniform
reduction to boolean CSPs is presented in detail in \cref{sec:uniform}. Two
natural generalizations of the problems we study in this work are presented in
\cref{sec:general}: graphs equipped with a $k$-edge-colouring, and graphs
expressible by $2$-edge-colourings when patterns of order greater than three are
considered. Finally, conclusions and open problems are presented in
\cref{sec:conclusions}.


\section{Preliminaries}
\label{sec:prelim}

For basic terminology and notation, we refer the reader to \cite{bondy2008}.

A \emph{hereditary property} is a class of graphs $\cal P$ such that if $G \in
\cal P$ and $H$ is an induced subgraph of $G$, then $H \in \cal P$. In other
words, it is a class of graphs closed upon taking induced subgraphs. Given a
graph $F$, we say that a graph is \emph{$F$-free} if it does not contain an
induced subgraph isomorphic to $F$. Furthermore, if $\cal{F}$ is a family of
graphs, we say that a graph is \emph{$\cal F$-free} if it is $F$-free for every
$F \in \cal F$.

The \emph{join} of two graphs $G_1$ and $G_2$ is a graph obtained by taking the
disjoint union of $G_1$ and $G_2$ and adding all possible edges with one
endpoint being in $V(G_1)$ and the other one in $V(G_2)$. The \emph{line graph}
of a graph $G$ (denoted by $L (G)$) is the graph where $V(L(G)) = E(G)$ and two
vertices are adjacent if and only if the corresponding edges of $G$ are incident
to a common vertex in $G$.

A graph is \emph{$k$-partite} if its vertex set can be partitioned into $k$
independent sets. A graph is \emph{complete multipartite} if it is $k$-partite
for some $k$ and contains all possible edges between different parts. A graph is
\emph{co-bipartite} if it is the complement of a bipartite graph.
\emph{Clusters} are the graphs obtained as a disjoint union of complete graphs.
They can be also described as $P_3$-free graphs or as the complements of
complete multipartite graphs. We denote by $K_n, K_{i,j}, C_n$, and $P_n$ the
complete graph on $n$ vertices, the complete bipartite graph with parts of size
$i$ and $j$, the cycle on $n$ vertices, and path on $n$ vertices, respectively.

The \emph{Haj\'os graph} is formed by the triangle where for each edge, we
introduce a new vertex and make it adjacent to the endpoints of the respective
edge (see \cref{fig:wonders}). We call $K_{1,3}$ the \emph{claw}. The
\emph{diamond} is the graph obtained from the complete graph $K_4$ by removing
exactly one edge. The \emph{gem} is the join of $P_4$ and $K_1$. The \emph{paw}
is the 4-vertex graph composed of triangle and an extra vertex joined by an edge
with one of the vertices of the triangle. The \emph{$n$-wheel} is the join of
$C_n$ and $K_1$. See \cref{fig:small-graphs} for a depiction of the graphs
defined in the last five sentences. An \emph{odd-hole} is a chordless cycle of
odd length and an \emph{odd anti-hole} is its complement; \emph{even-holes} and
\emph{even anti-holes} are then defined analogously.

\begin{figure}[htb!]
    \centering
    \begin{tikzpicture}
    
        \begin{scope}[scale=0.7,xshift=-2cm]
            \node (L1) at (0,-1.2) {claw};
            \node (c) [vertex] at (0:0){};
            \foreach \i in {0,1,2}{
                \node (\i) [vertex] at ({(360/3)*\i + 90}:1.3){};
                \draw [edge] (c) to (\i);
            }
        \end{scope}
    
        \begin{scope}[scale=0.7,xshift=2cm]
            \node (L1) at (0,-1.3) {paw};
            \node (c) [vertex] at (0:0){};
            \foreach \i in {0,1,2}{
                \node (\i) [vertex] at ({(360/3)*\i + 90}:1.3){};
                \draw [edge] (c) to (\i);
            }
            \draw [edge] (1) to (2);
        \end{scope}
    
        \begin{scope}[scale=0.7,xshift=6cm,yshift=0.3cm]
            \node (L1) at (0,-1.5) {diamond};
            \foreach \i in {0,1,2,3}
                \node (\i) [vertex] at ({(360/4)*\i + 90}:1){};
            \foreach \i in {0,1,2,3}
                \draw [edge] let
                        \n1 = {int(mod(\i+1,4))}
                    in
                        (\i) to (\n1);
            \draw [edge] (0) to (2);
        \end{scope}
    
        \begin{scope}[scale=0.7,yshift=-3.6cm, xshift = -2cm]
            \node (L1) at (0,-1.2) {house};
            \node (t) [vertex] at (90:1.75){};
            \foreach \i in {0,1,2,3}{
                \node (\i) [vertex] at ({(360/4)*\i +45}:1){};
            }
            \foreach \i in {0,1,2,3}
                \draw [edge] let
                        \n1 = {int(mod(\i+1,4))}
                    in
                        (\i) to (\n1);
            \draw [edge] (t) to (0);
            \draw [edge] (t) to (1);
        \end{scope}
    
        \begin{scope}[scale=0.7,xshift=2cm,yshift=-3.3cm]
            \node (L1) at (0,-1.5) {$4$-wheel};
            \node (c) [vertex] at (0:0){};
            \foreach \i in {0,1,2,3}{
                \node (\i) [vertex] at ({(360/4)*\i + 90}:1){};
                \draw [edge] (c) to (\i);
            }
            \foreach \i in {0,1,2,3}
                \draw [edge] let
                        \n1 = {int(mod(\i+1,4))}
                    in
                        (\i) to (\n1);
        \end{scope}
    
        \begin{scope}[scale=0.7,xshift=6cm,yshift=-3.3cm]
            \node (L1) at (0,-1.6) {gem};
            \node (0) [vertex] at (10:1.5){};
            \node (1) [vertex] at (0.7,1){};
            \node (2) [vertex] at (-0.7,1){};
            \node (3) [vertex] at (170:1.5){};
            \node (4) [vertex] at (270:1){};
            \foreach \i in {0,1,2,3}{
                \draw [edge] (4) to (\i);
            }
            \foreach \i in {0,1,2}
                \draw [edge] let
                        \n1 = {\i+1}
                    in
                        (\i) to (\n1);
        \end{scope}

        \begin{scope}[scale=0.7, xshift=10cm, yshift=0.3cm]
            \node (L1) at (0,-1.5) {kite};
            \foreach \i in {0,1,2,3}
                \node (\i) [vertex] at ({(360/4)*\i + 90}:0.9){};
            \foreach \i in {0,1,2,3}
                \draw [edge] let
                        \n1 = {int(mod(\i+1,4))}
                    in
                        (\i) to (\n1);
            \draw [edge] (0) to (2);
           
            \node (n) [vertex] at (0:2){};
            \draw [edge] (3) to (n);
            
        \end{scope}
    
        \begin{scope}[scale=0.7,xshift=10cm,yshift=-3.3cm]
            \node (L1) at (0,-1.6) {butterfly};
            \begin{scope}[xshift=-1cm]
            \foreach \i in {0,1,2}{
                \node (\i) [vertex] at ({(360/3)*\i}:1){};
            }
            \foreach \i in {0,1,2}
                \draw [edge] let
                        \n1 = {int(mod(\i+1,3))}
                    in
                        (\i) to (\n1);
            \end{scope}
            \begin{scope}[xshift=1cm]
            \foreach \i in {0,1,2}{
                \node (\i) [vertex] at ({(360/3)*\i + 180}:1){};
            }
            \foreach \i in {0,1,2}
                \draw [edge] let
                        \n1 = {int(mod(\i+1,3))}
                    in
                        (\i) to (\n1);
            \end{scope}
        \end{scope}
    
\end{tikzpicture}
\caption{Some celebrities in the world of small graphs.}
\label{fig:small-graphs}
\end{figure}
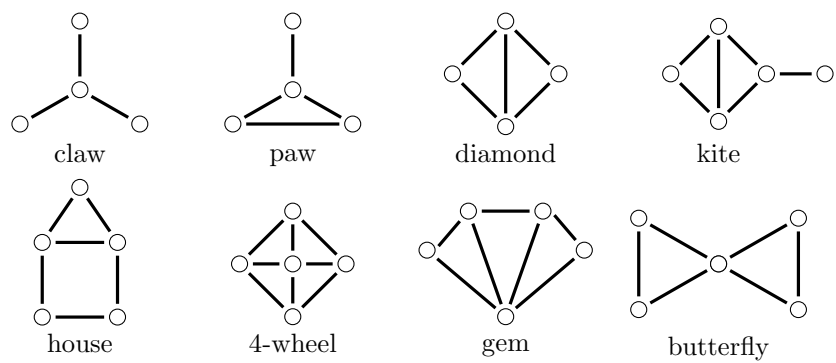

A graph is \emph{semi-circular} if it is an intersection graph of open
semicircles on a circle. A graph $G$ is \emph{perfect} if for every induced
subgraph $H$ of $G$, $\omega(H) = \chi(H)$, where $\omega$ and $\chi$ denote
clique number and chromatic number, respectively. A graph is a
\emph{comparability graph} if it is transitively orientable, i.e.\ its edges can
be directed so that if $(u,v)$ and $(v,w)$ are directed edges, then so is
$(u,w)$. The class of \emph{cographs} has several nice descriptions and
characterizations. One of them says that these are precisely $P_4$-free graphs.
Also, cographs can be defined as graphs which can be constructed from a
single-vertex graph by a repeated use of the following operations: join of two
cographs and disjoint union of two cographs.

We say that two distinct vertices of a graph are \emph{true twins} if they have
the same closed neighbourhoods.


\section{Simple families}
\label{sec:simple-fam}

The family $\calJ$ of all $2$-edge-coloured patterns on three vertices is
depicted in \Cref{fig:3-patterns}; red edges are dashed. We begin our study by
considering subsets of $\calJ$ that express some simple families of graphs.

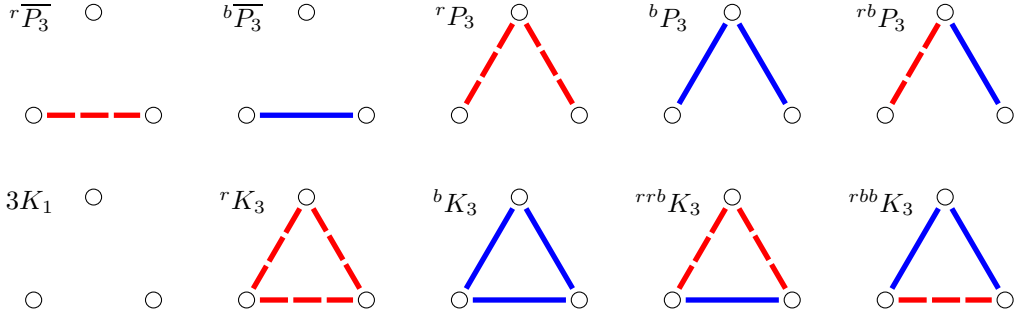
\begin{figure}[htb!]
\centering
\begin{tikzpicture}
\begin{scope}[scale=0.7]  
  
  \begin{scope}
    \node (L2) at (-1.2,1.2) {$\R$}; \foreach \i in {0,1,2} \draw ({(360/3)*\i
    + 90}:1.3) node(\i)[vertex]{};

    \draw [redE] (1) to (2);
  \end{scope}
  
  \begin{scope}[xshift=4cm]
    \node (L2) at (-1.2,1.2) {$\B$}; \foreach \i in {0,1,2} \draw ({(360/3)*\i
    + 90}:1.3) node(\i)[vertex]{};

    \draw [blueE] (1) to (2);
  \end{scope}
  
  \begin{scope}[xshift=8cm]
    \node (L2) at (-1.2,1.2) {$\RR$}; \foreach \i in {0,1,2} \draw ({(360/3)*\i
    + 90}:1.3) node(\i)[vertex]{};

    \draw [redE] (0) to (1); \draw [redE] (2) to (0);
  \end{scope}

  \begin{scope}[xshift=12cm]
    \node (L1) at (-1.2,1.2) {$\BB$}; \foreach \i in {0,1,2} \draw ({(360/3)*\i
    + 90}:1.3) node(\i)[vertex]{};

    \draw [blueE] (0) to (1); \draw [blueE] (2) to (0);
  \end{scope}

    \begin{scope}[xshift=16cm]
    \node (L2) at (-1.2,1.2) {$\RB$}; \foreach \i in {0,1,2} \draw ({(360/3)*\i
    + 90}:1.3) node(\i)[vertex]{};

    \draw [redE]  (0) to (1); \draw [blueE] (2) to (0);
  \end{scope}

  
  \begin{scope}[yshift=-3.5cm]
    \node (L1) at (-1.2,1.2) {$\IS$}; \foreach \i in {0,1,2} \draw ({(360/3)*\i
    + 90}:1.3) node(\i)[vertex]{};
  \end{scope}

  \begin{scope}[xshift=4cm,yshift=-3.5cm]
    \node (L2) at (-1.2,1.2) {$\RT$}; \foreach \i in {0,1,2} \draw ({(360/3)*\i
    + 90}:1.3) node(\i)[vertex]{};

    \foreach \i/\j in {0/1,1/2,2/0} \draw [redE] (\i) to (\j);
  \end{scope}
  
  \begin{scope}[xshift=8cm,yshift=-3.5cm]
    \node (L2) at (-1.2,1.2) {$\BT$}; \foreach \i in {0,1,2} \draw ({(360/3)*\i
    + 90}:1.3) node(\i)[vertex]{};

    \foreach \i/\j in {0/1,1/2,2/0} \draw [blueE] (\i) to (\j);
  \end{scope}

  \begin{scope}[xshift=12cm,yshift=-3.5cm]
    \node (L2) at (-1.2,1.2) {$\RRB$}; \foreach \i in {0,1,2} \draw ({(360/3)*\i
    + 90}:1.3) node(\i)[vertex]{};

      \draw [redE]  (0) to (1); \draw [blueE] (1) to (2); \draw [redE]  (2) to
      (0);
  \end{scope}
  
  \begin{scope}[xshift=16cm,yshift=-3.5cm]
    \node (L2) at (-1.2,1.2) {$\RBB$}; \foreach \i in {0,1,2} \draw ({(360/3)*\i
    + 90}:1.3) node(\i)[vertex]{};
    
    \draw [blueE] (0) to (1); \draw [redE]  (1) to (2); \draw [blueE] (2) to
    (0);
  \end{scope}
\end{scope}
\end{tikzpicture}
\caption{The family $\mathcal{J}$ of all $2$-edge-coloured patterns on $3$
vertices; red edges are dashed.}
\label{fig:3-patterns}
\end{figure}

As a first remark, notice that  if $\calF'$ is obtained from $\calF$ by swapping
the red and blue edges in the $2$-edge-colouring of every graph in $\calF$, then
$\calF'$ expresses the same graph class as $\calF$. Also, notice that the
hereditary class expressed by any singleton subset of $\mathcal{J}$ other than
$\{ \IS \}$ is the family of all graphs; there is always a monochromatic
colouring avoiding a single pattern. The set $\{ \IS \}$ expresses the property
of having independence number at most $2$.   Also, the sets $\{ \R, \B \}$, $\{
\RR, \BB, \RB \}$, and $\{ \RT, \BT, \RRB, \RBB \}$ express the classes of
complete multipartite graphs, clusters, and triangle-free graphs, respectively.
All these observations are particular instances of the following result.

\begin{lemma}
\label{lem:intersection}
  Let $\calF$ be a set of $2$-edge-coloured graphs and $\calH$ be a set of
  graphs. If $\calH'$ is the set of $2$-edge-coloured graphs obtained by
  considering every $2$-edge-colouring of every graph in $\calH$, then
  $\mathcal{F \cup H'}$ expresses the class of $\calH$-free graphs that admit an
  $\calF$-free $2$-edge-colouring.
\end{lemma}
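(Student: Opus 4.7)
The plan is to prove both directions of the expression equivalence directly from the definitions, by unpacking what it means for a $2$-edge-colouring to avoid $\calF \cup \calH'$ and what it means for a graph to be $\calH$-free.

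For the forward direction I would start with a graph $G$ that admits a $2$-edge-colouring $c$ avoiding $\calF \cup \calH'$. Since $c$ already avoids every pattern in $\calF$, the graph $G$ trivially admits an $\calF$-free $2$-edge-colouring. To see that $G$ is also $\calH$-free, I would argue by contradiction: if some $H \in \calH$ were an induced subgraph of $G$, then the restriction of $c$ to $V(H)$ would be some $2$-edge-colouring of $H$, and therefore an element of $\calH'$ by construction. This would contradict the hypothesis that $c$ avoids $\calH'$.

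For the converse, I would assume $G$ is $\calH$-free and fix an $\calF$-free $2$-edge-colouring $c$ of $G$ (guaranteed by hypothesis). I need to verify that $c$ in fact avoids all of $\calF \cup \calH'$. Avoidance of $\calF$ is immediate from the choice of $c$. For avoidance of $\calH'$, note that every pattern in $\calH'$ has some $H \in \calH$ as underlying graph; since $G$ contains no induced copy of any $H \in \calH$, the coloured graph $(G,c)$ contains no induced copy of any element of $\calH'$ either, regardless of how $c$ colours the edges.

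The argument is essentially a bookkeeping exercise separating the two kinds of forbidden patterns, so there is no genuine obstacle; the only point that requires slight care is verifying that $\calH'$, defined as every $2$-edge-colouring of every member of $\calH$, is rich enough to forbid \emph{every} possible coloured copy of each $H \in \calH$, which is precisely what guarantees the equivalence between "$G$ is $\calH$-free" and "no $2$-edge-colouring of $G$ contains an induced pattern from $\calH'$".
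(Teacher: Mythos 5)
Your proposal is correct and follows essentially the same argument as the paper's proof: the paper's key observation that a graph admits an $\calH'$-free colouring if and only if it is $\calH$-free is exactly what you unpack via the restriction-to-$V(H)$ argument, and both directions then reduce to the same bookkeeping. No gaps; your version is simply a more explicit rendering of the paper's two-sentence proof.
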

  
\begin{proof}
  It is easy to verify that a graph admits an $\calH'$-free colouring if and
  only if it is $\calH$-free.   Hence, if a graph $G$ admits an $(\calF \cup
  \calH')$-free $2$-edge-colouring, then $G$ is $\calH$-free, and it admits an
  $\calF$-free $2$-edge-colouring.   Conversely, if a graph $G$ is $\calH$-free,
  then any $\calF$-free $2$-edge-colouring of $G$ is also an $(\calF \cup
  \calH')$-free $2$-edge-colouring.
\end{proof}

\begin{remark}\label{rmk:trivial}
    A particular instance of Lemma~\ref{lem:intersection} is when $\calF$ is a
    \emph{trivial set}, i.e., there is a colour $c$ such that for every graph
    $G$, either $\calF$ contains all $2$-edge-colourings of $G$, or $\calF$
    contains no $c$-monochromatic copy of $G$. In this case, $\calF$ expresses
    the class of $\calH$-free graphs, where $\calH$ is the set of graphs $G$
    such that $\calF$ contains all $2$-edge-colourings of $G$.
\end{remark}

The next lemma states a condition for a subset of $\calJ$ to express the class
of all graphs.

\begin{lemma}
\label{lem:trivial-constraint}
  Let $\calF$ be a set of $2$-edge-coloured graphs. There is a graph that does
  not admit an $\calF$-free $2$-edge-colouring if and only if $\calF$ contains
  at least one monochromatic graph of each colour or an empty graph. 
\end{lemma}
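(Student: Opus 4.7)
The plan is to prove each direction of the biconditional separately.

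For the $(\Leftarrow)$ direction, I would split into the two conditions on $\calF$. If $\calF$ contains an empty graph $F_E$ on, say, $n$ vertices, then the edgeless graph $G = \overline{K_n}$ already witnesses the statement: any $2$-edge-colouring of $G$ is vacuous on edges, so the induced $2$-edge-coloured subgraph on any $n$ vertices is exactly $F_E$. Hence $G$ admits no $\calF$-free colouring. If instead $\calF$ contains a red monochromatic graph $F_R$ and a blue monochromatic graph $F_B$ (each with at least one edge of the relevant colour), I would invoke the (asymmetric) induced Ramsey theorem of Deuber, Erd\H{o}s--Hajnal--P\'osa, and R\"odl: there exists a graph $G$ such that every $2$-edge-colouring of $E(G)$ contains either a red induced copy of the underlying graph of $F_R$ or a blue induced copy of the underlying graph of $F_B$. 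Such a $G$ clearly admits no $\calF$-free colouring. A convenient way to reduce to the symmetric version is to apply induced Ramsey to the disjoint union of the underlying graphs of $F_R$ and $F_B$ and observe that a monochromatic induced copy of this disjoint union contains a monochromatic induced copy of $F_R$ or $F_B$ in the appropriate colour.

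For the $(\Rightarrow)$ direction I would argue contrapositively. Suppose $\calF$ contains no empty graph and, without loss of generality (by colour symmetry, cf.\ the first remark after \cref{fig:3-patterns}), no red monochromatic graph. I claim that colouring all edges of an arbitrary graph $G$ red yields an $\calF$-free $2$-edge-colouring. Indeed, every induced $2$-edge-coloured subgraph of this colouring has all its edges red, so it is either an empty graph or a graph with at least one edge, all coloured red; by hypothesis neither type lies in $\calF$. Hence every graph admits an $\calF$-free colouring, as required.

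The only non-trivial step is the appeal to induced Ramsey in the $(\Leftarrow)$ direction; this is a well-known result that I would cite rather than reprove. All remaining ingredients are immediate once the definitions of \emph{monochromatic graph} and \emph{empty graph} are pinned down (monochromatic of colour $c$ meaning at least one edge, all of colour $c$, so that the two alternatives in the statement are genuinely distinct).
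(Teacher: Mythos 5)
Your proof is correct and follows essentially the same route as the paper: the all-red colouring handles the contrapositive of the forward direction, and the Induced Ramsey Theorem handles the backward direction. In fact you supply a detail the paper leaves implicit, namely applying the (symmetric) induced Ramsey theorem to the disjoint union of the underlying graphs of $F_R$ and $F_B$ to force a monochromatic copy of the right pattern in either colour.
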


\begin{proof}
  If $\calF$ contains neither an empty graph nor a monochromatic red graph, then
  by colouring every edge of a graph red, we obtain an $\calF$-free
  $2$-edge-colouring of that graph.

  For the other direction, if there is an empty graph in $\calF$, then such a
  graph does not admit an $\calF$-free colouring. Otherwise, we can invoke the
  Induced Ramsey Theorem%
  \footnote{According to~\cite{diestel2016,foxAM219}, the Induced Ramsey Theorem
  was discovered independently, around 1973, by Deuber~\cite{deuber1975}, by
  Erd\"os, Hajnal, and P\'osa~\cite{erdos1975}, and by R\"odl~\cite{rodl1973}.}
  (Theorem 9.3.1 in~\cite{diestel2016}), stating that for every graph $H$, there
  exists a graph $G$ such that every 2-edge-colouring of $G$ contains a
  monochromatic induced copy of $H$.
\end{proof}

With similar arguments we can easily characterize the sets $\calF$ that express
a finite class of graphs. Given a positive integer $n$, we denote by ${^b}K_n$
and by ${^r}K_n$ the blue and the red monochromatic clique of order $n$,
respectively.

\begin{lemma}\label{lem:triangles+IS}
    The class of graphs that admit an $\calF$-free colouring is finite if and
    only if there are positive integers $\ell, n,m$ such that $\ell K_1,{^b}K_n,
    {^r}K_m\in \calF$.
\end{lemma}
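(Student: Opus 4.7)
The plan is to prove the biconditional by handling each direction separately. The backward direction (sufficiency of the stated condition) is an immediate application of the three-colour Ramsey theorem, while the forward direction (necessity) reduces, by contrapositive, to exhibiting an infinite family of graphs admitting an $\calF$-free colouring whenever one of the three types of pattern is missing from $\calF$.

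For the direction assuming that $\ell K_1$, ${^b}K_n$, and ${^r}K_m$ all lie in $\calF$, I would view any 2-edge-coloured graph on $N$ vertices as a $3$-colouring of the edges of $K_N$ with colours ``blue edge'', ``red edge'', and ``non-edge''. Setting $N := R(\ell,n,m)$ to be the three-colour Ramsey number, every such 3-coloured $K_N$ contains either $\ell$ mutually non-adjacent vertices, $n$ vertices spanning a blue clique, or $m$ vertices spanning a red clique; equivalently, an induced copy of $\ell K_1$, ${^b}K_n$, or ${^r}K_m$, respectively. Hence no graph on at least $R(\ell,n,m)$ vertices admits an $\calF$-free 2-edge-colouring, so the class is finite.

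For the forward direction, I would argue contrapositively: if $\calF$ contains no $\ell K_1$ (for any $\ell$), or no ${^b}K_n$ (for any $n$), or no ${^r}K_m$ (for any $m$), I construct an infinite family of graphs admitting an $\calF$-free colouring. In the first case, every graph $\ell K_1$ is vacuously $\calF$-free, since all of its induced 2-edge-coloured subgraphs are edgeless and none of them lies in $\calF$ by hypothesis. In the second case, for every $n$ the all-blue colouring of $K_n$ is $\calF$-free, because the induced 2-edge-coloured subgraphs of this colouring are exactly the blue monochromatic cliques on at most $n$ vertices, none of which belong to $\calF$. The third case is symmetric, using all-red colourings of $K_m$. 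Each scenario yields infinitely many graphs in the class, completing the contrapositive. The only subtle point is consistent handling of the single-vertex pattern $K_1$ (which coincides with $1\cdot K_1$, ${^b}K_1$, and ${^r}K_1$), but it causes no issue since in each subcase of the contrapositive it is precisely this type of pattern that is excluded.
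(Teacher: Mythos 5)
Your proposal is correct and follows essentially the same route as the paper: the three-colour Ramsey theorem (with ``non-edge'' as a third colour) for sufficiency, and for necessity the observation that if one of the three pattern types is absent from $\calF$ then all edgeless graphs, all all-blue complete graphs, or all all-red complete graphs are $\calF$-free. Your explicit treatment of the degenerate pattern $K_1$ is a small point the paper leaves implicit, but nothing of substance differs.
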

\begin{proof}
    If $\ell K_1,{^b}K_n, {^r}K_m\in \calF$, it follows from Ramsey's theorem
    that every $2$-edge-colouring of a large enough graph will contain either an
    independent set on $\ell$ vertices, a blue clique on $n$ vertices, or a red
    clique on $m$ vertices. It is straightforward to observe that if $\calF$
    does not contain an independent set, a blue clique, and a red clique, then
    the class of $\calF$-free graphs contains all empty graphs or all complete
    graphs (or both).
\end{proof}

Using Remark~\ref{rmk:trivial}, it is possible to deal with a fair number of
subsets of $\calJ$. Recall that a graph is complete multipartite if and only if
it is $\overline{P_3}$-free.

\begin{proposition}
\label{pro:comp-mult}
  Let $\calF$ be a set of $2$-edge-coloured graphs.   The class of complete
  multipartite graphs is expressed by $\calF$ whenever $\{\R,\B\} \subseteq
  \calF$ and either
  \[
    \calF \subseteq \{\R, \B, \RR, \RB, \RT, \RRB, \RBB\},
  \]
  or, dually,
  \[
    \calF \subseteq \{\R, \B, \BB, \RB, \BT, \RRB, \RBB\}.
  \]
\end{proposition}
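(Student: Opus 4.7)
The plan is to invoke Remark~\ref{rmk:trivial} (equivalently, \cref{lem:intersection} applied with $\calH = \{\overline{P_3}\}$ so that $\calH' = \{\R, \B\}$). Since $\{\R, \B\} \subseteq \calF$ already consists of all $2$-edge-colourings of $\overline{P_3}$, the class expressed by $\calF$ is exactly the class of $\overline{P_3}$-free graphs that admit an $(\calF \setminus \{\R, \B\})$-free $2$-edge-colouring. Because $\overline{P_3}$-free graphs are precisely the complete multipartite graphs, it suffices to show that every complete multipartite graph admits a $2$-edge-colouring that avoids every pattern in $\calF \setminus \{\R, \B\}$.

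For the first case, I would observe directly from \cref{fig:3-patterns} that every pattern in $\{\RR, \RB, \RT, \RRB, \RBB\}$ contains at least one red edge. Therefore the all-blue colouring of any graph $G$ avoids every pattern in this set vacuously, and in particular avoids any subset of it. This handles every $\calF$ with $\calF \setminus \{\R, \B\} \subseteq \{\RR, \RB, \RT, \RRB, \RBB\}$.

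The dual case follows from the colour-swap symmetry noted at the start of \cref{sec:simple-fam}: swapping red and blue in every pattern of $\calF$ yields a family expressing the same graph class. Applying this swap to $\{\R, \B, \BB, \RB, \BT, \RRB, \RBB\}$ produces a subset of $\{\R, \B, \RR, \RB, \RT, \RRB, \RBB\}$ (with $\RRB$ and $\RBB$ trading roles), reducing the second case to the first. Alternatively, one can argue directly that every pattern in $\{\BB, \RB, \BT, \RRB, \RBB\}$ contains a blue edge, so the all-red colouring works.

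There is no real obstacle here; the argument is essentially bookkeeping on top of Remark~\ref{rmk:trivial}. The only point that merits a moment's attention is verifying from the definitions/figure that each of the five patterns listed after $\R, \B$ in the first inclusion really does carry a red edge (and dually for the second inclusion), which is immediate by inspection.
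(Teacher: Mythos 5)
Your proposal is correct and follows essentially the same route as the paper, which presents this proposition as a direct consequence of Remark~\ref{rmk:trivial} (itself an instance of \cref{lem:intersection}): since $\{\R,\B\}$ comprises all $2$-edge-colourings of $\overline{P_3}$ and every remaining pattern in the first (resp.\ second) set carries a red (resp.\ blue) edge, the set $\calF$ is trivial and the monochromatic blue (resp.\ red) colouring witnesses that the expressed class is exactly the $\overline{P_3}$-free, i.e.\ complete multipartite, graphs. Your spelled-out verification and the colour-swap reduction for the dual case match the paper's intent exactly.
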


Similarly, since $\{3K_1, \overline{P_3}\}$-free graphs are precisely complete
multipartite graphs where each part has at most two vertices, the following
statement is also a direct implication of Remark~\ref{rmk:trivial}.

\begin{proposition}
\label{pro:comp-minus-match}
  Let $\calF$ be a set of $2$-edge-coloured graphs.   The class of complete
  graphs minus a matching is expressed by $\calF$ whenever $\{\IS,\R,\B\}
  \subseteq \calF$ and either
  \[
    \calF \subseteq \{\IS, \R, \B, \RR, \RB, \RT, \RRB, \RBB\},
  \]
  or, dually,
  \[
    \calF \subseteq \{\IS, \R, \B, \BB, \RB, \BT, \RRB, \RBB\}.
  \]
\end{proposition}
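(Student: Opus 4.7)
The plan is to apply Remark~\ref{rmk:trivial} in essentially the same fashion as the proof of Proposition~\ref{pro:comp-mult}. First, I would verify that each of the two hypothesized supersets of $\{\IS, \R, \B\}$ yields a \emph{trivial} set in the sense of that remark. For the first inclusion the appropriate colour is $c$ being blue, since neither $\BB$ nor $\BT$ lies in $\{\IS, \R, \B, \RR, \RB, \RT, \RRB, \RBB\}$; for the dual inclusion it is $c$ being red. Because $\calF$ contains no graph on more than three vertices, triviality only needs to be checked on the three-vertex graphs $3K_1$, $\overline{P_3}$, $P_3$, and $K_3$ (smaller graphs appear in no element of $\calF$, so the triviality condition is vacuous for them).

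Next, I would identify the set $\calH$ of graphs whose every $2$-edge-colouring belongs to $\calF$. The hypothesis $\{\IS, \R, \B\} \subseteq \calF$ puts $3K_1$ and $\overline{P_3}$ into $\calH$, while the absence of the $c$-monochromatic patterns from the prescribed superset ensures that no colouring of $P_3$ or $K_3$ forces membership in $\calH$. Consequently $\calH = \{3K_1, \overline{P_3}\}$ in both cases, regardless of which subset between $\{\IS, \R, \B\}$ and the full superset the set $\calF$ actually is.

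Finally, I would invoke the structural fact recalled immediately before the statement: the $\{3K_1, \overline{P_3}\}$-free graphs are precisely the complete multipartite graphs in which every part has size at most $2$; equivalently, their complements are disjoint unions of $K_1$'s and $K_2$'s, so these are exactly the complete graphs minus a matching. Remark~\ref{rmk:trivial} then yields that $\calF$ expresses this class, and the dual case is handled by swapping the roles of red and blue throughout. I do not foresee a substantive obstacle; the only point requiring care is the bookkeeping to confirm that triviality holds uniformly for every intermediate $\calF$, which reduces to the observation that the forbidden $c$-monochromatic patterns on $P_3$ and $K_3$ are already absent from the superset itself.
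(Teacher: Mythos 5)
Your proposal is correct and follows exactly the route the paper intends: the paper states this proposition is ``a direct implication of Remark~\ref{rmk:trivial}'' together with the preceding observation that $\{3K_1,\overline{P_3}\}$-free graphs are the complete multipartite graphs with parts of size at most two. Your verification that $\calF$ is trivial with $c$ blue (resp.\ red) and that $\calH=\{3K_1,\overline{P_3}\}$ is precisely the bookkeeping the paper leaves implicit.
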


We can derive an analogous result for clusters, using similar arguments as
before and the fact that clusters are precisely $P_3$-free graphs.

\begin{proposition}
\label{pro:cluster}
  Let $\calF$ be a set of $2$-edge-coloured graphs.   The class of clusters is
  expressed by $\calF$ whenever $\calF$ contains $\{\RR, \BB, \RB\}$ and either
  \[
    \calF \subseteq \{\R,\RR,\BB,\RB\}
  \]
  or, dually,
  \[
    \calF \subseteq \{\B,\RR,\BB,\RB\}.
  \]
\end{proposition}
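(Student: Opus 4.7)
The plan is to follow the same template used in the proofs of \cref{pro:comp-mult} and \cref{pro:comp-minus-match}, exploiting the fact that clusters are exactly the $P_3$-free graphs together with \cref{lem:intersection} and \cref{rmk:trivial}. The key observation is that $\{\RR,\BB,\RB\}$ is precisely the set of all three $2$-edge-colourings of $P_3$. Hence by \cref{lem:intersection} (with $\calH = \{P_3\}$), a graph admits a $\{\RR,\BB,\RB\}$-free colouring if and only if it is $P_3$-free, i.e., a cluster. This already handles the ``only if'' direction for both ranges of $\calF$ in the statement, since $\{\RR,\BB,\RB\} \subseteq \calF$ in every case.

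For the ``if'' direction, I would show that every cluster admits a $\calF$-free colouring when $\calF \subseteq \{\R,\RR,\BB,\RB\}$ by assigning the colour blue to every edge. Since no edge is red, such a colouring trivially avoids the three red-involving patterns $\R$, $\RR$ and $\RB$. Moreover, since a cluster contains no induced $P_3$, the all-blue colouring also avoids $\BB$. The dual case $\calF \subseteq \{\B,\RR,\BB,\RB\}$ is handled symmetrically by colouring every edge red. Equivalently, one can check directly that each of the four sets $\calF$ covered by the statement is a trivial set in the sense of \cref{rmk:trivial}: the only $3$-vertex graph whose every $2$-edge-colouring lies in $\calF$ is $P_3$, while $K_1$, $K_2$, $2K_1$, $\overline{P_3}$, $3K_1$ and $K_3$ all fail that condition, yielding $\calH = \{P_3\}$ and recovering the class of clusters.

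There is no substantial obstacle; the argument is essentially bookkeeping once one identifies $\{\RR,\BB,\RB\}$ as the set of colourings of $P_3$ and recognizes that the extra pattern $\R$ (respectively $\B$) is harmless for the monochromatic blue (respectively red) colouring.
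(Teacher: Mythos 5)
Your proposal is correct and follows essentially the same route as the paper, which derives \cref{pro:cluster} from \cref{rmk:trivial} (equivalently \cref{lem:intersection}) together with the fact that clusters are exactly the $P_3$-free graphs, the extra pattern $\R$ (resp.\ $\B$) being absorbed by the monochromatic blue (resp.\ red) colouring. Both your direct all-blue/all-red argument and your trivial-set verification match the paper's intended ``similar arguments as before.''
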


To finish with this short stream of results, recall that a graph is a disjoint
union of at most two cliques if and only if it is $\{P_3, 3K_1\}$-free.

\begin{proposition}
\label{pro:two-cliques}
  Let $\calF$ be a set of $2$-edge-coloured graphs.   The class of graphs which
  are a disjoint union of at most two complete graphs is expressed by $\calF$
  whenever $\{\IS,\RR,\BB,\RB\} \subseteq \calF$ and either
  \[
    \calF \subseteq \{\IS,\R,\RR,\BB,\RB\},
  \]
  or, dually,
  \[
    \calF \subseteq \{\IS,\B,\RR,\BB,\RB\}.
  \]
\end{proposition}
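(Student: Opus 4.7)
The plan is to reduce the statement directly to Remark~\ref{rmk:trivial}, using the characterization recalled just before the proposition: a graph is a disjoint union of at most two complete graphs if and only if it is $\{3K_1, P_3\}$-free. Thus the goal is to show that every $\calF$ satisfying the hypotheses is a trivial set in the sense of that remark, and that the associated collection $\calH$ of three-vertex graphs all of whose $2$-edge-colourings lie in $\calF$ is exactly $\{3K_1, P_3\}$.

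I would first focus on the case $\{\IS,\RR,\BB,\RB\} \subseteq \calF \subseteq \{\IS,\R,\RR,\BB,\RB\}$; the other is the red/blue dual. Take the distinguished colour from Remark~\ref{rmk:trivial} to be blue and check the triviality condition for each three-vertex graph $G$ in turn. For $G = 3K_1$, its only colouring $\IS$ is in $\calF$; for $G = P_3$, all three colourings $\RR, \BB, \RB$ are in $\calF$; for $G = \overline{P_3}$, the blue monochromatic colouring $\B$ is excluded by the upper bound on $\calF$; and for $G = K_3$, the blue monochromatic colouring $\BT$ is likewise excluded. Since $\calJ$ lists every $2$-edge-coloured pattern on three vertices, this case analysis is exhaustive, so $\calF$ is indeed trivial with distinguished colour blue.

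Applying Remark~\ref{rmk:trivial}, $\calF$ then expresses the class of $\calH$-free graphs with $\calH = \{3K_1, P_3\}$, which, by the characterization recalled above, is precisely the class of graphs that are disjoint unions of at most two cliques. The dual case follows by the same argument with red as the distinguished colour, after swapping the two colours throughout.

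The only point requiring a little care is checking that the triviality condition remains valid at the top of the interval, i.e.\ when $\R$ (respectively $\B$) is added. However, $\R$ is not a blue monochromatic three-vertex pattern, so adding it does not destroy the condition for blue; symmetrically for $\B$ and red. Once this is noted, the whole proof is straightforward bookkeeping over the ten patterns of $\calJ$, entirely parallel to the proofs of Propositions~\ref{pro:comp-mult}, \ref{pro:comp-minus-match}, and~\ref{pro:cluster}.
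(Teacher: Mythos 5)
Your proof is correct and follows essentially the same route the paper intends: the proposition is stated there as a direct consequence of Remark~\ref{rmk:trivial} combined with the fact, recalled immediately before it, that disjoint unions of at most two cliques are exactly the $\{P_3,3K_1\}$-free graphs. Your explicit verification that $\calF$ is a trivial set with distinguished colour blue (resp.\ red) and that $\calH=\{3K_1,P_3\}$ just spells out the bookkeeping the paper leaves implicit.
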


We conclude this section with a simple observation.

\begin{observation}
\label{obs:comp-indep}
  The set $\{\R, \B, \RR, \BB, \RB\}$ expresses the class of graphs which are
  either empty or complete.
\end{observation}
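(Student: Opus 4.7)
The strategy is to reduce the problem to an uncoloured graph-theoretic claim via Remark~\ref{rmk:trivial} and then handle the resulting statement with a short case analysis. First, I would observe that $\{\R,\B,\RR,\BB,\RB\}$ is exactly the union of all $2$-edge-colourings of $\overline{P_3}$ (producing $\R$ and $\B$) with all $2$-edge-colourings of $P_3$ (producing $\RR$, $\BB$ and $\RB$). Hence this set is trivial in the sense of Remark~\ref{rmk:trivial} (take $c$ to be, say, red), and it expresses the class of $\{P_3,\overline{P_3}\}$-free graphs.

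It then suffices to verify that a graph $G$ is $\{P_3,\overline{P_3}\}$-free if and only if it is either empty or complete. One direction is immediate, because both $P_3$ and $\overline{P_3}$ contain an edge and a non-edge. For the converse I would argue by contradiction: assuming $G$ has both an edge $uv$ and a non-edge $xy$, I would split into two cases according to whether $\{u,v\}$ and $\{x,y\}$ share a vertex. In the shared case, say $v=x$, the triple $\{u,v,y\}$ has exactly one or exactly two edges (depending on whether $uy$ is an edge), so it induces an $\overline{P_3}$ or a $P_3$. In the disjoint case, I would look at $\{u,x,y\}$ when $ux$ is an edge, and at $\{u,v,x\}$ when $ux$ is a non-edge; each such triple contains one edge together with one non-edge, so it is again a $P_3$ or a $\overline{P_3}$.

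Since every branch produces a forbidden induced subgraph, the assumption that $G$ is neither empty nor complete is untenable. No substantial obstacle is anticipated: once the triviality observation is made, the remainder of the argument is elementary casework on at most four vertices.
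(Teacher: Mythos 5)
Your proof is correct and matches the paper's (implicit) intent: the observation is stated without proof immediately after a sequence of results all derived from Remark~\ref{rmk:trivial}, and your argument --- noting that $\{\R,\B,\RR,\BB,\RB\}$ is precisely the set of all $2$-edge-colourings of $\overline{P_3}$ and $P_3$, hence a trivial set expressing $\{P_3,\overline{P_3}\}$-free graphs, followed by the elementary verification that these are exactly the empty and complete graphs --- is the intended one.
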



\section{Elementary graphs, and line graphs of bipartite graphs.}
\label{sec:elementary}

A graph $G$ that admits a $2$-edge-colouring with no monochromatic induced path
on three vertices (i.e., a $\{\RR,\BB\}$-free colouring) is called an
\textit{elementary graph}, and such a colouring is called an \textit {elementary
colouring} of $G$. These graphs were first considered by Chv\'atal and
Sbihi~\cite{chvatalJCTB44} while studying claw-free perfect graphs, and then
characterized by Maffray and Reed~\cite{maffrayJCTB75} in terms of
\emph{augmentations} of line graphs of bipartite graphs.

In this section, we show that some well-structured subclasses of elementary
graphs are also expressible by forbidden $2$-edge-coloured graphs on at most
three vertices; namely, co-bipartite graphs, line graphs of bipartite graphs,
line graphs of incidence graphs, and graphs with chromatic index at most $2$.

We begin by stating Maffray and Reed's~\cite{maffrayJCTB75} structural
characterization of elementary graphs, for which we introduce the following
definitions. An edge is \textit{flat} if it does not lie in any triangle. Let
$xy$ be a flat edge in a graph $G$, and $B = (X,Y)$ a co-bipartite graph
disjoint from $G$ with at least one $XY$-edge. We construct a graph $G'$ from
$G-\{x,y\}$ and $B$ by adding all edges between vertices in $X$ and vertices in
$N(x)-y$, and all edges between vertices in $Y$ and vertices in $N(y)-x$. In
this case, we say that $G$ is \textit{augmented along} $xy$, that $xy$ is
\textit{augmented}, and that $B$ is the \textit{augment} of $xy$. 

Consider a matching $x_1y_1,\dots, x_ky_k$ of a graph $G$ such that each edge
$x_iy_i$ is a flat edge. For $i\in\{1,\dots, k\}$, let $B_i = (X_i,Y_i)$ be a
co-bipartite graph with at least one $X_iY_i$-edge. We construct a graph $G'$ by
augmenting each edge $x_iy_i$ with augment $B_i$ ($G'$ is the same regardless of
the order in which $G$ is augmented~\cite{maffrayJCTB75}). An
\textit{augmentation} of $G$ is a graph obtained with this construction. 

The five \textit{wonders} are depicted in \cref{fig:wonders}, and we call the
individually: Haj\'os, lighthouse, mausoleum, garden, and colossus.  

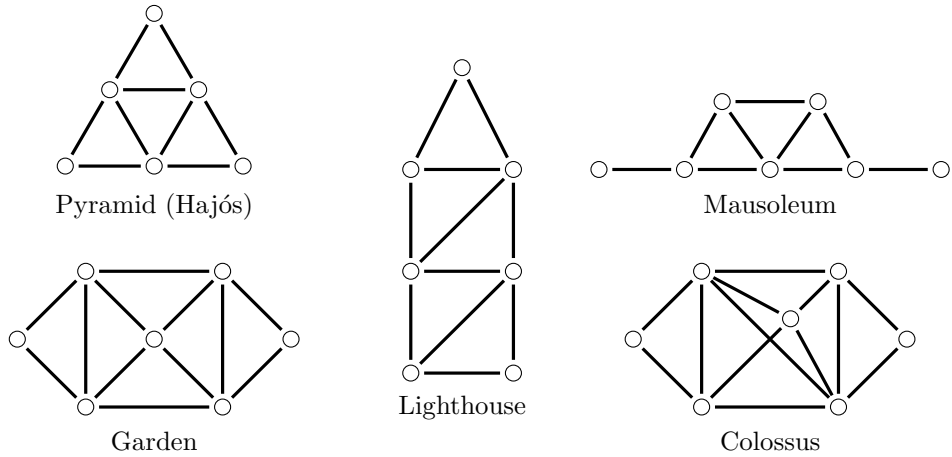
\begin{figure}[ht!]
\centering
\begin{tikzpicture}
\begin{scope}[scale=0.9]
  \begin{scope}[yshift=0.8cm]
    \node (L1) at (0,-1.35) {Pyramid (Haj\'os)};
    \draw (210:1.5) node(0)[vertex]{};
     \draw (270:0.75) node(1)[vertex]{};
      \draw (330:1.5) node(2)[vertex]{};
      \draw (150:0.75) node(3)[vertex]{};
      \draw (30:0.75) node(4)[vertex]{};
      \draw (90:1.5) node(5)[vertex]{};
    
     \foreach \i/\j in {0/1, 1/2, 0/3, 3/1, 1/4, 4/2, 3/4, 3/5, 5/4}
    \draw [edge] (\i) to (\j);
  \end{scope}
  
  \begin{scope}[xshift=4.5cm,yshift=-3cm]
    \node (L2) at (0,-.5) {Lighthouse};
    \draw (-0.75,0) node(0)[vertex]{};
    \draw (0.75,0) node(1)[vertex]{};
    \draw (-0.75,1.5) node(2)[vertex]{};
    \draw (0.75,1.5) node(3)[vertex]{};
    \draw (-0.75,3) node(4)[vertex]{};
    \draw (0.75,3) node(5)[vertex]{};
    \draw (0,4.5) node(6)[vertex]{};
    
    \foreach \i/\j in {0/1, 0/2, 0/3, 1/3, 2/3, 2/4, 2/5, 3/5, 4/5, 6/4, 6/5}
    \draw [edge] (\i) to (\j);
  \end{scope}
  
  \begin{scope}[xshift=9cm]
    \node (L3) at (0,-.5) {Mausoleum};
     \draw (-2.5,0) node(0)[vertex]{};
     \draw (-1.25,0) node(1)[vertex]{};
      \draw (0,0) node(2)[vertex]{};
      \draw (1.25,0) node(3)[vertex]{};
      \draw (2.5,0) node(4)[vertex]{};
      \draw (-.7,1) node(5)[vertex]{};
      \draw (.7,1) node(6)[vertex]{};
    
     \foreach \i/\j in {0/1, 1/2, 2/3, 3/4, 5/6, 5/1, 5/2, 6/2, 6/3}
    \draw [edge] (\i) to (\j);
  \end{scope}

  \begin{scope}[yshift=-3.5cm]
     \node (L1) at (0,-.5) {Garden};
    \draw (-1,0) node(0)[vertex]{};
     \draw (1,0) node(1)[vertex]{};
      \draw (-1,2) node(2)[vertex]{};
      \draw (1,2) node(3)[vertex]{};
      \draw (-2,1) node(4)[vertex]{};
      \draw (0,1) node(5)[vertex]{};
      \draw (2,1) node(6)[vertex]{};
    
     \foreach \i/\j in {0/1, 0/2, 2/3, 3/1, 5/0, 5/1, 5/2, 5/3, 4/0, 4/2, 1/6, 3/6}
    \draw [edge] (\i) to (\j);
  \end{scope}
  
  \begin{scope}[xshift=9cm,yshift=-3.5cm]
   \node (L3) at (0,-.5) {Colossus};
    \draw (-1,0) node(0)[vertex]{};
     \draw (1,0) node(1)[vertex]{};
      \draw (-1,2) node(2)[vertex]{};
      \draw (1,2) node(3)[vertex]{};
      \draw (-2,1) node(4)[vertex]{};
      \draw (.3,1.3) node(5)[vertex]{};
      \draw (2,1) node(6)[vertex]{};
    
     \foreach \i/\j in {0/1, 0/2, 2/3, 3/1, 5/0, 5/1, 5/2, 5/3, 4/0, 4/2, 1/6, 3/6,1/2}
    \draw [edge] (\i) to (\j);
    \end{scope}
\end{scope}
\end{tikzpicture}
\caption{``The five wonders of the non-elementary world''~\cite{maffrayJCTB75}.}
\label{fig:wonders}
\end{figure}

\begin{theorem}\label{thm:elementary-graphs}\cite{maffrayJCTB75}
For a graph $G$, the following statements are equivalent:
\begin{enumerate}
  \item $G$ is an elementary graph.
  \item $G$ is claw-free, perfect, and contains none of the five wonders.
  \item $G$ is an augmentation of the line graph of a bipartite multigraph.
\end{enumerate}
\end{theorem}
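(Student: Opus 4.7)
The plan is to establish the cycle of implications $(3) \Rightarrow (1) \Rightarrow (2) \Rightarrow (3)$. For $(3) \Rightarrow (1)$, I would first handle the case when $G = L(H)$ for a bipartite multigraph $H$ with parts $A$ and $B$. Each vertex of $G$ corresponds to an edge of $H$, and for two adjacent vertices of $G$, the corresponding edges of $H$ share exactly one endpoint, which lies in either $A$ or $B$. Colour the edge of $G$ red in the first case and blue in the second. An induced monochromatic $\RR$ on vertices $e_1,e_2,e_3$ of $G$ would require $e_1,e_2$ to share an $A$-endpoint $a$ and $e_2,e_3$ to share an $A$-endpoint $a'$; but $e_2$ has a unique endpoint in $A$, so $a=a'$, forcing $e_1$ and $e_3$ to be adjacent and yielding a triangle rather than an induced $P_3$. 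To extend the colouring to an augmentation, for each augmented flat edge $x_iy_i$ with augment $B_i=(X_i,Y_i)$, colour all edges within $X_i\cup Y_i$ arbitrarily (say, red inside $X_i$ and inside $Y_i$, blue between $X_i$ and $Y_i$), and colour the edges from $X_i$ to $N(x_i)\setminus\{y_i\}$ the same way the original edges incident to $x_i$ were coloured, analogously for $Y_i$. Flatness of $x_iy_i$ and the pairwise disjointness used by Maffray--Reed guarantee that no new monochromatic $P_3$ is created.

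For $(1) \Rightarrow (2)$, claw-freeness is immediate from pigeonhole: any induced $K_{1,3}$ with center $v$ has three pendant edges, two of which must share a colour, producing an induced monochromatic $P_3$ at $v$. Perfection can be derived via the Strong Perfect Graph Theorem by showing that any odd hole $C_{2k+1}$ ($k\ge 2$) and any long odd anti-hole fails to admit a $\{\RR,\BB\}$-free 2-edge-colouring: consecutive edges of the cycle must alternate colours around the hole (otherwise two consecutive edges sharing a colour yield an induced monochromatic $P_3$ on the corresponding three vertices), and alternation is impossible on a cycle of odd length. For the wonders, I would verify case-by-case that the flat edges and triangle-heavy structure force conflicting colour assignments; for instance, in the Haj\'os graph each of the three ``inner'' edges lies in a triangle with a degree-two vertex whose two edges share a colour by pigeonhole, propagating constraints that cannot be satisfied simultaneously.

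The main obstacle is $(2) \Rightarrow (3)$, which is the substance of Maffray and Reed's result. The natural strategy is an induction on $|V(G)|$, locating a flat edge $xy$ in $G$ whose neighbourhoods $N(x)\setminus\{y\}$ and $N(y)\setminus\{x\}$ can be shown to induce cliques using claw-freeness and perfection together with the absence of the five wonders; these two cliques with the flat edge between them form a co-bipartite augment $B=(X,Y)$ that can be ``unaugmented'' by contracting $B$ back to a single flat edge, yielding a smaller graph $G'$ still in class $(2)$, hence by induction a line graph of a bipartite multigraph, from which one reconstructs $G$ as an augmentation. The delicate part is proving the clique structure of the local neighbourhoods: here the wonders arise precisely as the minimal obstructions encountered when this decomposition fails, and ruling out each wonder requires a careful case analysis combining Beineke--Whitney-type forbidden subgraph characterizations of line graphs with the perfection hypothesis. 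Rather than redoing this substantial structural argument, I would cite~\cite{maffrayJCTB75} directly for this implication, as the paper does.
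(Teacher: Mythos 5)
The first thing to note is that the paper does not actually prove this theorem: it is imported wholesale from Maffray and Reed~\cite{maffrayJCTB75}, so citing that reference for the hard implication $(2)\Rightarrow(3)$ is exactly what the paper does, and your sketches of the other two implications go beyond anything in the text. The closest in-paper analogue of your argument is the proof of \cref{lem:red-augmentations}, and comparing against it exposes a concrete error in your step $(3)\Rightarrow(1)$: the colouring you propose for the augments does not work. Suppose the flat edge $x_iy_i$ is red in the line-graph colouring; by flatness every edge from $x_i$ to $N(x_i)\setminus\{y_i\}$ then lies on an induced $P_3$ through $y_i$ and must be blue. You colour the $X_iY_i$-edges blue and the new edges from $X_i$ to $N(x_i)\setminus\{y_i\}$ blue (``the same way the original edges incident to $x_i$ were coloured''). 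Now pick $w\in N(x_i)\setminus\{y_i\}$, $u\in X_i$, and $v\in Y_i$ with $uv$ an $X_iY_i$-edge (the definition of an augment guarantees one exists). Flatness of $x_iy_i$ gives $w\notin N(y_i)$, so $vw$ is a non-edge of the augmented graph, and $w,u,v$ induce a blue $P_3$. The colours inside the augment cannot be chosen ``arbitrarily'': as in \cref{lem:red-augmentations}, the $X_iY_i$-edges must receive the colour of the original flat edge $x_iy_i$, and all remaining new edges (inside the cliques $X_i$, $Y_i$ and towards $N(x_i)\setminus\{y_i\}$ and $N(y_i)\setminus\{x_i\}$) the opposite colour; any assignment that is fixed independently of the colour of $x_iy_i$, like yours, breaks in one of the two cases.

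The second gap is in $(1)\Rightarrow(2)$. Your alternation argument correctly shows that odd holes of length at least five are not elementary, but it does not apply to odd antiholes: in $\overline{C_{2k+1}}$ with $k\ge 3$, three ``consecutive'' vertices along the natural Hamiltonian cycle of short chords induce a triangle rather than a $P_3$, so no alternation is forced along that cycle, and the antihole is not itself an induced cycle. One really has to locate a different odd constraint cycle. For example, in $\overline{C_7}$ the vertices $u$, $u+4$, $u+1$ induce a $P_3$ centred at $u+4$, forcing the distance-three edges $\{u,u+4\}$ and $\{u+1,u+4\}$ to get different colours; writing these as $\{j,j+3\}$ this says the colouring is a proper $2$-colouring of the $7$-cycle $j\mapsto j+3$ on $\mathbb{Z}_7$, which is impossible. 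Alternatively (and closer to how perfection of elementary graphs is actually established), perfection can be deduced from statement $(3)$ itself, since line graphs of bipartite multigraphs are perfect by K\H{o}nig's theorem and augmentation preserves perfection. As written, your proof of perfection is silent precisely at the antiholes. The remainder of the proposal --- claw-freeness by pigeonhole, the case analysis for the five wonders, and deferring $(2)\Rightarrow(3)$ to~\cite{maffrayJCTB75} --- is reasonable and consistent with how the paper treats this result.
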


Elementary graphs can be equivalently defined in terms of the
\textit{Gallai-graph} construction. Consider a graph $G$. The \textit{Gallai
graph} of $G$ is the graph $Gal(G)$\footnote{This notation was introduced in
\cite{sunJCTB53}, where the author mentions ``This construction was used by
Gallai in his investigation of comparability graphs, hence our notation''.} with
vertex set $E(G)$ where there is an edge $ef$ if $e$ and $f$ induce a $P_3$ in
$G$. It is evident that a $2$-edge-colouring of $G$ without a monochromatic copy
of $P_3$ defines a bipartition of $Gal(G)$. Conversely, every $2$-colouring of
$Gal(G)$ with independent chromatic classes, can be extended to a
$2$-edge-colouring of $G$ with no monochromatic paths on $3$ vertices. Thus, a
graph $G$ is an elementary graph  if and only if $Gal(G)$ is a bipartite graph.
A graph $G$ is \textit{Gallai-perfect}~\cite{sunJCTB53} if $Gal(G)$ is odd-hole
free.  Thus, every elementary graph is Gallai-perfect. 

\begin{proposition}
\label{Gal(G)}
  The following statements are equivalent for a graph $G$:
  \begin{enumerate}
    \item $G$ is an elementary graph.
    \item $Gal(G)$ is a bipartite graph.
    \item $G$ is a claw-free Gallai-perfect graph.
  \end{enumerate}
\end{proposition}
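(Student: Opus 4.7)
The plan is to establish the equivalences via $(1)\Leftrightarrow(2)$ and $(2)\Leftrightarrow(3)$, exploiting the correspondence between $2$-edge-colourings of $G$ avoiding $\{\RR,\BB\}$ and proper $2$-colourings of $Gal(G)$. The equivalence $(1)\Leftrightarrow(2)$ is essentially definitional and is already implicit in the paragraph preceding the statement: a map $c\colon E(G)\to\{\text{red},\text{blue}\}$ avoids both $\RR$ and $\BB$ as induced $2$-edge-coloured subgraphs if and only if no two edges that induce a $P_3$ in $G$ receive the same colour, which is precisely a proper $2$-colouring of $Gal(G)$. Hence $G$ is elementary if and only if $Gal(G)$ is bipartite.

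For $(2)\Rightarrow(3)$, suppose $Gal(G)$ is bipartite. Then it contains no odd cycle, so in particular no induced odd cycle of length at least five, and $G$ is Gallai-perfect. To see that $G$ is claw-free, I would argue by contraposition: if $G$ contained a claw $K_{1,3}$ with centre $v$ and independent leaves $a,b,c$, then any two of the edges $va,vb,vc$ share exactly $v$ and have non-adjacent other endpoints, so they induce a $P_3$ in $G$; the three edges would then form a triangle in $Gal(G)$, contradicting bipartiteness.

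The main direction is $(3)\Rightarrow(2)$, where the task is to show $Gal(G)$ has no odd cycle. Since $G$ is Gallai-perfect, $Gal(G)$ already has no induced odd cycle of length at least five; combined with triangle-freeness of $Gal(G)$ this rules out every odd induced cycle, and hence (by passing to a shortest odd cycle) every odd cycle, giving bipartiteness. So the crux is to prove that the claw-freeness of $G$ forces $Gal(G)$ to be triangle-free. I would do this by a short case analysis on three edges $e_1,e_2,e_3$ of $G$ forming a triangle in $Gal(G)$, i.e., pairwise inducing a $P_3$ in $G$. Each pair $e_i,e_j$ must therefore share exactly one vertex and have non-adjacent other endpoints; writing $v_{ij}=e_i\cap e_j$ yields three possibilities. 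If $v_{12}=v_{13}=v_{23}=v$, the three edges form a claw at $v$ whose leaves are pairwise non-adjacent, contradicting claw-freeness. If the three intersection vertices are pairwise distinct, then $e_1,e_2,e_3$ are the edges of a triangle in $G$, and no two of them induce a $P_3$, a contradiction. The remaining mixed case, in which exactly two of the $v_{ij}$ coincide while the third is distinct, turns out to be combinatorially impossible because an edge meeting both of the other two edges in the same vertex $v$ forces the third intersection vertex to equal $v$ as well. The only real subtlety is this case analysis; once it is checked, the three equivalences follow at once.
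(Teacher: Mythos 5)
Your proof is correct and follows essentially the same route as the paper: the equivalence of (1) and (2) is read off from the correspondence between elementary colourings and bipartitions of $Gal(G)$, and the rest reduces to the observation that $G$ contains a claw if and only if $Gal(G)$ contains a triangle, so that bipartiteness of $Gal(G)$ amounts to claw-freeness of $G$ plus odd-hole-freeness of $Gal(G)$. The only difference is that the paper asserts the claw--triangle correspondence as ``not hard to notice'' while you carry out the (correct) case analysis explicitly.
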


\begin{proof}
The first two items are equivalent as argued in the paragraph preceding this
statement. Also, it is not hard to notice that $G$ contains a claw if and only
if $Gal(G)$ contains a triangle. Thus, $Gal(G)$ is bipartite if and only if it
is odd-hole free and $G$ is claw-free. Therefore, the three statements are
equivalent.
\end{proof}

It immediately follows from \cref{thm:elementary-graphs} that line graphs of
bipartite multigraphs, and co-bipartite graphs are elementary graphs. We show
that co-bipartite graphs also admit  a natural  description by forbidden
$2$-edge-coloured graphs. 

\begin{proposition}
\label{pro:co-bip}
  Let $\calF$ be a set of $2$-edge-coloured graphs. The class of co-bipartite
  graphs is expressed by $\calF$ if $\{\IS, \RR, \BB\}\subseteq \calF$ and
  either
  \[
    \calF\subseteq\{ \IS, \R, \RR, \BB, \RT, \RBB \} \text{ or }
    \calF\subseteq\{ \IS, \R, \RR, \BB, \BT, \RRB \}
  \]
\end{proposition}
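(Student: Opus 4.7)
The plan is to exploit monotonicity: the class expressed by $\calF$ shrinks as $\calF$ grows, so it suffices to prove (i) every co-bipartite graph admits a colouring avoiding the maximal option $\{\IS, \R, \RR, \BB, \RT, \RBB\}$, and (ii) every graph admitting a colouring avoiding the minimal option $\{\IS, \RR, \BB\}$ is co-bipartite. The second alternative for $\calF$ is, up to swapping red and blue, the colour-dual of the first, so the red-blue symmetry noted at the start of \cref{sec:simple-fam} handles it from (i) and (ii).

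For (i), fix a clique partition $V(G) = A \cup B$ of a co-bipartite graph $G$ and colour every edge inside $A$ or inside $B$ blue and every cross edge red. The blue subgraph is the disjoint union of two cliques, so there is no induced blue $P_3$, giving $\BB$-freeness; the red subgraph is bipartite with parts $A$ and $B$, giving $\RR$-freeness and $\RT$-freeness. A would-be $\RBB$-triangle would contain two blue edges sharing a vertex, forcing all three of its vertices into one of $A$ or $B$, but then the third edge would also be blue, a contradiction. A would-be $\R$-pattern would consist of a cross edge together with a third vertex non-adjacent to both of its endpoints; this is impossible, since every vertex lies in $A$ or $B$ and is therefore adjacent, via the relevant clique, to the corresponding endpoint. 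Finally, $\IS$-freeness holds because $\overline{G}$ is bipartite and hence triangle-free.

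For (ii), the colouring decomposes $E(G)$ into a red cluster graph and a blue cluster graph. Each vertex $v$ receives a pair of labels $(r(v), b(v))$ recording the red and the blue cliques containing it (singleton cliques are used for vertices with no red or no blue neighbour); two distinct vertices cannot share both labels, since their connecting edge would then be simultaneously red and blue. Build the simple bipartite graph $H$ whose vertex classes are the red cliques and the blue cliques, with one edge $r(v)b(v)$ for each $v \in V(G)$. Two vertices of $G$ are non-adjacent exactly when their label pairs differ in both coordinates, so a matching of size $k$ in $H$ corresponds to an independent set of size $k$ in $G$. Since $G$ is $\IS$-free, the matching number of $H$ is at most two; K\"onig's theorem then yields a vertex cover of $H$ of size at most two, each of whose elements indexes a clique of $G$. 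Consequently $V(G)$ is covered by at most two cliques, so $G$ is co-bipartite.

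The key technical step is the K\"onig--matching argument in (ii), which converts the $\IS$-freeness hypothesis into a bipartition of $V(G)$ into cliques; the verifications in (i) are routine case analysis.
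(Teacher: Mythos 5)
Your part (i) is correct and coincides with the paper's colouring (cliques blue, cross edges red), and the reduction to the two extreme sets via monotonicity and colour symmetry is also how the paper proceeds. The problem is in part (ii), and it is a genuine gap: the very first assertion, that a $\{\IS,\RR,\BB\}$-free colouring ``decomposes $E(G)$ into a red cluster graph and a blue cluster graph,'' is false. The pattern $\RR$ is forbidden as an \emph{induced} $2$-edge-coloured subgraph, so $\RR$-freeness only says that whenever $uv,vw$ are red, the pair $uw$ is an edge of $G$ --- it may perfectly well be a \emph{blue} edge, since the triangle $\RRB$ is not in $\calF_0=\{\IS,\RR,\BB\}$. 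Already the triangle with two red edges and one blue edge is an $\calF_0$-free colouring of $K_3$ whose red subgraph is an (induced) $P_3$; likewise any colouring of $K_4$ is $\{\RR,\BB\}$-free, including the one whose red edges form a $C_4$. Consequently the labels $r(v)$, $b(v)$ are not well defined (red components are not cliques), vertices in the same red component need not be adjacent, and distinct vertices can share both labels without their edge being ``simultaneously red and blue''; the bipartite graph $H$ and the K\"onig argument built on it collapse.

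What your argument never touches is the actual content of direction (ii): one must exclude odd antiholes. The paper does this by citing \cref{thm:elementary-graphs}: a graph with a $\{\RR,\BB\}$-free colouring is elementary, hence perfect, hence contains no odd antihole; combined with $\IS$-freeness the complement is triangle-free, and since a shortest odd cycle in the complement would be induced (a triangle or an odd hole, i.e.\ an odd antihole of $G$), the complement is bipartite. If you want to avoid invoking Maffray--Reed, you would need a direct proof that no $\overline{C_{2k+1}}$ with $k\ge 2$ admits a $\{\RR,\BB\}$-free colouring; your K\"onig scheme does work verbatim for the larger forbidden set $\{\IS,\RR,\BB,\RRB,\RBB\}$ (where the red and blue subgraphs genuinely are clusters), but that is a strictly weaker statement than the one needed here.
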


\begin{proof}
  We consider the case when $\calF \subseteq \{ \IS, \R, \RR, \BB, \RT, \RBB
  \}$, the remaining one is symmetric. Let $\calF_M =\{ \IS, \R, \RR, \BB, \RT,
  \RBB \}$ and $\calF_0 = \{\IS, \RR, \BB\}$. We will show that any co-bipartite
  graph admits an $\calF_M$-free $2$-edge colouring, and that if $G$ admits a
  $\calF_0$-free $2$-edge-colouring, then $G$ is a co-bipartite graph. Clearly,
  this is sufficient to prove the claim. 

  Let $G$ be a co-bipartite graph. By colouring all edges in each of the two
  cliques blue, and all the remaining edges red, we obtain a $2$-edge-colouring
  of $G$. It is easy to check that this colouring is $\calF_M$-free.

  Now, suppose that $G$ admits an $\calF_0$-free $2$-edge-colouring. In
  particular, $G$ admits a $2$-edge-colouring with no induced monochromatic
  $P_3$, so, by \cref{thm:elementary-graphs}, $G$ is odd-anti-hole-free. Since
  $\IS\in \calF_0$, $G$ is co-triangle-free. Thus, $G$ is a co-bipartite graph.
\end{proof}

\begin{corollary}
\label{cor:J0J3J4}
  The following statements are equivalent for a $3K_1$-free graph $G$:
  \begin{enumerate}
    \item $G$ is a co-bipartite graph.
    \item $G$ is an elementary graph.
    \item $G$ is a Gallai-perfect graph.
  \end{enumerate}
\end{corollary}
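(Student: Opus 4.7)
The plan is to show the cycle $(1) \Rightarrow (2) \Rightarrow (3) \Rightarrow (1)$, with the last implication being where the $3K_1$-free hypothesis does the real work.

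For $(1) \Rightarrow (2)$, I would simply invoke the construction already used in the proof of Proposition~\ref{pro:co-bip}: a co-bipartite graph is $3K_1$-free and admits a $\{\RR,\BB\}$-free $2$-edge-colouring (colour each clique blue and the cross-edges red), so by definition it is elementary. Alternatively, one can cite Theorem~\ref{thm:elementary-graphs} directly since co-bipartite graphs arise as augmentations of the empty (line) graph.

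For $(2) \Rightarrow (3)$, I would quote the remark made in the paragraph preceding Proposition~\ref{Gal(G)}: if $G$ is elementary, then $Gal(G)$ is bipartite, hence contains no odd cycle, and in particular no odd hole, so $G$ is Gallai-perfect.

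The implication $(3) \Rightarrow (1)$ is the only one that requires the hypothesis $3K_1$-free, and this is the step I expect to be the most content-carrying. The key observation is that the claw $K_{1,3}$ contains an induced $3K_1$ on its three leaves; consequently, every $3K_1$-free graph is automatically claw-free. Thus, if $G$ is $3K_1$-free and Gallai-perfect, then $G$ is a claw-free Gallai-perfect graph, and Proposition~\ref{Gal(G)} immediately yields that $G$ is elementary. Combining both hypotheses, $G$ then admits a $\{\IS,\RR,\BB\}$-free $2$-edge-colouring (the elementary colouring trivially avoids $\IS$ since $G$ itself does), and Proposition~\ref{pro:co-bip} concludes that $G$ is co-bipartite. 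There is no real obstacle here; the corollary is essentially a direct packaging of Propositions~\ref{pro:co-bip} and~\ref{Gal(G)} together with the elementary remark relating $3K_1$-freeness and claw-freeness.
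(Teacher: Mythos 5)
Your proposal is correct and uses exactly the same ingredients as the paper's proof: Proposition~\ref{pro:co-bip} for the link between co-bipartite and elementary, and Proposition~\ref{Gal(G)} together with the observation that $3K_1$-free graphs are claw-free for the link between elementary and Gallai-perfect. Arranging the argument as a cycle rather than two biconditionals is an immaterial difference.
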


\begin{proof}
  The equivalence between the first two items follows from \cref{pro:co-bip},
  and the equivalence between the last two items follows from \cref{Gal(G)} and
  from the fact that $3K_1$-free graphs are claw-free graphs.
\end{proof}

As mentioned in~\cite{sunJCTB53}, the class of Gallai-perfect graphs contains
co-bipartite graphs and bipartite graphs. Contrary to co-bipartite graphs, the
class of bipartite graphs is not contained in the class of elementary graphs
(the claw is not an elementary graph). Nonetheless, the intersection of
bipartite graphs and elementary graphs turns out to have a natural description
by forbidden $2$-edge-coloured graphs.

\begin{proposition}
\label{prop:346789}
  The following statements are equivalent for a graph $G$:
  \begin{enumerate}
    \item $G$ is a bipartite elementary graph.
    \item $G$ has edge-chromatic index at most $2$.
    \item $G$ admits an  $\{\RR,\BB,\RT,\BT,\RRB,\RBB\}$-free
    $2$-edge-colouring.
  \end{enumerate}
\end{proposition}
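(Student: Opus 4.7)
The plan is to first reformulate statement (3) using Lemma~\ref{lem:intersection}: the patterns $\RT, \BT, \RRB, \RBB$ are exactly the four $2$-edge-colourings of $K_3$, so a graph admits an $\{\RR,\BB,\RT,\BT,\RRB,\RBB\}$-free colouring if and only if it is triangle-free and admits an $\{\RR,\BB\}$-free colouring; in other words, (3) says precisely that $G$ is triangle-free and elementary. With this reformulation, I would prove the chain $(2) \Rightarrow (3) \Rightarrow (1) \Rightarrow (2)$.

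The pivotal observation underlying everything is that in a triangle-free graph any two edges sharing a vertex automatically induce a copy of $P_3$, since the third potential edge would create a triangle. Consequently, a $2$-edge-colouring of a triangle-free graph is $\{\RR,\BB\}$-free if and only if it is a proper edge-colouring.

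For $(2) \Rightarrow (3)$: if $\chi'(G)\le 2$ then $\Delta(G)\le 2$, so $G$ is a disjoint union of paths and cycles, and in particular triangle-free; any proper $2$-edge-colouring of $G$ is $\{\RR,\BB\}$-free by the pivotal observation, and triangle-freeness kills the remaining four patterns. For $(1) \Rightarrow (2)$: given an elementary colouring of a bipartite $G$, bipartiteness yields triangle-freeness, so the elementary colouring is proper, giving $\chi'(G)\le 2$.

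The main step is $(3) \Rightarrow (1)$. Here $G$ is triangle-free and equipped with an elementary colouring $c$. If some vertex $v$ had degree at least $3$, the pigeonhole principle would give two edges at $v$ of the same colour; by triangle-freeness these two edges together with their endpoints induce a monochromatic $P_3$, contradicting that $c$ avoids $\RR$ and $\BB$. Hence $\Delta(G)\le 2$, so $G$ is a disjoint union of paths and cycles; since $c$ is proper, every cycle is even, and so $G$ is bipartite. The degree bound is the key technical point of the argument, as it forces the structure of $G$ to be restricted enough that triangle-freeness and bipartiteness coincide.
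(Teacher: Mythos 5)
Your proof is correct and takes essentially the same route as the paper's, which is just a terser version of the same argument: the pivotal fact that on a triangle-free graph an $\{\RR,\BB\}$-free colouring is exactly a proper edge-colouring, plus the degree bound forcing $G$ to be a union of paths and even cycles. One cosmetic slip in $(2)\Rightarrow(3)$: being a disjoint union of paths and cycles does not by itself give triangle-freeness --- you need that $C_3$ has chromatic index $3$ (which it does), so the fix is immediate.
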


\begin{proof}
  The first item implies the last one because every bipartite graph $G$ is
  triangle-free, thus any $\{\RR,\BB\}$-free $2$-edge-colouring of $G$ is also
  obviously $\{\RR,\BB,\RT,\BT,\RRB,\RBB\}$-free. Also, since any graph with
  edge-chromatic index at most $2$ is bipartite and admits a $2$-edge-colouring
  with no monochromatic $P_3$, the second item implies the first one. Finally,
  the equivalence between the last two items follows from the definition of
  proper edge-colourings.
\end{proof}

As previously mentioned, line graphs of bipartite multigraphs are elementary
graphs. Now, we show that by considering the restriction to line graphs of
bipartite graphs (without parallel edges), we recover a subclass of elementary
graphs that can be expressed by forbidden $2$-edge-coloured graphs.

\begin{proposition}
\label{prop:J3J4J8J9}
  The following statements are equivalent for a graph $G$:
  \begin{enumerate}
    \item $G$ admits a $\{\RR, \BB, \RRB, \RBB\}$-free $2$-edge-colouring.
    \item $G$ is the line graph of a bipartite graph.
    \item $G$ is a $\{$claw, diamond, odd-hole$\}$-free  graph.
  \end{enumerate}
\end{proposition}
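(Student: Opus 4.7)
The plan is to prove the cycle of implications $(2) \Rightarrow (1) \Rightarrow (3) \Rightarrow (2)$. The implication $(3) \Rightarrow (2)$ is the classical characterization of line graphs of bipartite graphs (Harary--Holzmann style), which I would invoke without proof, so the real work lies in the first two steps, and both are governed by the same natural colouring coming from the bipartition of the underlying graph.

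For $(2) \Rightarrow (1)$, I would start with $G = L(H)$ for a bipartite graph $H$ with bipartition $(A,B)$, and define the colouring by painting an edge $ef$ of $L(H)$ red if the common endpoint of the $H$-edges $e$ and $f$ lies in $A$, and blue if it lies in $B$. To rule out an induced red $P_3$ on vertices $e,f,g$ of $L(H)$, I would observe that the pairs $e,f$ and $f,g$ must share the unique endpoint of $f$ in $A$, so $e$ and $g$ share it as well and are therefore adjacent in $L(H)$, contradicting the induced condition; the blue case is symmetric. Since $H$ is bipartite and thus triangle-free, every triangle in $L(H)$ arises from a $K_{1,3}$ of $H$, so its three edges share one common endpoint and receive the same colour, which rules out $\RRB$ and $\RBB$.

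For $(1) \Rightarrow (3)$, suppose that $G$ admits a $\{\RR,\BB,\RRB,\RBB\}$-free $2$-edge-colouring. Claw-freeness is immediate: in an induced claw with centre $a$ and leaves $b,c,d$, any two of the edges $ab$, $ac$, $ad$ form an induced $P_3$ (since $b,c,d$ are pairwise non-adjacent), so they must receive different colours, which is impossible with only two colours available. For diamond-freeness, I would use that forbidding $\RRB$ and $\RBB$ forces every triangle to be monochromatic; the two triangles of a diamond share an edge, so all five edges must receive the same colour, and then the $P_3$ through the two non-adjacent vertices of the diamond and one of their common neighbours is induced and monochromatic, contradicting the absence of $\RR$ or $\BB$. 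Finally, in any induced cycle of length at least four, any three consecutive vertices span an induced $P_3$, so consecutive edges along the cycle must alternate colours, forcing its length to be even.

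The main potential pitfall is the diamond argument, where one needs to carefully combine the monochromaticity of each triangle (forced by banning the bichromatic triangles) with the existence of an induced $P_3$ obtained from the two non-adjacent vertices of the diamond together with a common neighbour. Apart from this, the remaining verifications reduce to direct case analysis on three-vertex induced subgraphs of $G$, and the classical implication $(3) \Rightarrow (2)$ closes the cycle.
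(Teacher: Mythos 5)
Your proposal is correct and follows essentially the same route as the paper: the equivalence of (2) and (3) is invoked from the literature, $(1)\Rightarrow(3)$ is handled by showing the claw, the diamond, and odd holes each fail to admit the required colouring, and $(2)\Rightarrow(1)$ uses the same bipartition-induced colouring with the observation that two same-coloured incident edges of $L(H)$ force a monochromatic triangle. All the individual verifications (claw via pigeonhole, diamond via monochromatic triangles sharing an edge, odd holes via colour alternation) are sound.
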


\begin{proof}
  The equivalence between the last two statements was proved in
  \cite{petersonDAM126}. Clearly, no odd-hole nor the claw admit a
  $2$-edge-colouring with no monochromatic $P_3$. It is not hard to observe that
  the diamond does not admit an $\{\RR, \BB, \RRB, \RBB\}$-free
  $2$-edge-colouring. By contrapositive, the previous arguments show that the
  first item implies the last one.

  To conclude the proof we show that if $G$ is the line graph of a bipartite
  graph $H(X,Y)$, then $G$ admits a $\{\RR, \BB, \RRB, \RBB\}$-free
  $2$-edge-colouring. Let $e$ and $f$ be a pair of adjacent vertices in $G$.
  Then, $e$ and $f$ are incident with some common vertex $v$ of $H$. If $v\in
  X$, colour the edge $ef$ blue, and if $x\in Y$, colour $ef$ red. To prove that
  this is a $\{\RR, \BB, \RRB, \RBB\}$-free $2$-edge-colouring of $G$, it
  suffices to show that if $e_1, e_2$, and $e_3$ are three vertices of $G$ such
  that $e_1e_2$ and $e_2e_3$ are edges of $G$, and they are of the same colour,
  then $e_1e_3$ is an edge in $G$ which has the same colour as the other two
  edges, i.e., $e_1,e_2$, and $e_3$ induce a monochromatic triangle. Without
  loss of generality suppose that $e_1e_2$, and $e_2e_3$ are coloured red. By
  the choice of colouring, this means that $e_1$ and $e_2$ are incident with
  some common vertex $y\in Y$, and that $e_2e_3$ are incident with some common
  vertex $y'\in Y$. Notice that if $y\neq y'$, then $e_2$ is incident with two
  different vertices of $Y$, contradicting the fact that $H(X,Y)$ is a bipartite
  graph. Thus, $y = y'$ and so, $e_1$ and $e_3$ are incident with a common
  vertex $y\in Y$. This means that $e_1e_3$ is an edge in $G$ and it is coloured
  red. The claim follows.
\end{proof}

The \textit{incidence graph} of a multigraph $G$ is the bipartite graph $I(G)(E,
V)$ where an edge $e$ is adjacent to a vertex $v$ in $I(G)$ if $e$ is incident
to $v$ in $G$. We say that a bipartite graph $(X,Y)$ is an \textit{incidence
graph} if $(X,Y)$ is the incidence graph of some multigraph. Equivalently, a
graph $G$ is an incidence graph if and only if there is a bipartition $(X,Y)$ of
$G$ such that every vertex in $X$ has degree at most $2$. The butterfly is the
graph consisting of two triangles sharing a single vertex (see
\cref{fig:small-graphs}). For a non-negative integer, the \textit{$n$-butterfly}
is the graph that consists of two triangles joined by a path of length $n$. In
particular, the $0$-butterfly is the butterfly, and the $1$-butterfly is
isomorphic to the graph obtained by adding one edge to $2K_3$. An
\textit{even-butterfly} (resp.\ \textit{odd-butterfly}) is a $2n$-butterfly
(resp.\ $(2n+1)$-butterfly) for $n \ge 0$. 

\begin{theorem}
\label{J3J4J7J8J9}
  The following statements are equivalent for a graph $G$:
  \begin{enumerate}
    \item $G$ admits a $\{\RR, \BB, \BT, \RRB, \RBB\}$-free $2$-edge colouring.
    \item $G$ admits a $\{\RR, \BB, \RT, \RRB, \RBB\}$-free $2$-edge colouring.
    \item $G$ is the line graph of an incidence graph. 
    \item $G$ is a $\{$claw, diamond, odd-hole, even-butterfly$\}$-free.
  \end{enumerate}
\end{theorem}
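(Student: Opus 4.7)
The plan is to establish $(1) \Leftrightarrow (2)$ by colour symmetry, and then to close the chain $(3) \Rightarrow (1) \Rightarrow (4) \Rightarrow (3)$. The equivalence $(1) \Leftrightarrow (2)$ is immediate: swapping the two colours in any $2$-edge-colouring exchanges the forbidden patterns $\BT$ and $\RT$ while leaving the set $\{\RR, \BB, \RRB, \RBB\}$ invariant.

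For $(3) \Rightarrow (1)$, let $H = (A, B)$ be the incidence graph of some multigraph, with the $B$-side having maximum degree at most $2$, so $G = L(H)$. Colour an edge $ef$ of $L(H)$ red if $e$ and $f$ share a vertex in $A$, and blue if they share a vertex in $B$. The canonical-colouring argument from the proof of \cref{prop:J3J4J8J9} shows that this avoids $\{\RR, \BB, \RRB, \RBB\}$; moreover, a blue triangle would correspond to three edges of $H$ meeting at a common vertex of $B$, which is impossible since $B$-vertices have degree at most $2$. Hence the colouring is also $\BT$-free.

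For $(1) \Rightarrow (4)$, \cref{prop:J3J4J8J9} already gives $\{$claw, diamond, odd-hole$\}$-freeness, so it remains to verify that no even-butterfly admits a $\{\RR, \BB, \BT, \RRB, \RBB\}$-free colouring. Both triangles are monochromatic by $\{\RRB, \RBB\}$-freeness, and neither can be blue by $\BT$-freeness, so both are red. Induced $P_3$'s along the interior path force its edges to alternate colours, while the $P_3$ made of a red triangle edge together with the first (resp.\ last) path edge forces both of these extreme path edges to be blue. Since the path length is even, alternation leaves these two edges of opposite colours, a contradiction.

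For $(4) \Rightarrow (3)$, \cref{prop:J3J4J8J9} gives $G = L(H)$ for some bipartite $H = (A, B)$, and the aim is to show $H$ can be chosen as an incidence graph, i.e.\ each connected component has some part of maximum degree at most $2$. Otherwise, some connected component contains vertices $a \in A$ and $b \in B$ of degree at least $3$. Pick a shortest $a$--$b$ path $a = w_0, w_1, \ldots, w_{2k+1} = b$ in $H$. If $k = 0$, the edge $ab$ belongs to two triangles of $L(H)$, one at each endpoint, and adjoining two extra edges at $a$ and two at $b$ produces an induced $0$-butterfly. If $k \geq 1$, the path edges form an induced path of length $2k$ in $L(H)$, which together with two further edges at each of $a$ and $b$ completes an induced $2k$-butterfly, contradicting even-butterfly-freeness.

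The main obstacle is the induced-subgraph verification in $(4) \Rightarrow (3)$: shortest-path minimality is essential to rule out chords between the extra edges selected at $a$ or $b$ and the interior path vertices, as well as between vertices of the two triangles, either of which would destroy the butterfly structure.
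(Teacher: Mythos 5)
Your proposal is correct and follows essentially the same route as the paper: the same cycle of implications, the same two-sided colouring of $L(H)$ for $(3)\Rightarrow(1)$ with the blue-triangle exclusion coming from the degree-$2$ side, and the same extraction of an induced even-butterfly from a shortest path between two high-degree vertices in opposite parts for $(4)\Rightarrow(3)$ (the paper additionally minimizes the distance over all such pairs, which is not actually needed given your observation that shortest-path minimality already forces the extra neighbours off the path). The one point to repair is in $(1)\Rightarrow(4)$: your alternation argument presupposes a non-empty connecting path and therefore does not cover the $0$-butterfly (two red triangles sharing a vertex), which is an even-butterfly by the paper's definition and is exactly what your $k=0$ case of $(4)\Rightarrow(3)$ produces, so it cannot be skipped; there the contradiction is instead that one edge from each red triangle at the shared vertex forms an induced red $P_3$, i.e.\ a copy of $\RR$. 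The paper handles this explicitly by noting that the absence of monochromatic paths already forces $n>0$.
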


\begin{proof}
The first two items are trivially equivalent. From \cref{prop:J3J4J8J9}, it
follows that the first item implies that $G$ is a $\{$claw, diamond,
odd-hole$\}$-free. To see that the first item also implies that $G$ is
even-butterfly free, consider a $\{\RR, \BB, \BT, \RRB, \RBB\}$-free $2$-edge
colouring of an $n$-butterfly $B$. Clearly, both triangles of $B$ must be
coloured blue, and since there are no monochromatic blue paths, $n$ must be
greater than $0$ and both end-edges in the path joining the triangles must be
coloured red. Finally, the induced path between both triangles must alternate
colours, and since the end-edges are red, we conclude that $n$ must be odd.
Thus, the first item implies the fourth one.

Now, we see that the third item implies the first one. Let $G$ be the line graph
of an incidence graph $I$, and let $(X,Y)$ be a bipartition of $I$ where every
vertex in $X$ has degree at most $2$. Colour an edge $ef$ of $G$ blue if $e$ and
$y$ are incident with a common vertex $x$ in $X$; otherwise colour $ef$ red. By
replicating the arguments as in the proof of \cref{prop:J3J4J8J9}, one can
notice that this is a $\{\RR, \BB, \RRB, \RBB\}$-free $2$-edge colouring of $G$.
The fact that it also avoids the monochromatic blue triangle, follows from the
assumption that every vertex $x\in X$ has degree at most $2$.

To conclude the proof, we show that the fourth item implies the third one. By
\cref{prop:J3J4J8J9}, we know that $G$ is the line graph of a bipartite graph
$H(X,Y)$. To show that $G$ is the line graph of an incidence graph it suffices
to show that for every connected component $H'$ of $H$, all vertices of degree
at least $3$ in $H'$ are contained in one of the parts of the bipartition.
Without loss of generality, we assume that $H$ is connected, and anticipating a
contradiction, suppose that  there are two vertices $x\in X$ and $y\in Y$ of
degree at least $3$. Choose $x$ and $y$ to minimize $d(x,y)$ among vertices of
degree at least $3$, and consider the shortest $xy$-path $P$, $P =
u_0,u_1,\dots, u_k $ with $u_0 = x$ and $u_k = y$. Since $d(x), d(y)\ge 3$ and
$P$ is a shortest $xy$-path, we can choose two neighbours $x_1, x_2$ of $x$, and
two neighbours $y_1,y_2$ of $y$ such that neither of $x_1,x_2,y_1,y_2$ belong to
$P$.  Let $H'$ be the subgraph of $H$ defined by the edge set $\{xx_1,xx_2,yy_1,
yy_2,xu_1,\dots, u_ky\}$ so $L(H')$ is an induced subgraph of $G$. It is not
hard to see that $L(H')$ consists of  two triangles $xx_1,xx_2,xu_1$ and
$yy_1,yy_2,yu_{k-1}$ joined by a path of length $k-1$. Since $x\in X$ and $y\in
Y$, then $k$ is odd, and thus $L(H')$ is an even-butterfly, contradicting the
choice of $G$. The claim follows.
\end{proof}


\section{Elementary graphs and forbidden coloured triangles.}
\label{sec:further-elementary}

In \cref{sec:elementary} we observed that line graphs of bipartite graphs and of
incidence graphs are expressible by forbidden $2$-edge-coloured graphs.
Moreover, these classes are expressible by forbidding both monochromatic paths
and some set of coloured triangles. In this section we characterize all graph
classes expressible by such a forbidden set. In \cref{fig:small-graphs-2} we
depict further small graphs used in these characterizations, and in
\cref{fig:landscape} we present a landscape of the classes characterized in this
and in the previous section.

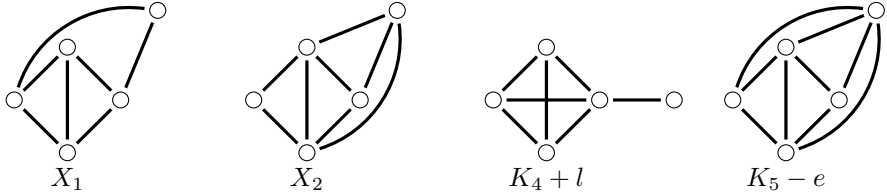
\begin{figure}[ht!]
\centering
\begin{tikzpicture}[scale = 0.7]

    \begin{scope}
                \node (L1) at (0,-1.5) {$X_1$};
        \foreach \i in {0,1,2,3}
            \node (\i) [vertex] at ({(360/4)*\i + 90}:1){};
        \foreach \i in {0,1,2,3}
            \draw [edge] let
                    \n1 = {int(mod(\i+1,4))}
                in
                    (\i) to (\n1);
        \draw [edge] (0) to (2);
        \node (n) [vertex] at (45:2.4){};
        \draw [edge] (3) to (n);
        \draw [edge] (1) to [bend left = 40] (n);
    \end{scope}

    \begin{scope}[xshift = 4.5cm]
        \node (L1) at (0,-1.5) {$X_2$};
        \foreach \i in {0,1,2,3}
            \node (\i) [vertex] at ({(360/4)*\i + 90}:1){};
        \foreach \i in {0,1,2,3}
            \draw [edge] let
                    \n1 = {int(mod(\i+1,4))}
                in
                    (\i) to (\n1);
        \draw [edge] (0) to (2);
        \node (n) [vertex] at (45:2.4){};
        \draw [edge] (2) to [bend right = 40] (n);
        \draw [edge] (0) to (n);
        \draw [edge] (3) to (n);
    \end{scope}

    \begin{scope}[ xshift = 9cm]
        \node (L1) at (0,-1.5) {$K_4+l$};
        \foreach \i in {0,1,2,3}
            \node (\i) [vertex] at ({(360/4)*\i + 90}:1){};
        \foreach \i in {0,1,2,3}
            \draw [edge] let
                    \n1 = {int(mod(\i+1,4))}
                in
                    (\i) to (\n1);
        \draw [edge] (0) to (2);
        \draw [edge] (3) to (1);
        \node (n) [vertex] at (0:2.4){};
        \draw [edge] (3) to (n);
    \end{scope}

    \begin{scope}[xshift = 13.5cm]
        \node (L1) at (0,-1.5) {$K_5-e$};
        \foreach \i in {0,1,2,3}
            \node (\i) [vertex] at ({(360/4)*\i + 90}:1){};
        \foreach \i in {0,1,2,3}
            \draw [edge] let
                    \n1 = {int(mod(\i+1,4))}
                in
                    (\i) to (\n1);
        \draw [edge] (0) to (2);
        \node (n) [vertex] at (45:2.4){};
        \draw [edge] (2) to [bend right = 40] (n);
        \draw [edge] (0) to (n);
        \draw [edge] (3) to (n);
        \draw [edge] (1) to [bend left = 40] (n);
    \end{scope}

\end{tikzpicture}
\caption{Connected graphs on 5 vertices containing the diamond but avoiding the
remaining celebrities from the world of small graphs (see
\cref{fig:small-graphs}).}
\label{fig:small-graphs-2}
\end{figure}

Up to colour symmetry, there are two possible classes expressed by forbidding
three coloured triangles and both monochromatic paths. One is characterized in
Theorem~\ref{J3J4J7J8J9}, we now characterize the other one.

\begin{proposition}\label{prop:BB-RR-BT-RBB}
    The following statements are equivalent for a graph $G$.
    \begin{enumerate}
        \item $G$ admits a $\{\BB,\RR,\BT,\RBB\}$-free $2$-edge-colouring, 
        \item $G$ admits a $\{\BB,\RR,\RT,\RRB\}$-free $2$-edge-colouring, 
        \item each connected component of $G$ is either a line graph of an
            incidence graph or a diamond, and
        \item $G$ is a $\{$claw, $4$-wheel, gem, kite, $X_1,X_2, K_5-e$, 
            odd-hole, even-butterfly$\}$-free graph.
    \end{enumerate}
\end{proposition}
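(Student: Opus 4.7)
The plan is to establish the four-way equivalence by the chain $(1)\Leftrightarrow(2)$, $(3)\Rightarrow(1)$, $(1)\Rightarrow(4)$, $(4)\Rightarrow(3)$. The first equivalence is immediate: swapping the roles of red and blue sends $\{\BB,\RR,\BT,\RBB\}$ to $\{\RR,\BB,\RT,\RRB\}$, and a $2$-edge-colouring avoids one set if and only if its swap avoids the other.

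For $(3)\Rightarrow(1)$, valid colourings can be chosen independently on the connected components, so it suffices to colour each allowed component type. Line graphs of incidence graphs admit an even stronger $\{\RR,\BB,\BT,\RRB,\RBB\}$-free colouring by \cref{J3J4J7J8J9}; and for the diamond on $\{a,b,c,d\}$ with $a,b$ the degree-$3$ vertices and $c,d$ the degree-$2$ vertices, the colouring $ab=ac=bd=r$, $ad=bc=b$ is checked to avoid all four forbidden patterns (both triangles are $\RRB$, and neither $c\text{-}a\text{-}d$ nor $c\text{-}b\text{-}d$ is monochromatic).

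For $(1)\Rightarrow(4)$, I verify that each listed graph fails to admit a $\{\BB,\RR,\BT,\RBB\}$-free colouring. The claw and odd holes fail by the usual alternation argument. For an even-butterfly, forbidding $\BT$ and $\RBB$ forces each of its two triangles to carry a red edge incident to its bridge vertex; the $\RR$-ban then pins the connecting path to alternate starting with blue, and parity of an even-length path produces a red $P_3$ meeting the second triangle. The $4$-wheel, gem, kite, $X_1$, and $X_2$ are handled by short direct arguments on the scarce degrees of freedom left by the $P_3$- and triangle-constraints; the $K_5-e$ case is then free since $X_2$ is an induced subgraph of $K_5-e$ and our property is hereditary.

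The heart of the proof is $(4)\Rightarrow(3)$. Let $H$ be a connected component of $G$. If $H$ is diamond-free then $H$ is $\{$claw, diamond, odd-hole, even-butterfly$\}$-free, so \cref{J3J4J7J8J9} identifies $H$ as the line graph of an incidence graph. Otherwise $H$ contains an induced diamond $D$ on $\{a,b,c,d\}$ with $a,b$ the degree-$3$ vertices and $c,d$ the degree-$2$ vertices; I claim no vertex $v\in V(H)\setminus V(D)$ is adjacent to any vertex of $D$, whence connectivity forces $H=D$. I establish the claim by enumerating $N(v)\cap V(D)$: a single degree-$3$ neighbour yields an induced claw on $\{a,v,c,d\}$, a single degree-$2$ neighbour yields the kite, the pair $\{a,b\}$ yields again a claw on $\{a,v,c,d\}$, the pair $\{c,d\}$ yields $X_1$, any mixed pair yields the gem (with the chosen degree-$3$ diamond vertex as apex), a triple containing $\{a,b\}$ yields $X_2$, a triple containing $\{c,d\}$ yields the $4$-wheel, and adjacency to all four diamond vertices yields $K_5-e$. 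In each case a forbidden induced subgraph appears, contradicting (4). The main obstacle is precisely this case analysis: each branch requires an explicit isomorphism with the correct $5$-vertex obstruction (degree sequences alone do not always suffice to distinguish the obstructions), and ensuring the enumeration of attachment patterns is exhaustive requires careful bookkeeping.
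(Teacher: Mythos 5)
Your overall architecture matches the paper's: $(1)\Leftrightarrow(2)$ by colour swap, $(3)\Rightarrow(1)$ by colouring each component (your explicit diamond colouring is exactly the paper's ``alternate the $4$-cycle, chord red''), $(1)\Rightarrow(4)$ by showing each obstruction admits no valid colouring, and $(4)\Rightarrow(3)$ via the dichotomy on whether a component contains an induced diamond. Your case analysis for $(4)\Rightarrow(3)$ is in fact more careful than the paper's one-sentence justification, and each attachment pattern of an outside vertex to the diamond is correctly matched to its obstruction; your even-butterfly argument is also sound (though for the $0$-butterfly the ``connecting path'' is empty and the two red edges at the shared vertex already give an induced $\RR$).

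There is, however, one step that fails: you dispose of $K_5-e$ by claiming that $X_2$ is an induced subgraph of $K_5-e$ and invoking heredity. This is false. Both graphs have five vertices, so the only five-vertex induced subgraph of $K_5-e$ is $K_5-e$ itself, and $X_2$ has $8$ edges while $K_5-e$ has $9$; moreover the four-vertex induced subgraphs of $K_5-e$ are only $K_4$ and the diamond, so $K_5-e$ contains \emph{none} of the listed obstructions as a proper induced subgraph and genuinely requires its own argument. The fix is short and is essentially the one the paper gives in the proof of \cref{prop:BB-RR-BT-RT}: with $u_1,u_2$ the non-adjacent pair, each induced path $u_1u_iu_2$ ($i\in\{3,4,5\}$) must be bichromatic, so by pigeonhole two edges $u_iu_1,u_ju_1$ share a colour and then $u_iu_2,u_ju_2$ share the other; whichever of the triangles $u_1u_iu_j$, $u_2u_iu_j$ has two blue edges is a copy of $\BT$ or $\RBB$ regardless of the colour of $u_iu_j$. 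With that repair (and with the deferred ``short direct arguments'' for the $4$-wheel, gem, kite, $X_1$, $X_2$ actually written out, as the paper does for the $4$-wheel and the kite), the proof is complete and follows the paper's route.
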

\begin{proof}
    Conditions (1) and (2) are equivalent. It is also easy to observe that (3)
    implies (1) as either $G$ is a line graph of an incidence graph and then
    \cref{J3J4J7J8J9} can be applied. Or $G$ is a diamond, which has a
    $\{\BB,\RR,\BT,\RBB\}$-free $2$-edge-colouring: colour the cycle
    alternatively with red and blue, and finally the chord with red.

    Let us show that (1) implies (4): Suppose $G$ admits a
    $\{\BB,\RR,\BT,\RBB\}$-free $2$-edge-colouring. Then by
    \cref{thm:elementary-graphs}, $G$ does not contain claw and odd-holes. It
    does not contain a $4$-wheel: two of the internal edges of $W_4$ are
    coloured blue in any $\{\BB,\RR\}$-free colouring, implying existence of a
    triangle in the graph with two edges coloured blue. Hence any
    $\{\BB,\RR\}$-free $2$-edge-colouring of $W_4$ is not  $\{\BT,\RBB\}$-free.
    In case of the gem graph, again, avoiding $\BB$ and $\RR$ quickly implies
    that one of the induced triangles has two edges blue. Arguments for
    $\textnormal{kite},X_1, X_2, K_5-e$ are very similar and straightforward,
    let us show it for the kite. In this case, the edges of the diamond incident
    to the vertex adjacent to the one leaf have to be coloured by the same
    colour to avoid $\BB$ and $\RR$ (the opposite colour of the leaf), and
    similarly, the remaining two edges of the (unique) $4$-cycle of the kite
    must also be coloured with the same colour (the same colour as the leaf).
    Hence, any $\{\BB,\RR\}$-free $2$-edge-colouring of the kite induces a
    triangle with two blue edges. Finally, let us also observe that $G$ does not
    contain an induced even-butterfly because any $\{\BB,\RR\}$-free edge
    colouring of an even butterfly forces one end-triangle to have two
    blue-coloured edges, and the other one to have two red-coloured edges.
    Hence, any such $2$-edge-colouring of $G$ contains either $\BT$ or $\RBB$.

    It remains to prove that (4) implies (3): All of the graphs in (4) are
    connected and therefore we can focus on connected graphs only. We
    distinguish two cases. Either $G$ contains a diamond or not. In the former
    case, since claws, and $\textnormal{kite}, X_1, X_2,$ and $K_5-e$ are
    forbidden, it follows that all 5-vertex (connected) graphs containing
    diamonds are forbidden as well, and so $G$ has to be the diamond itself. In
    the latter case, $G$ is a $\{$claw, diamond, odd-hole,
    even-butterfly$\}$-free and hence, by \cref{J3J4J7J8J9}, $G$ is the line
    graph of an incidence graph.
\end{proof}

In \cref{sec:elementary} we observed that line graphs of bipartite graphs and of
incidence graphs are expressible by forbidden $2$-edge-coloured graphs.
Theorem~\ref{thm:elementary-graphs} asserts that augmentations of line graphs of
bipartite graphs are also expressible by forbidden $2$-edge-coloured graphs.
Here, we consider a variation of augmentations of line graphs of bipartite
graphs and of incidence graphs, and we show that these graph classes are again
expressible by forbidden $2$-edge-coloured graphs. The following lemmas build up
to these characterizations.

Let $G$ be the line graph of a bipartite multigraph $H(X,Y)$, and $G'$ an
augmentation of $G$ along a matching of flat edges $e_1f_1,\dots, e_kf_k$. We
say that $G'$ is an \textit{$X$-augmentation} (resp.\ \emph{$Y$-augmentation})
if for each $i\in\{1,\dots,k\}$ there is a vertex $x_i\in X$ (resp.\ $y_i\in Y$)
such that $e_i$ and $f_i$ are incident to $x_i$ (resp.\ to $y_i$). We say that
$G'$ is a \emph{skew-augmentation} if it is either an $X$-augmentation or a
$Y$-augmentation.

Let $G'$ be an augmentation of an elementary graph $G$ along a matching
$x_1y_1,\dots, x_ky_k$. If $(B,R)$ is an elementary colouring of $G$, we say
that $G'$ is a \textit{red-augmentation} (resp.\ \textit{blue-augmentation})
with respect to $(B,R)$ if all edges $x_1y_1,\dots, x_ky_k$ are coloured red
(resp.\ blue).

\begin{lemma}
\label{lem:red-augmentations}
  Let $G$ be an elementary graph with an elementary colouring $(B,R)$. The
  following statements hold for a red-augmentation $G'$ of $G$ with respect to
  $(B,R)$:
  \begin{enumerate}
    \item If $(B,R)$ is a $\RT$-free elementary colouring of $G$, then $G'$
      admits a $\RT$-free elementary colouring.

    \item If $(B,R)$ is a $\RBB$-free elementary colouring of $G$, then $G'$
      admits a  $\RBB$-free elementary colouring. \qed
  \end{enumerate}
\end{lemma}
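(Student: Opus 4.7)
The plan is to construct an explicit extension of $(B,R)$ to $G'$ and verify the required properties by case analysis. By a straightforward induction on the number of augmented edges in the matching along which $G'$ is obtained from $G$, it suffices to handle the case of a single augmented flat edge $xy$ with augment $(X,Y)$. The key preliminary observation is that, since $xy$ is red and flat and $(B,R)$ is $\{\RR,\BB\}$-free, every edge from $x$ to a vertex $u \in N(x)-y$ must be blue: otherwise $y$-$x$-$u$ would be an induced red $P_3$ (induced because $y \notin N(u)$ by flatness). Symmetrically, every edge from $y$ to its remaining neighbourhood $N(y)-x$ is blue.

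We extend $(B,R)$ to $G'$ as follows: every edge of $G-\{x,y\}$ keeps its colour; each edge inside the cliques $X$ and $Y$ is coloured blue; each $XY$-edge of the augment is coloured red; and each new edge between $X$ and $N(x)-y$, or between $Y$ and $N(y)-x$, is coloured blue. To see that this colouring is $\{\RR,\BB\}$-free, one enumerates the induced $P_3$s of $G'$ according to how their vertices distribute among $X$, $Y$, $N(x)-y$, $N(y)-x$, and the remainder of $G$. In each case either the two edges of the $P_3$ receive different colours by construction, or one can reconstruct an induced monochromatic $P_3$ of $G$ through $x$ or $y$ (using that $X$ is fully joined to $N(x)-y$, that $Y$ is fully joined to $N(y)-x$, and the preliminary observation), contradicting the elementarity of $(B,R)$.

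For part (1), the only red edges of $G'$ are the original red edges of $G-\{x,y\}$ and the $XY$-edges of the augment. A red triangle cannot lie inside $G-\{x,y\}$ by hypothesis, cannot consist of three $XY$-edges (three vertices cannot be pairwise joined by edges of a bipartite subgraph), and every triangle involving an augmentation edge between $X$ and $N(x)-y$ (or between $Y$ and $N(y)-x$) contains at least one blue edge. For part (2), a $\RBB$-triangle would have two blue and one red edge. The one delicate case is a triangle $x'bc$ with $x' \in X$ and $b,c \in N(x)-y$ adjacent in $G$: the edges $x'b, x'c$ are blue by construction, so this triangle is $\RBB$ exactly when $bc$ is red in $G$, in which case the original triangle $xbc$ would already be $\RBB$ in $(B,R)$ (since $xb, xc$ are blue by the preliminary observation), contradicting the hypothesis. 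All remaining triangles of $G'$ either lie inside $G-\{x,y\}$, are entirely blue (a $\BT$), or pair an $XY$-edge with another $XY$-edge or a clique edge, yielding $\RRB$ rather than $\RBB$.

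The main obstacle is the case analysis: the construction is essentially forced by the preliminary observation, but one must carefully enumerate all possible induced $P_3$s and triangles of $G'$ according to where their vertices lie among $X$, $Y$, $N(x)-y$, $N(y)-x$ and the rest of $V(G)$, and verify in each case that no monochromatic $P_3$, red triangle, or $\RBB$-triangle is introduced.
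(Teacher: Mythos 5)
Your proposal is correct and follows essentially the same route as the paper's proof: the identical extension of $(B,R)$ (blue inside the cliques $X$, $Y$ and towards $N(x)-y$, $N(y)-x$; red on the $XY$-edges), the same preliminary observation that flatness of the red edge $xy$ forces all edges from $x$ and $y$ to their remaining neighbourhoods to be blue, and the same key point for part (2) that a red edge between two vertices of $N(x)-y$ would already yield a copy of $\RBB$ through $x$ in $G$. Your write-up is merely more explicit about the case enumeration, which the paper compresses into a ``clearly''.
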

\begin{proof}
Let $G$ be an elementary graph with an elementary colouring $(B,R)$. Consider an
augmentation $G'$ of $G$ along an edge $xy \in E(G)$ by a co-bipartite graph
$(X,Y)$. Suppose that $xy$ is coloured red, i.e., $xy \in R$. Since $xy$ is a
flat edge, for every $x'\in N(x)$, the vertices $x',x,y$ induce a $P_3$. Thus
$xx'$ is coloured blue and analogously, every edge $yy'$ is coloured blue for
$y'\in N(y)-x$. Now, consider the following extension of $(B,R)$ to the edges of
$G'$: colour blue all edges inside the cliques $X$ and $Y$, all edges of the
form $ux'$ for $u \in X$ and $x'\in N(x)-y$, and all edges $vy'$ for $v \in Y$
and $y' \in N (y)-x$; colour red all $XY$-edges. Clearly, this yields a
$\{\RR,\BB\}$-free $2$-edge-colouring of $G'$. Also, notice that if $xy$ is
coloured red, the colouring defined for $G'$ does not create any new red
triangles. Moreover, if $(B,R)$ is a $\RBB$-free edge-colouring of $G$, then
every edge $uv$ with $u,v\in N(x)-y$  or $u,v\in N(y)-x$ must be coloured blue.
It follows that the previous colouring does not create any copy of $\RBB$. 
\end{proof}

\begin{lemma}
\label{lem:bipartite-multigraphs}
If $G$ is a skew-augmentation of line graph of a bipartite multigraph, then $G$
admits a $\{\RR, \BB, \RBB\}$-free $2$-edge-colouring.
\end{lemma}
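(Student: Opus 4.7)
By colour symmetry, I would treat only the case where $G$ is an $X$-augmentation of $L(H)$ for some bipartite multigraph $H$ with bipartition $(X,Y)$; the $Y$-augmentation case follows by swapping the roles of red and blue throughout. Let $M=\{e_1f_1,\dots,e_kf_k\}$ be the augmenting matching of flat edges and let $x_i\in X$ be the common endpoint of $e_i$ and $f_i$ in $H$.

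The plan is to exhibit a $\{\RR,\BB,\RBB\}$-free colouring $(B,R)$ of the elementary graph $L(H)$ in which every edge of $M$ lies in $R$, and then invoke \cref{lem:red-augmentations}(2) to extend $(B,R)$ to a $\{\RR,\BB,\RBB\}$-free colouring of $G$. First, I would record a structural consequence of flatness: the flatness of each $e_if_i$ in $L(H)$ forces $x_i$ to have degree exactly $2$ in $H$ (otherwise a third edge at $x_i$ would form a triangle in $L(H)$ with $e_i$ and $f_i$), and $e_i,f_i$ to be non-parallel in $H$ (otherwise any third edge at their shared $Y$-endpoint would likewise give a triangle). Hence $e_i$ and $f_i$ share precisely the $X$-endpoint $x_i$ and no $Y$-endpoint.

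I would then colour $L(H)$ by placing $ef$ in $B$ whenever $e$ and $f$ share a $Y$-endpoint in $H$ (so any parallel pair is blue), and in $R$ otherwise. Verifying that this is $\{\RR,\BB,\RBB\}$-free reduces to short case analyses, each exploiting that every edge of a bipartite (multi)graph has a unique endpoint in each part. For a monochromatic induced $P_3$ on $e,f,g$, the endpoints shared by $ef$ and by $fg$ both belong to $f$ and to the same side of the bipartition, so they coincide, forcing $eg\in E(L(H))$ and contradicting inducedness. For a would-be $\RBB$ triangle $efg$ with $ef\in R$ and $eg,fg\in B$, the two blue edges pin the unique $Y$-endpoint of $g$ inside both $e$ and $f$, so $e$ and $f$ share a $Y$-endpoint, contradicting $ef\in R$. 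By the structural observation, each $e_if_i$ is then coloured red, so $M\subseteq R$.

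To conclude, I would iterate \cref{lem:red-augmentations}(2) along the matching $M$; the iteration is well-defined because the augmentation is independent of the order in which the flat edges are processed~\cite{maffrayJCTB75}. The step I expect to be the main obstacle is the correct treatment of parallel edges of $H$ in the initial colouring: the naive rule ``red iff the shared endpoint lies in $X$'' produces an $\RBB$-triangle whenever two parallel edges of $H$ meet a third edge only through their common $Y$-endpoint, and lumping parallel pairs into the blue class is precisely what avoids this pitfall.
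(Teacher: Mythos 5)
Your colouring of $L(H)$ (blue iff the two edges of $H$ share a $Y$-endpoint, so parallel pairs are blue) and the plan of making the matching red and then invoking \cref{lem:red-augmentations}(2) are exactly the paper's argument, and your verification that the colouring is $\{\RR,\BB,\RBB\}$-free is sound. However, there is a concrete false claim in your structural step: flatness of $e_if_i$ in $L(H)$ does \emph{not} force $e_i$ and $f_i$ to be non-parallel. If $e_i$ and $f_i$ are a pair of parallel edges whose two endpoints have no other incident edges, then $e_if_i$ is an isolated edge of $L(H)$, hence flat, and it may perfectly well belong to the augmenting matching. Under your colouring such an edge is blue (it shares a $Y$-endpoint), so $M\subseteq R$ fails and \cref{lem:red-augmentations}(2) cannot be applied. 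The paper closes exactly this hole by observing that a flat edge of $L(H)$ is either a non-parallel pair or an \emph{isolated} parallel pair, and then recolouring all isolated edges of $L(H)$ red --- a harmless modification since isolated edges lie in no $P_3$ or triangle. Your proof needs this (or an equivalent) patch.

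A second, smaller issue is the opening reduction: you justify restricting to $X$-augmentations ``by colour symmetry, swapping the roles of red and blue''. The target set $\{\RR,\BB,\RBB\}$ is not colour-symmetric --- swapping colours in a $\{\RR,\BB,\RBB\}$-free colouring yields a $\{\RR,\BB,\RRB\}$-free one, which is not what the lemma asserts. The reduction is still valid, but for a different reason: a $Y$-augmentation of $L(H(X,Y))$ is an $X'$-augmentation of $L(H(X',Y'))$ after relabelling the two sides of the bipartition, which changes nothing about the graph or the colours. State it that way and the rest of your argument (with the isolated-parallel-pair fix) goes through.
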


\begin{proof}
We first consider the case when $G$ is the line graph of a bipartite multigraph
$H(X,Y)$. Let $e$ and $f$ be a pair of adjacent vertices in $G$. If there is a
vertex $y\in Y$ such that $e$ and $f$ are incident to $y$ (in $H$), then colour
$ef$  blue; otherwise colour $ef$ red. Hence, if a pair of edges $ef, fg$ of $G$
are coloured blue, then there is a pair of vertices $y,y'\in Y$ such that $e$
and $f$ are incident to $y$ (in $H$), and $f$ and $g$ are incident to $y'$ (in
$H$). Since $f$ is incident to $y$ and to $y'$, and $H$ is bipartite, it follows
that $y = y'$, and so, the three edges $e,f,g$  are incident to a common vertex
$y\in Y$. This argument implies that whenever three vertices $e, f, g$ of $G$
induce at least two blue edges, then these vertices induce a monochromatic blue
triangle in $G$. In particular, this colouring of $G$ induces no blue path on
three vertices, nor a copy of $\RBB$. By symmetry, one can notice that whenever
three vertices $e,f,g$ of $G$ induce at least two red edges, these vertices
induce a triangle (but in this case, this triangle might be a monochromatic red
triangle, or a copy of $\RRB$). Thus, for every bipartite multigraph $H(X,Y)$,
there is a $\RBB$-free elementary colouring of its line graph $G$ such that
every edge $gf\in G$ is coloured red whenever $g$ and $f$ are incident to a
common neighbour $x\in X$, and $g$ and $f$ are not parallel edges.

Consider a skew-augmentation $G'$ of $G$ along flat edges $e_1f_1,\dots,e_kf_k$.
Notice that if $ef$ is a flat edge of $G$ (i.e., $ef$ does not belong to any
triangle of $G$), then either $e$ and $f$ are not parallel edges in $H$, or $e$
and $f$ are a pair of isolated parallel edges in $H$ (and in this case, $ef$ is
an isolated edge in $G$). Consider again the previously defined colouring of $G$
with the following modification: colour all isolated edges $ef$ of $G$ red (and
let all the other edges keep their original colour). Since we only modified the
colour of isolated edges, the redefined colouring is still a $\RBB$-free
elementary colouring. Moreover, the following property holds for every flat edge
$ef$ of $G$: either $ef$ is an isolated edge, and thus is coloured red; or there
is a vertex $x\in X$ incident to both $e$ and $f$, and thus also coloured red;
or $e$ and $f$ are incident to a common vertex $y\in Y$ but not to a common
vertex in $X$. Therefore, from the choice of $e_1f_1,\dots, e_kf_k$ and the
definition of skew-augmentation, each edge $e_if_i$ is coloured red. Hence, by
considering the red-augmentation of this redefined colouring, we find a
$\RBB$-free elementary colouring of $G'$ (by part 2 of
\cref{lem:red-augmentations}).
\end{proof}

\begin{lemma}\label{lem:blue-bip} If a graph $G$ admits a $\{\BB,\RR,\BT\}$-free
    $2$-edge-colouring, then $G$ admits a $\{\BB,\RR,\BT\}$-free
    $2$-edge-colouring with no blue odd cycles.
\end{lemma}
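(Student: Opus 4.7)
Plan: I would prove this by induction on the number of blue edges in the colouring. If the blue subgraph $G_B$ already has no odd cycles, we are done; otherwise, fix a shortest odd cycle $C = v_1v_2\cdots v_{2k+1}v_1$ in $G_B$. Since $\BT$-freeness implies $G_B$ is triangle-free, $2k+1 \ge 5$. The new colouring will be obtained by recolouring every edge of $G[V(C)]$ red; this strictly decreases the number of blue edges, so the induction terminates.

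The heart of the argument lies in two structural facts. First, I would show that $G[V(C)] \cong K_{2k+1}$, with the cycle edges of $C$ blue and every other edge red. Applying $\BB$-freeness to $v_iv_{i+1}v_{i+2}$ together with $\BT$-freeness forces each short chord $v_iv_{i+2}$ to be a red edge. The minimality of $C$ then prevents any blue chord, since a blue chord at index distance $d \in \{2, \dots, 2k-1\}$ would split $C$ into cycles of lengths $d+1$ and $2k+2-d$, one of each parity, producing a strictly shorter blue odd cycle --- or, for $d=2$, a blue triangle forbidden by $\BT$-freeness. Iterating $\RR$-freeness on pairs of consecutive red short chords $v_iv_{i+2}, v_{i+2}v_{i+4}$ yields $v_iv_{i+4} \in E(G)$, which is red by the preceding sentence. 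Pursuing this iteration forwards from $v_i$ in steps of $+2$ until it would hit a cycle edge, and similarly backwards, yields a red edge from $v_i$ to every vertex of $V(C)$ apart from $v_i$ itself and its two cyclic neighbours.

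Second, I would show that $V(C)$ is \emph{externally uniform} in $G$, in the sense that every $w \notin V(C)$ is adjacent either to no vertex of $V(C)$ or to every vertex of $V(C)$. If some $wv_i$ is red, then $\RR$-freeness applied to the red $P_3$ $wv_iv_{i+2}$ forces $wv_{i+2} \in E(G)$. When $wv_{i+2}$ is red we continue the propagation from there; when it is blue, $\BB$- and $\BT$-freeness on $wv_{i+2}v_{i+3}$ force $wv_{i+3}$ to be a red edge, and we continue. In either case we advance around $C$ by $+2$ or $+3$ and, since $\gcd(2,3)=1$ and $C$ has odd length, the propagation eventually reaches every vertex of $V(C)$. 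If instead $wv_i$ is blue, then $\BB$- and $\BT$-freeness on $wv_iv_{i+1}$ force $wv_{i+1}$ to be red, reducing to the previous case.

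With these two facts in hand, verifying that recolouring every edge of $G[V(C)]$ red preserves $\{\BB, \RR, \BT\}$-freeness is routine: the $\BB$ and $\BT$ conditions are preserved because the set of blue edges only shrinks, and for $\RR$-freeness each new red edge $v_iv_{i+1}$ requires every red neighbour of $v_i$ other than $v_{i+1}$ to also lie in $N_G(v_{i+1})$, which holds inside $V(C)$ by the first structural fact and outside $V(C)$ by the second. I expect the main technical obstacle to be proving these two structural facts --- especially the external uniformity of $V(C)$ --- whose proofs rely on the odd length of $C$ to drive the propagation of adjacency around the whole cycle.
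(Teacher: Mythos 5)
Your overall strategy coincides with the paper's: take a smallest blue odd cycle $C$, show that $V(C)$ induces a clique all of whose chords are red, recolour the cycle edges red, check $\{\BB,\RR,\BT\}$-freeness, and iterate until no blue odd cycle remains. Your first structural fact and its proof (short chords forced to be red edges by $\BB$- and $\BT$-freeness, no blue chords by minimality and parity, longer chords obtained by propagating $\RR$-freeness along distance-two chords) are correct, and indeed more detailed than the paper's one-line justification of the same claim.

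There is, however, a genuine gap in your second structural fact. The assertion that the $+2/+3$ propagation ``eventually reaches every vertex of $V(C)$'' because $\gcd(2,3)=1$ and $C$ has odd length is not valid: the step taken at each stage ($+2$ if $wv_{p+2}$ is red, $+3$ if it is blue) is dictated by the fixed colouring, not chosen by you, so the orbit of the propagation is an eventually periodic subset of $V(C)$ that need not be everything. For instance, on a $5$-cycle the forced orbit starting at $v_0$ can be $\{v_0,v_3\}$ (steps $+3,+2,+3,+2,\dots$), which never certifies adjacency of $w$ to $v_1$ or $v_4$ --- and $wv_{i+1}\in E(G)$ is precisely the adjacency you need in the final verification to rule out a new induced red $P_3$ on $w,v_i,v_{i+1}$. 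The adjacency you need is true, but it requires a targeted argument using chords of length other than two, which is what the paper does: from a red edge $wv_i$, the red chord $v_iv_{i-2}$ forces $wv_{i-2}\in E(G)$; if $wv_{i-2}$ is red, the red chord $v_{i-2}v_{i+1}$ forces $wv_{i+1}\in E(G)$, while if $wv_{i-2}$ is blue, the blue cycle edge $v_{i-2}v_{i-1}$ forces $wv_{i-1}$ to be a red edge and then the red chord $v_{i-1}v_{i+1}$ forces $wv_{i+1}\in E(G)$ (all indices modulo $|V(C)|$; the relevant pairs are at cyclic distance at least $2$ since $|V(C)|\ge 5$). Substituting this two-step argument, together with its mirror image showing that $wv_{i+1}$ red forces $wv_i\in E(G)$, for your global ``external uniformity'' claim makes the proof go through; the stronger uniformity statement is not needed and is not what the paper proves.
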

\begin{proof}
    Let $R$ be the set of red edges and $B$ the set of blue edges in a
    $\{\BB,\RR,\allowbreak\BT\}$-free $2$-edge-colouring of $G$. Let $c_0,\dots,
    c_n$ be consecutively the vertices of a smallest blue odd cycle $C$ in such
    a colouring (so $n\ge 4$). Since this colouring has no induced monochromatic
    paths and $C$ is a smallest blue odd cycle, for every two vertices of $C$ at
    distance at least 2 from each other, there is an edge in $R$ between them.
    We claim that the colouring of $G$ where the set of red edges is $R' :=
    R\cup E(C)$ and the set of blue edges is $B':=B\setminus E(C)$, i.e., the
    recolouring obtained by changing the colour of the edges of the cycle $C$,
    is a $\{\BB,\RR,\BT\}$-free $2$-edge-colouring of $G$. This colouring is
    clear $\{\BB,\BT\}$-free, and to show that it is $\RR$-free it suffice to
    show that no red edge of the cycle $C$ belongs to a copy of a monochromatic
    path on three vertices. We show that $c_0c_1$ does not belong to such a
    monochromatic path. Let $v\in V(G)$ be a vertex such that $vc_0\in R'$. If
    $v\in V(C)$, then $v,c_0,c_1$ induce a triangle. Now suppose that $v\in
    V(G)\setminus V(C)$, so $vc_0\in R$, i.e., $vc_0$ is a red edge in the
    original colouring of $G$. Also, as noted above, $c_0c_{n-1}\in R$, hence
    there is an edge $vc_{n-1}\in E(G)$. If $vc_{n-1}\in R$, then, using the
    fact that $c_{n-1}c_1\in R$ we conclude that $vc_1\in E(G)$, and so
    $v,c_0,c_1$ do not induce a monochromatic path in the new colouring of $G$.
    Otherwise, $vc_{n-1}\in B$, and in this case the path $v,c_{n-1},c_n$ is a
    blue path on the original colouring of $G$, hence there is an edge $vc_n\in
    E(G)$, and since there are no blue triangles, $vc_n\in R$. It follows from
    the arguments above that $c_nc_1\in R$, so we again conclude that $vc_1$ is
    an edge of $G$ (because $v,c_n,c_1$ is a red path in the original colouring
    of $G$. This proves that there is no vertex $v$ such that $v,c_0,c_1$ induce
    a red path in the new colouring of $G$. Using symmetric arguments we see
    that there is no vertex $v$ such that $v,c_1,c_0$ induce a red path, and so
    $c_1c_0$ does not belong to an induced red path. Therefore, the new
    colouring of $G$ is a $\{\BB,\RR,\BT\}$-free $2$-edge-colouring. Finally,
    after iteratively recolouring all smallest blue odd cycles, we arrive at a
    $\{\BB,\RR,\BT\}$-free $2$-edge-colouring of $G$ with no blue odd cycles.
\end{proof}

\begin{lemma}\label{lem:RR-BB-BT->X-aug} If a graph $G$ admits a $\{\RR, \BB,
\BT\}$-free $2$-edge-colouring, then $G$ is an $X$-augmentation of the line
graph of an incidence graph.
\end{lemma}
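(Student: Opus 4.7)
The plan is to combine \cref{lem:blue-bip} with the Maffray--Reed structural characterization of elementary graphs (\cref{thm:elementary-graphs}), and then refine the resulting augmentation by using the $\BT$-free constraint. First, by \cref{lem:blue-bip}, we may assume the given $\{\RR,\BB,\BT\}$-free colouring of $G$ has no blue odd cycles, so that the blue subgraph is bipartite. Since the colouring is in particular $\{\RR,\BB\}$-free, $G$ is elementary and, by \cref{thm:elementary-graphs}, is an augmentation of $L(H)$ for some bipartite multigraph $H(X,Y)$ along a matching of flat edges of $L(H)$ with co-bipartite augments.

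Next, I show that $H$ can be chosen to be an incidence graph. Consider the canonical colouring of $L(H)$ in which two $H$-edges are coloured blue iff they share an $X$-endpoint and red iff they share a $Y$-endpoint. This is $\{\RR,\BB\}$-free, and on the non-augmented portion of $G$ it agrees with the given colouring up to flipping the colours on each connected component of $L(H)$. If some $X$-vertex had degree $\geq 3$ in $H$, then three of its incident $H$-edges would form a blue $K_3$ in $L(H)$; none of these three edges is flat (each lies in the triangle at that vertex), so none is augmented, and the blue $K_3$ persists in $G$, contradicting $\BT$-freeness. (A flipping argument rules out the symmetric case of a component whose other side also has a vertex of degree $\geq 3$: flipping turns the blue triangle into a red one but simultaneously makes the opposite side's high-degree vertex produce a new blue triangle.) Labelling the bipartition of each component so that the side with high-degree vertices becomes $Y$ yields an incidence graph $H$.

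Finally, the augmentation may be taken to be an $X$-augmentation. A matched flat edge corresponds to an $H$-vertex $v$ of degree exactly $2$; if $v\in Y$ and the augment is non-trivial, the incidence of $H$ bounds the external blue neighbourhood of the augment to at most one vertex per side, and the resulting local configuration (a red clique with at most two external blue neighbours) can be rewritten as an $X$-augmentation of $L(H')$ for a slightly modified incidence graph $H'$ obtained by splitting $v$ into several new $Y$-vertices of degree $1$ and introducing new $X$-vertices of degree at most $2$ to absorb the augment. As a paradigmatic example, the $Y$-augmentation of $L(P_4)$ at $v_3$ with a $(2,2)$-augment is isomorphic, via the symmetry $v_i\mapsto v_{5-i}$ of $P_4$, to the $X$-augmentation of $L(P_4)$ at $v_2$ with the same augment; this symmetry is exactly what the local rewriting emulates in the general case. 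After performing such a rewriting at every $Y$-augmentation, $G$ becomes an $X$-augmentation of $L(H)$ for an incidence graph $H$.

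The main obstacle is the rewriting step: a global swap of the bipartition labels of $H$ is generally impossible without breaking incidence, so the conversion of each $Y$-augmentation must be done locally and glued consistently with the rest of the decomposition. The $\BT$-free constraint is crucial here, as it bounds both the size of the augments and the number of their external blue connections (through the incidence of $H$), ensuring that the local rewriting always yields a valid incidence graph and preserves the global $X$-augmentation structure.
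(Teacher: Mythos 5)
There is a genuine gap. Your argument hinges on the claim that, after making the blue subgraph bipartite, the given $\{\RR,\BB,\BT\}$-free colouring agrees on $L(H)$ with the canonical colouring (blue $=$ shared $X$-endpoint, red $=$ shared $Y$-endpoint) ``up to flipping the colours on each connected component of $L(H)$''. This is not justified and is false: elementary colourings are far from unique. For instance, $K_4=L(K_{1,4})$ has canonical colouring monochromatic, yet colouring a perfect matching blue and the remaining four edges red is $\{\RR,\BB,\BT\}$-free and is not a component-wise flip of the canonical colouring. Since both your degree-$\geq 3$ argument and the parenthetical ``flipping argument'' reason about the canonical colouring rather than the given one, they do not yield the claimed contradiction with $\BT$-freeness of the \emph{actual} colouring. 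A second, related problem is that the Maffray--Reed decomposition of $G$ as an augmentation of $L(H)$ is an existence statement about the \emph{graph} $G$; it carries no a priori relationship to the particular colouring you are given, so constraints on the colouring cannot be transported to constraints on $H$ without further work. Finally, the ``rewriting step'' converting $Y$-augmentations into $X$-augmentations of a modified incidence graph is only sketched (you yourself flag it as the main obstacle), and the one worked example relies on a global symmetry of $P_4$ that, as you note, does not generalize.

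For contrast, the paper's proof never invokes the Maffray--Reed decomposition of $G$. It works directly with the colouring: after \cref{lem:blue-bip}, it quotients $G$ by the relation ``same side of a common blue component'', first recolouring entirely red any blue component whose two sides have a common outside neighbour, and then recolouring the cross edges of each remaining blue component blue. The resulting colouring of $G/{\sim}$ is $\{\BB,\RR,\BT,\RBB\}$-free, so \cref{prop:BB-RR-BT-RBB} identifies $G/{\sim}$ as a diamond or the line graph of an incidence graph, and $G$ is recovered as an augmentation of $G/{\sim}$ along blue (flat) edges, which are all on the $X$-side by construction. If you want to salvage your approach, you would need to prove a structural statement about \emph{arbitrary} $\{\RR,\BB,\BT\}$-free colourings of augmentations of line graphs, which is essentially what the quotient construction accomplishes.
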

\begin{proof}
    Assume without loss of generality that $G$ is connected. Consider a
    $\{\RR,\allowbreak\BB,\allowbreak\BT\}$-free $2$-edge-colouring of $G$ where
    $R$ is the set of red edges, and $B$ the set of blue edges. By
    Lemma~\ref{lem:blue-bip}, we may assume that $(V(G),B)$ is a bipartite
    graph. Consider the equivalence relation $x \sim y$ defined by ``the
    distance between $x$ and $y$ in $(V(G),B)$ is finite and even''. It follows
    by finite induction and the choice of colouring of $G$ that if $x\sim y$,
    then $xy\in R$. Let $H$ be a connected component of $(V(G),B)$, and $(X,Y)$
    a bipartition of its vertex set. Thus, $X$ and $Y$ induce red cliques, and
    if $u\in V(G)\setminus V(H)$ is a neighbour of some $x\in X$, then $ux\in
    R$, and hence $ux'\in R$ for every $x'\in X$ (because the colouring has no
    induced red path on three vertices). It also follows from this observation
    that if $u$ and $v$ are neighbours of some $x$ and $x'$ in $X$, then $uv\in
    R$.  Our goal now is to show that we may assume without loss of generality
    (up to recolouring) that there is no vertex $u\in V(G)\setminus V(H)$ with a
    neighbour in $X$ and a neighbour in $Y$. So suppose that there is such a
    vertex $u\in V(G)\setminus V(H)$ adjacent to some $x\in X$ and some $y\in
    Y$. Hence, for every $x'\in X$ and $y'\in Y$ there is a red path $x',u,y'$
    and so $x'y'\in E(G)$. Putting all these thing together we observe the
    following:
    \begin{itemize}
        \item $V(H)$ induces a clique in $G$ (with some edges being blue and
          some red),
        \item if a vertex $v\in V(G)\setminus V(H)$ is adjacent to some $z\in
          V(H)$, then $vz'\in R$ for every $z'\in V(H)$.
    \end{itemize}
    Hence, for every blue component $H$ with bipartition $(X,Y)$ such that there
    is a common neighbour $u\in V(G)\setminus V(H)$ of some $x\in X$ a and some
    $y\in Y$, we can recolour all blue edges red to obtain a $\{\RR, \BB,
    \BT\}$-free $2$-edge-colouring of $G$. Moreover, this colouring has the
    property that every blue component $H$ has a bipartition $(X,Y)$ such that
    there are no common neighbours of $X$ and $Y$ outside of $H$. We may further
    recolour all the $XY$-edges blue, and not create any monochromatic blue path
    nor a blue triangle. This last modification guarantees that if $x\sim y$,
    $w\sim z$, and $xw,yz\in E(G)$, the $xw$ and $yz$ have the same colour.
    Therefore, this colouring of $G$ defines a canonical $2$-edge-colouring of
    $G/{\sim}$. Moreover, it is straightforward to observe that this is a
    $\{\BB,\RR,\BT,\RBB\}$-free $2$-edge-colouring of $G/{\sim}$, so $G/{\sim}$
    is either a diamond or the line graph of an incidence graph
    (\cref{prop:BB-RR-BT-RBB}). Using the fact that there was no blue component
    $H$ with bipartition  $(X,Y)$ such that $X$ and $Y$ have a common neighbour
    outside $X\cup Y$,  it follows that the blue edges in this colouring of
    $G/{\sim}$ do not belong to a triangle. Therefore, one of the following
    holds:
    \begin{itemize}
        \item $G/{\sim}$ is a diamond, and since every edge of the diamond
          belong to a triangle, $\sim$ is the equality relation on $V(G)$, and
          $G/{\sim} = G$.
        \item Otherwise, $G/{\sim}$ is the line graph of an incidence
          multigraph, no blue edge in the colouring of $G/{\sim}$ belongs to a
          triangle, and by the arguments above, $G$ is an augmentation along
          blue edges of $G/{\sim}$.
    \end{itemize}
    Since the diamond is an augmentation of $K_2$, and $K_2$ is the line graph
    of $P_3$ (which is the incidence graph of $K_2$), we conclude that in either
    of the cases above $G$ is an $X$-augmentation of the line graph of an
    incidence graph.
\end{proof}

In the following statement we talk about \emph{incidence multigraphs}, i.e.,
bipartite multigraphs $H(X,Y)$ where $|N(x)| \le 2$ for every $x\in X$. 

\begin{theorem}
\label{BB-RR-BT-RRB}
    The following statements are equivalent for a graph $G$.
    \begin{enumerate}
        \item $G$ admits a $\{\BB, \RR, \BT\}$-free $2$-edge-colouring.
        \item $G$ admits a $\{\BB,\RR,  \RT\}$-free $2$-edge-colouring.
        \item $G$ admits a $\{\BB,\RR,\BT,\RRB\}$-free $2$-edge-colouring,
        \item $G$ admits a $\{\BB,\RR,\RT,\RBB\}$-free $2$-edge-colouring,
        \item $G$ is an $X$-augmentation of the line graph of an incidence
        multigraph.
        \item $G$ is an $X$-augmentation of the line graph of an incidence
        graph.
    \end{enumerate}
\end{theorem}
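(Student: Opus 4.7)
The plan is to establish the cycle
$(3)\Rightarrow(1)\Rightarrow(6)\Rightarrow(5)\Rightarrow(3)$
and then to obtain $(1)\Leftrightarrow(2)$ and $(3)\Leftrightarrow(4)$ for free by swapping the roles of red and blue. The implication $(3)\Rightarrow(1)$ is immediate because a $\{\BB,\RR,\BT,\RRB\}$-free colouring is in particular $\{\BB,\RR,\BT\}$-free, and $(6)\Rightarrow(5)$ is trivial since every graph is a multigraph. The implication $(1)\Rightarrow(6)$ is exactly the content of \cref{lem:RR-BB-BT->X-aug}. Hence the heart of the argument lies in proving $(5)\Rightarrow(3)$.

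For $(5)\Rightarrow(3)$, let $G$ be an $X$-augmentation of $L(H)$ for some incidence multigraph $H(X,Y)$. Following the strategy of \cref{lem:bipartite-multigraphs}, I would colour $L(H)$ by declaring an edge $ef$ \emph{red} whenever $e$ and $f$ share a vertex in $Y$, and \emph{blue} whenever they share a vertex in $X$ but no vertex in $Y$; then every isolated edge of $L(H)$ is recoloured blue. The first task is to check that this is a $\{\BB,\RR,\BT,\RRB\}$-free colouring. For the red patterns: two red edges meeting at a vertex $e$ of $L(H)$ would require $e$, seen as an edge of $H$, to be incident to two distinct $Y$-vertices, which is impossible; hence any two red edges sharing a vertex of $L(H)$ actually share a common $Y$-vertex and close a red triangle. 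This simultaneously rules out induced red $P_3$'s and induced copies of $\RRB$. The analogous argument for the blue colour uses crucially that $|N(x)|\le 2$ for every $x\in X$: two blue edges at $e$ would force either $e$ to be incident to two different $X$-vertices, or some $x\in X$ to be incident to three distinct edges, both impossible. This rules out blue $P_3$'s and hence all blue triangles, and recolouring isolated edges blue cannot create a blue $P_3$.

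The second task is to extend this colouring to $G$. Since the matching edges $e_1f_1,\dots,e_kf_k$ along which we augment each share an $X$-vertex, a quick case analysis shows that every $e_if_i$ is blue in our colouring: if $e_i$ and $f_i$ are not parallel in $H$, the defining rule paints $e_if_i$ blue directly; if they are parallel, flatness of $e_if_i$ together with the bound $|N(x)|\le 2$ forces $e_if_i$ to be an isolated edge of $L(H)$, so the recolouring step makes it blue. Consequently $G$ is a \emph{blue}-augmentation of $L(H)$ with respect to our colouring. Applying the colour-symmetric versions of both items of \cref{lem:red-augmentations} iteratively along the matching (order-independence of augmentations is built into the definition of augmentation), we obtain a $\{\BB,\RR,\BT,\RRB\}$-free elementary colouring of $G$, which is precisely statement (3).

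The main obstacle I expect is in the first task: making sure that recolouring the isolated edges of $L(H)$ does not spoil the inductive arguments that rule out $\RRB$, and at the same time that every edge of the augmenting matching ends up blue after this recolouring. The subtle case is when $e_i$ and $f_i$ are parallel edges of $H$, where the naive rule paints $e_if_i$ red and only the recolouring step turns it blue; verifying that flatness together with the degree bound on $X$ actually forces $e_if_i$ to be isolated in $L(H)$ is the key observation, and is what allows the final application of \cref{lem:red-augmentations} to go through.
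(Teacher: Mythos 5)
Your overall architecture coincides with the paper's: the same trivial implications, \cref{lem:RR-BB-BT->X-aug} for $(1)\Rightarrow(6)$, and an explicit colouring of the line graph followed by \cref{lem:red-augmentations} for the return implication; your colouring is exactly the colour-swap of the paper's (the paper targets $(5)\Rightarrow(4)$ with ``blue iff the two edges meet in $Y$'', you target $(5)\Rightarrow(3)$ with the roles of the colours exchanged). The one real difference is the degenerate case of a parallel pair in the augmenting matching: the paper uses connectivity to conclude that $H$ is just two parallel edges, so $G=K_2$ and the augmentation is co-bipartite, and then invokes \cref{pro:co-bip}; you instead recolour isolated edges of $L(H)$ and stay inside the augmentation machinery. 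Your variant is arguably tidier, since it avoids a separate appeal to \cref{pro:co-bip}, and your observation that flatness alone forces a parallel pair to be an isolated component of $L(H)$ is correct.

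However, one step of your verification is wrong as stated. You claim that two blue edges at a vertex $e$ of $L(H)$ would force ``some $x\in X$ to be incident to three distinct edges'', and declare this impossible. In a multigraph it is not: the hypothesis is $|N(x)|\le 2$, i.e., at most two \emph{distinct neighbours}, so $x$ may carry three or more edges when some are parallel. Concretely, let $x$ have two parallel edges $e,g$ to $y_1$ and one edge $f$ to $y_2$; then $ef$ and $fg$ are blue while $eg$ is red, so two blue edges do meet at $f$ and form an $\RBB$ triangle. This does not hurt the theorem ($\RBB$ is permitted in $(3)$), but your argument as written would wrongly exclude this configuration. The correct reasoning is: three distinct edges at $x$ with $|N(x)|\le 2$ must contain a parallel pair; if that pair is one of the two blue pairs we contradict blueness, and otherwise the remaining pair is red --- this still rules out $\BT$, and induced blue copies of $P_3$ are excluded simply because the three edges pairwise meet at $x$ and are therefore pairwise adjacent in $L(H)$. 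With this local repair the rest of your proof goes through.
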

\begin{proof}

Again, (1) and (2) are clearly equivalent, and so are (3) and (4). Also, (3) or
(4) imply the first two itemized statements, and (6) implies (5).
Lemma~\ref{lem:RR-BB-BT->X-aug} shows that (1) implies (6).

Let us prove that (5) implies (4). Let $G$ be the line graph of a bipartite
multigraph $H(X,Y)$, and without loss of generality assume that $H(X,Y)$ is
connected. Colour an edge $ef\in E(G)$ blue if $e$ and $f$ are incident to
a common vertex in $Y$ and otherwise red. Notice that all edges $ef\in
E(G)$ where $e$ and $f$ are parallel edges in $H$, are coloured blue. Since
$|N(x)|\le 2$, given any three edges $e_1,e_2,e_3$ incident to a common vertex
$x\in X$,  it must be the case that $e_i$ and $e_j$ are parallel edges from some
$i,j\in\{1,2,3\}$ and $i\neq j$. Hence, there are no blue triangles in this
colouring of $G$. With similar arguments as in the proof of
Lemma~\ref{lem:bipartite-multigraphs} one can see that this is a
$\{\BB,\RR,\RBB\}$-free colouring of $G$. We will now show that every
$X$-augmentation $G'$ of $G$ along flat edges $e_1f_1,\dots, e_kf_k$ admits a
$\{\BB,\RR,\RR,\RBB\}$-free colouring. Notice that in particular, if every flat
edge $e_if_j$ is coloured red, then by \cref{lem:red-augmentations} $G'$ admits
a $\{\BB,\RR,\RR,\RBB\}$-free colouring. Now suppose that there is some flat
edge $ef = e_if_i$ of $G$ coloured blue. This means that $ef$ is a blue edge
that does not belong to a triangle (i.e., is flat) and that $e$ and $f$ are
incident to some common vertex $x\in X$. It follows from the definition of the
colouring of $G$ that $e$ and $f$ are parallel edges in $H$, and let $y\in Y$ be
the other end-vertex of these edges. Since  $e$ and $f$ do not belong to a
triangle in $G$, and $H$ is connected, it must be that $H(X,Y)$ consists of the
two vertices $x,y$ and the two edges $e,f$. Hence, $G$ consists of the single
edge $ef$, and any augmentation of $G$ is a co-bipartite graph. If follows by
Proposition~\ref{pro:co-bip} that $G'$ admits a $\{\BB,\RR,\RR,\RBB\}$-free
edge-colouring.
\end{proof}

We now leverage the ideas of the previous proof to prove our next result.

\begin{theorem}
\label{RR-BB-RRB}
For a graph $G$ the following statements are equivalent:
\begin{enumerate}
  \item $G$ admits a $\{\RR, \BB, \RRB\}$-free $2$-edge-colouring.
  \item $G$ admits a $\{\RR, \BB, \RBB\}$-free $2$-edge-colouring.
  \item $G$ is a skew-augmentation of the line graph of a bipartite multigraph.
\end{enumerate}
\end{theorem}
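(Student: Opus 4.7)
The equivalences (1) $\Leftrightarrow$ (2) hold by the red--blue colour symmetry (swapping red and blue in any colouring interchanges $\RRB$ and $\RBB$), and (3) $\Rightarrow$ (2) is the content of \cref{lem:bipartite-multigraphs}. The substantive direction is therefore (2) $\Rightarrow$ (3), and this is where I focus my attention.

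Fix a $\{\RR, \BB, \RBB\}$-free $2$-edge-colouring $(B, R)$ of $G$. The first step is the structural observation that every connected component of the blue subgraph $(V(G), B)$ is a blue clique in $G$: along any blue path $u_0, u_1, \dots, u_k$, a chord $u_i u_j$ must lie in $E(G)$ (else we exhibit an induced $\BB$) and must be blue (else we exhibit an induced $\RBB$), so a straightforward induction on $j - i$ closes each blue component into a blue clique. Since $G$ is in particular $\{\RR, \BB\}$-free, it is an elementary graph, and \cref{thm:elementary-graphs} presents $G$ as an augmentation of $L(H_0)$ along a matching of flat edges $e_1 f_1, \dots, e_k f_k$ with cobipartite augments $B_1, \dots, B_k$, for some bipartite multigraph $H_0$.

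The heart of the proof is to show that this decomposition can be chosen to be skew. The key asymmetry is captured by \cref{lem:red-augmentations}: in the canonical colouring of $L(H_0)$ in which $ef$ is blue when $e, f$ share a $Y$-vertex of $H_0$ and red otherwise, an augment placed on an $X$-shared flat edge (canonically red) extends $\RBB$-freely --- producing $\RRB$-triangles, but no $\RBB$-triangles, inside the augment --- whereas an augment placed on a $Y$-shared flat edge (canonically blue) necessarily produces $\RBB$-triangles inside the cobipartite augment itself. The hypothesis that $G$ admits a $\RBB$-free colouring therefore forces some valid presentation in which every augment is $X$-shared, i.e.\ the decomposition is an $X$-augmentation.

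To make the argument concrete, I would build the bipartite multigraph $H$ and its skew augmentation data directly from the given colouring. The $Y$-vertices of $H$ are taken to be the blue cliques of $G$ (well-defined by the first observation); the $X$-vertices and the cobipartite augments are extracted from the red subgraph by distinguishing, among maximal red substructures, those that form $G$-cliques (corresponding to $X$-vertices of $H$) from those that form bipartite cross-edge configurations between pairs of blue cliques (corresponding to cobipartite augments). The principal technical obstacle is exactly this identification: the red subgraph of $G$ need not be a cluster, since red paths of length at least three whose endpoints lie in a common blue clique arise whenever the colouring contains $\RRB$-triangles. A careful case analysis driven by the blue-clique partition and by \cref{lem:red-augmentations} then completes the verification that $G$ is the desired $X$-augmentation of $L(H)$.
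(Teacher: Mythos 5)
Your treatment of the easy implications is fine: (1) $\Leftrightarrow$ (2) by colour symmetry, (3) $\Rightarrow$ (2) via \cref{lem:bipartite-multigraphs}, and the opening observation that in a $\{\RR,\BB,\RBB\}$-free colouring every blue component closes into a blue clique is correct and matches (up to the colour swap) the first step of the paper's argument for (1) $\Rightarrow$ (3). However, the substantive direction (2) $\Rightarrow$ (3) is not actually proved in your proposal; the two places where the real work lives are both deferred. First, the assertion that ``the hypothesis that $G$ admits a $\RBB$-free colouring therefore forces some valid presentation in which every augment is $X$-shared'' does not follow from \cref{lem:red-augmentations}: that lemma goes in the opposite direction (it shows that red-augmentations of well-coloured graphs remain well-coloured), and it says nothing about which Maffray--Reed presentations of $G$ are compatible with a given colouring. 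Starting from an arbitrary augmentation presentation supplied by \cref{thm:elementary-graphs} and arguing that it ``can be chosen to be skew'' is exactly the claim that needs proof, and no mechanism for re-choosing the presentation is given. Second, the closing sentence --- ``a careful case analysis \dots\ then completes the verification'' --- is a placeholder for the entire technical core of the theorem.

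For comparison, the paper does not pass through \cref{thm:elementary-graphs} at all. Working with the $\RRB$-free colouring, it first proves a genuine dichotomy for each component $H$ of the opposite colour: either $H$ is complete multipartite, or $H$ meets at most two of the monochromatic cliques. This is established by a nontrivial induction on shortest paths and is precisely what tames the obstacle you correctly identify (that the second colour class need not be a cluster). It then introduces two orthogonal equivalence relations $\sim$ and $\sim_t$ keyed to that dichotomy, shows they respect the colouring, and proves that the double quotient $(G/{\sim_t})/{\sim}$ carries a $\{\BB,\RR,\BT,\RBB\}$-free colouring, so that \cref{J3J4J7J8J9} (via \cref{prop:BB-RR-BT-RBB}) identifies the quotient as the line graph of an incidence graph; the true-twin structure of $\sim_t$ and the augmentation argument of \cref{lem:RR-BB-BT->X-aug} then recover $G$ as a skew-augmentation. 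None of the dichotomy, the quotient construction, or the reduction to \cref{J3J4J7J8J9} appears in your proposal, so as written it is an outline of intent rather than a proof.
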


\begin{proof}
The first two are symmetrically equivalent. We now show that (1) implies (4),
and we proceed with similar arguments as in the proof of ``(1)$\implies$(4)'' in
\cref{BB-RR-BT-RRB}. Let $R$ be the set of red edges and $B$ the set of blue
edges in a $\{\RR, \BB, \RRB\}$-free $2$-edge-colouring of $G$. The graph
$(V(G),R)$ is a disjoint union of (red) cliques $W_1,\dots, W_k$. We claim that
for each blue component $H$ (i.e., a connected component $H$ of $(V(G),B)$), one
of the following holds:
\begin{itemize}
    \item either $H$ is a complete multipartite graph, or
    \item $H$ intersects at most two red cliques.
\end{itemize}
Assume that $H$ intersects at least three cliques. Hence, since $H$ is
connected, there must be at least one vertex $u$ that has a pair of blue
neighbours belonging to different red cliques. Let $U$ be the non-empty set of
vertices with this property. We now claim the following: (a) if $x\in U$ and $y$
is a vertex of $H$ that belongs to a different red clique that $x$, then $xy\in
E(H)\subseteq B$, and (b) $U = V(H)$. Notice that if we prove (a) and (b), it
immediately follows that $H$ is a complete multipartite graph. To prove (a) let
$x\in U$ and $x = x_1,\dots, x_k = y$ be a shortest $xy$-path in $H$.
Anticipating a contradiction, assume that $k\ge 3$. In this case, it follows
with similar arguments as in the proof of \cref{BB-RR-BT-RRB} that $x_i$ belongs
to the same red clique $W_{i_1}$ as $x_1$ if $i$ is odd, and to the same red
clique $W_{i_2}$ as $x_2$ if $i$ is even. Since $y$ belongs to a different red
clique than $x$, then $k$ is even and so $k\ge 4$. By the choice of $x$, there
is some vertex $z$ that belongs to a red clique $W_j$ different from $W_{i_2}$.
Since the $2$-edge-colouring of $G$ has no monochromatic induced paths of length
three, the red edges induce a cluster, and $z$ belongs to a different red clique
than every vertex $x_i$ for $i\in[k]$, it follows by finite induction that $z$
is adjacent to $x_i$ for each $i\in[k]$ and $zx_i \in B$. Hence, $x,z,y$ is a
blue path of length three, and since $y$ and $x$ belong to different red
cliques, then $xy\in  E(H)\subseteq B$. These arguments prove (a), to see that
(b) holds let $x\in U$ and let $y\in V(H)\setminus\{x\}$. If $y$ is in a
different red clique than $x$, then $xy\in E(H)\subseteq B$, and let $z$ be a
neighbour of $x$ belonging to a different red clique than $y$. Using the choice
of colouring of $G$, we conclude that $yz\in E(H)\subseteq B$, and so $y$ has a
pair of neighbours belonging to different red cliques. Now, if $y$ belongs to
the same red clique as $x$, let $w$ be a blue neighbour of $y$ (such a neighbour
exists because $H$ is connected). By (a) we know that $xw\in E(H)\subseteq B$,
and again let $z$ be a blue neighbour of $x$ belonging to a different red clique
than. With similar arguments as before, we see that $zw\in E(H)\subseteq B$, and
in turn this implies that $zy\in E(H)\subseteq B$. Putting all together we
conclude that every component $H$ of $(V(G),B)$ is either a complete
multipartite graph, or it intersects at most two red cliques.

Now, notice that if $H$ is a complete multipartite graph, then every pair of
vertices $x,y$ of $H$ that belongs to the same red clique forms a true twins
pair in $G$. Again, we consider the equivalence relation $x \sim y$ defined by
``$x$ and $y$ belong to  the same red clique, and to a common blue component $H$
that intersects at most two red cliques''. Further consider the equivalence
relation $x\sim_t y$ defined by ``$x$ and $y$ belong to the same red clique, and
to a common blue component $H$ that intersects at least three red cliques''.
Clearly, these equivalence relations are orthogonal, i.e., if $[x]_{\sim_t}$ is
a non-trivial equivalence class, then for every $y$ in this class, the
equivalence class $[y]_{\sim}$ is trivial (and vice versa). Proceeding similarly
to the proof of \cref{lem:RR-BB-BT->X-aug} one can notice that the equivalence
relations $\sim$ and $\sim_t$ respect the edge colouring, and so, the
$2$-edge-colouring of $G$ defines a $2$-edge-colouring of $(G/{\sim_t})/{\sim}$.
It is straightforward to observe that this is a $\{\BB,\RR,\BT,\RBB\}$-free
$2$-edge-colouring of $(G/{\sim_t})/{\sim}$.  
Hence, by \cref{J3J4J7J8J9} $(G/{\sim_t})/{\sim}$ is the line graph of an
incidence graph. Since every pair of vertices equivalent under $\sim_t$ is a
true twin pair,  
$G/{\sim}$ is the line graph of an incidence multigraph. Finally, with similar
arguments as in the proof of \cref{lem:RR-BB-BT->X-aug}, we conclude that $G$ is
an augmentation along blue edges of $G/{\sim}$, i.e., $G$ is a skew-augmentation
of the line graph of a bipartite multigraph.
\end{proof}

Finally, up to colour symmetry there are two remaining cases:
$\{\BB,\RR,\BT,\RT\}$, and $\{\BB,\RR,\BT,\RT, \RBB\}$. We conclude this section
by characterizing the graph classes expressed by these sets.

\begin{proposition}\label{prop:BB-RR-BT-RT} The following statements are
    equivalent for a graph $G$.
    \begin{enumerate}
        \item $G$ admits a $\{\BB,\RR,\BT,\RT\}$-free $2$-edge-colouring, 
        \item each connected component of $G$ is an induced subgraph of either
            $K_5$, the diamond, an even cycle, or a  butterfly, and
        \item $G$ is a $\{$odd-hole, claw, house, bull, gem, $4$-wheel, kite,
            $X_2,X_3, K_4 +l, K_5-e, K_6\}$-free graph.
    \end{enumerate}
\end{proposition}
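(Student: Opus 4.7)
The plan is to establish the cycle $(1) \Rightarrow (3) \Rightarrow (2) \Rightarrow (1)$. The implication $(2) \Rightarrow (1)$ is straightforward: since the property in (1) is hereditary, it suffices to construct a $\{\BB,\RR,\BT,\RT\}$-free $2$-edge-colouring of each of the four base graphs. The $K_5$ admits the classical \emph{pentagon-plus-pentagram} colouring, i.e., two edge-disjoint Hamiltonian $5$-cycles of opposite colours; since neither the pentagon nor the pentagram contains a triangle, every triangle of $K_5$ is bichromatic. The diamond admits a colouring where both of its triangles are $\RRB$ with the shared chord coloured red, so that the two induced $P_3$'s between the two non-adjacent vertices alternate colours. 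Any even cycle is coloured by alternating red and blue. The butterfly admits a colouring where one triangle is $\RRB$ and the other is $\RBB$, forced so that the apex has red degree~$2$ and blue degree~$2$.

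For $(1) \Rightarrow (3)$, I argue that each of the twelve forbidden graphs admits no $\{\BB,\RR,\BT,\RT\}$-free colouring. Odd holes of length at least~$5$ and the claw already fail $\{\BB,\RR\}$-freeness by Proposition~\ref{Gal(G)}, while $K_6$ is excluded by Ramsey's theorem. The key auxiliary observation, used throughout the remaining cases, is that for every vertex $v$ in such a coloured graph, the red neighbourhood $N_R(v)$ spans a blue clique of size at most~$2$: two non-adjacent vertices in $N_R(v)$ would induce a red $P_3$ through $v$, a red edge within $N_R(v)$ would create a red triangle, and a third vertex would force a blue triangle. The symmetric claim gives $|N_B(v)| \le 2$, and hence $\Delta(G) \le 4$. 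For each of house, bull, gem, $4$-wheel, kite, $X_2$, $X_3$, $K_4+l$, and $K_5-e$, the induced $P_3$'s of the graph force specific colour alternations that in turn make some triangle monochromatic; the arguments are short and uniform.

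For $(3) \Rightarrow (2)$, I take a connected graph $G$ avoiding the twelve forbidden subgraphs and show it is an induced subgraph of $K_5$, the diamond, an even cycle, or the butterfly. If $G$ is triangle-free, claw-freeness together with triangle-freeness yields $\Delta(G) \le 2$, so $G$ is a path or a cycle, and odd-hole-freeness eliminates odd cycles of length at least~$5$. The triangle-containing case is the \emph{main obstacle}. I fix a triangle in $G$ and analyse how every other vertex attaches to it: the family $\{\text{kite}, K_4+l, X_2, X_3, K_5-e, K_6\}$-freeness rules out every diamond-extension other than $K_4$ and $K_5$, while $\{\text{house}, \text{bull}, \text{gem}\}$-freeness rules out every paw-extension other than the butterfly. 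The combined case analysis shows that any connected $G$ avoiding the twelve forbidden subgraphs is an induced subgraph of one of the four prescribed shapes, closing the cycle.
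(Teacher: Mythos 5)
Your architecture is the same as the paper's (the cycle $(1)\Rightarrow(3)\Rightarrow(2)\Rightarrow(1)$, with the forbidden five-vertex graphs dispatched by forcing a monochromatic triangle, and a structural case analysis for $(3)\Rightarrow(2)$), and your implications $(2)\Rightarrow(1)$ and $(1)\Rightarrow(3)$ are sound as far as they go; the observation that $N_R(v)$ spans a blue clique of size at most two is a pleasant uniform device, though note that the resulting bound $\Delta(G)\le 4$ excludes none of the gem, the $4$-wheel or $K_5-e$, so you still owe the individual alternation arguments for each five-vertex graph, just as the paper does.

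The genuine gap is in $(3)\Rightarrow(2)$, in the triangle-containing case. The claim that $\{$house, bull, gem$\}$-freeness ``rules out every paw-extension other than the butterfly'' is false. Consider the $1$-butterfly (two disjoint triangles joined by a single edge), or a triangle with a pendant induced path of length two. Both are connected, contain a paw, and contain none of the twelve graphs of item~(3): they have no induced cycle other than their triangles, hence no odd hole, no $C_4$, no diamond and no $K_4$ (which disposes of house, gem, $4$-wheel, kite, $X_1$, $X_2$, $K_4+l$, $K_5-e$, $K_6$); every degree-$3$ vertex has two adjacent neighbours, so there is no claw; and each triangle carries an appendage at only one of its vertices, so there is no bull. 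Both graphs also admit $\{\BB,\RR,\BT,\RT\}$-free colourings (give each end-triangle two equal-coloured edges at its attachment vertex and alternate along the connecting path), yet neither is an induced subgraph of $K_5$, the diamond, an even cycle, or the two-triangles-sharing-a-vertex butterfly. What you are missing is exactly the family of $n$-butterflies for $n\ge 1$ and their induced subgraphs; ``a butterfly'' in item~(2) has to be read as ``an $n$-butterfly for some $n\ge 0$'', which is forced by the way \cref{prop:BB-RR-BT-RT-RBB} derives ``odd butterfly'' from this proposition after additionally forbidding even-butterflies and $K_5$. Your analysis of attachments to a single fixed triangle structurally cannot see these graphs, since the second triangle and the far end of a pendant path lie at distance at least two from the fixed triangle; correspondingly, your $(2)\Rightarrow(1)$ colours only the $0$-butterfly. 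Both halves need to be repaired by classifying the full pendant-path/second-triangle structure growing out of the unique high-degree vertex of the fixed triangle.
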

\begin{proof}
    It is a simple exercise to verify that any connected graph listed in the
    second item admits a $\{\BB,\RR,\BT,\RT\}$-free $2$-edge-colouring. We show
    that the first item implies the third one by contraposition, i.e., no graph
    listed in the third item admits a $\{\BB,\RR,\BT,\RT\}$-free
    $2$-edge-colouring. It follows already from \Cref{thm:elementary-graphs}
    that neither the claw nor $C_5$ admit a $2$-edge-colouring without
    monochromatic paths on three vertices. It is a folklore fact that any
    $2$-edge-colouring of $K_6$ contains a monochromatic triangle. Now, notice
    that if $G$ is the house, then any $2$-edge-colouring of $G$ avoiding
    induced monochromatic copies of $P_3$ forces that the unique triangle is
    coloured monochromatically, hence the house does not admit a
    $\{\BB,\RR,\BT,\RT\}$-free $2$-edge-colouring. With similar arguments one
    can notice that neither the bull, the gem, nor the $4$-wheel admit a
    $\{\BB,\RR,\BT,\RT\}$-free $2$-edge-colouring. It is also a simple exercise
    noticing that any $\{\BB,\RR\}$-free colouring of $K_4+l, \textnormal{kite},
    X_2,X_3$ or of $K_5-e$ contain a monochromatic triangle. We illustrate an
    argument for the latter. Consider a colouring of $K_5-e$ with no
    monocrhomatic $P_3$, and let $u_1,u_2$ be the pair of non-adjacent vertices
    of $K_5-e$. So, for any $i\in\{3,4,5\}$ the vertices $u_1,u_i,u_2$ induce a
    path on three vertices, and  these paths must be coloured
    heterochromatically. Also, by the pigeonhole principle, there are two
    $i,j\in\{3,4,5\}$ with $i\neq j$ such that the colours of the edges $u_iu_1$
    and $u_ju_1$ coincide --- without loss of generality we assume these edges
    are coloured red. Hence, $u_iu_2$ and $u_ju_2$ are coloured blue, and hence,
    regardless of the colour of the edge $u_iu_j$ we find a monochromatic
    triangle in such a colouring of $K_5-e$.
    
    Finally, we show that the third itemized statement implies the first one.
    Since the forbidden graphs listed in the third itemized statement are
    connected, it suffices to prove the second statement for a connected graph
    $G$. First notice that if $G$ contains four vertices $v_1,v_2,v_3,v_4$
    inducing a diamond, then $V(G) = \{v_1,v_2,v_3,v_4\}$. Indeed, if $V(G)
    \setminus \{v_1,v_2,v_3,v_4\}$ is not empty, and since $G$ is connected,
    then a vertex $v_5$ adjacent to at least one of $v_1,v_2,v_3,v_4$ exists.
    Any possible configuration of adjacencies between $v_5$ and
    $v_1,v_2,v_3,v_4$ create one of the forbidden graphs in $\{$claw, $4$-wheel,
    kite, $X_2,X_3,K_5-e\}$, contradicting the choice of $G$. Now, we assume
    that $G$ is also diamond-free. With similar arguments as before, one can
    notice that if $G$ contains a clique on five vertices, then $G\cong K_5$,
    and if $G$ is $K_5$-free and contains a clique on four vertices, then
    $G\cong K_4$.  Assuming that $G$ is also $\{K_4$, diamond$\}$-free, the
    reader may use a similar technique to notice that if $G$ contains an even
    cycle, then $G$ is an even cycle. To conclude the proof we show that if $G$
    avoids the forbidden graphs in the last itemized statement and $G$ is also
    $\{K_4,$ diamond, even-hole$\}$-free, then $G$ is an induced graph of a
    butterfly. Most of the graph listed in the fourth contain either a diamond
    or a complete graph on four vertices, so it suffices to assume that $G$ is a
    $\{$claw, hole, diamond, house, bull, $K_4\}$-free graph. 
\end{proof}

\begin{proposition}\label{prop:BB-RR-BT-RT-RBB}
    The following statements are equivalent for a graph $G$.
    \begin{enumerate}
        \item $G$ admits a $\{\BB,\RR,\BT,\RT, \RBB\}$-free $2$-edge-colouring,
        \item $G$ admits a $\{\BB,\RR,\BT,\RT, \RRB\}$-free $2$-edge-colouring,
        \item each connected component of $G$ is an induced subgraph of either
          $K_4$, the diamond, an even cycle, or an odd butterfly, and
        \item $G$ is a $\{K_5,$ odd-hole, claw, house, bull, $K_4+l$, kite,
          $X_2, X_3$, $K_5-e$, gem, $4$-wheel, even-butterfly$\}$-free graph.
    \end{enumerate}
\end{proposition}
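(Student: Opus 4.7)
The plan is to prove $(1)\Leftrightarrow(2)$ and then close the cycle $(3)\Rightarrow(1)\Rightarrow(4)\Rightarrow(3)$, reusing \cref{prop:BB-RR-BT-RT} whenever possible. The equivalence of (1) and (2) is immediate from the colour-swap automorphism, which interchanges $\RBB$ with $\RRB$ and fixes each of $\BB,\RR,\BT,\RT$, so I will argue only for (1).

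For $(3)\Rightarrow(1)$, I would exhibit explicit $\{\BB,\RR,\BT,\RT,\RBB\}$-free colourings on each allowed component type and then observe that induced subgraphs inherit valid colourings. In $K_4$, colour a Hamilton $4$-cycle red and the complementary perfect matching blue, so that every triangle is $\RRB$ and no induced $P_3$ exists. In the diamond, colour the shared edge together with one further edge of each triangle red, choosing the apices of the two $\RRB$ triangles to be the opposite endpoints of the shared edge so that the two induced paths of length two on the non-adjacent vertex pair are heterochromatic. Even cycles take any proper edge $2$-colouring. For an odd butterfly, colour each triangle $\RRB$ with its apex at the path-attachment vertex and colour the joining path alternately starting with blue; an odd-length path that starts with blue ends with blue, which matches the fact that both apex-incident path edges must be blue in order to avoid a red induced $P_3$ at either apex.

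For $(1)\Rightarrow(4)$, the inclusion $\{\BB,\RR,\BT,\RT\}\subseteq\{\BB,\RR,\BT,\RT,\RBB\}$ lets me invoke \cref{prop:BB-RR-BT-RT} to exclude odd-hole, claw, house, bull, gem, $4$-wheel, kite, $X_2$, $X_3$, $K_4+l$, and $K_5-e$ at once; only $K_5$ and the even butterflies need new arguments. For $K_5$, every triangle in a $\{\BB,\RR,\BT,\RT,\RBB\}$-free colouring is $\RRB$, and double counting red triangle--edge incidences gives $3r=2\cdot 10=20$, which has no integer solution. For a $(2n)$-butterfly with $n\ge 1$, the no-red-$P_3$ condition at each path-attachment vertex forces that vertex to be the apex of its triangle's $\RRB$, hence both terminal edges of the joining path must be blue; the no-monochromatic-$P_3$ condition along the path then forces alternation, which is incompatible with both endpoints being blue when the path has even length. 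In the $n=0$ case the two apices would coincide and the shared vertex would acquire four red incident edges, producing a red induced $P_3$ between the non-shared vertices of the two triangles.

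For $(4)\Rightarrow(3)$, because the forbidden list of (4) subsumes that of \cref{prop:BB-RR-BT-RT}, each connected component of $G$ is an induced subgraph of $K_5$, a diamond, an even cycle, or a butterfly in the sense of that proposition. Forbidding $K_5$ reduces the first option to induced subgraphs of $K_4$, and forbidding every even butterfly reduces the last to induced subgraphs of an odd butterfly, yielding (3). The main obstacle in the plan is the even-butterfly exclusion, which requires the parity bookkeeping along the joining path together with the separate handling of the collapsed $0$-butterfly; all remaining reductions are either explicit constructions or direct invocations of the previous proposition.
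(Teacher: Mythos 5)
Your proof is correct and follows the same overall skeleton as the paper's: $(1)\Leftrightarrow(2)$ by the colour swap, $(3)\Rightarrow(1)$ by exhibiting colourings, $(1)\Rightarrow(4)$ by delegating most exclusions to \cref{prop:BB-RR-BT-RT}, and $(4)\Rightarrow(3)$ by pruning the component list of \cref{prop:BB-RR-BT-RT} using $K_5$- and even-butterfly-freeness. Where you genuinely diverge is in the two exclusions that \cref{prop:BB-RR-BT-RT} does not cover. For $K_5$ the paper exhibits the unique $2$-edge-colouring with no monochromatic triangle (two edge-disjoint $5$-cycles) and points to a copy of $\RBB$ in it; your double count $3r = 2\binom{5}{3} = 20$ is shorter and sidesteps the uniqueness claim entirely. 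For even-butterflies the paper simply observes that any $\{\BB,\RR,\BT,\RT,\RBB\}$-free colouring is in particular $\{\BB,\RR,\BT,\RBB\}$-free and cites \cref{prop:BB-RR-BT-RBB}, whereas you rederive the parity obstruction from scratch; your version is self-contained but duplicates work the paper has already done. One spot to tighten: in the $0$-butterfly case you assert that the two apices coincide at the shared vertex without justification. The reason is the same forcing you used at the path-attachment vertices, with an edge of the opposite triangle playing the role of the path edge: if the shared vertex $u$ were not the red-apex of one triangle, its two edges in that triangle would have different colours, and then any edge from $u$ into the other triangle could be coloured neither red nor blue. With that sentence added, the argument is complete.
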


\begin{proof}
    Items (1) and (2) are clearly equivalent. The fact that (4) implies (3)
   follows from Proposition~\ref{prop:BB-RR-BT-RT}, and it is straightforward to
   observe that any graph listed in (3) admits a $\{\BB,\RR,\BT,\RT,
   \RBB\}$-free $2$-edge-colouring.
    
    Finally, we show that if $G$ admits a $\{\BB,\RR,\BT,\RT, \RBB\}$-free
    $2$-edge-colouring, then $G$ avoids the forbidden graphs from (4):
    \begin{itemize}
        \item $G$ is $K_5$-free because there is a unique (up to isomorphism)
          $2$-edge-colouring of $K_5$ avoiding both monochromatic triangles;
          namely each colour class induces a $5$-cycle. It is straightforward to
          observe that this colouring contains a copy of $\RBB$.
        \item $G$ is $\{C_5,$ claw, house, bull, gem, $4$-wheel, $K_4+l,
          \textnormal{kite}, X_2, X_3, K_5-e\}$-free by
          Proposition~\ref{prop:BB-RR-BT-RT}.
        \item $G$ is even-butterfly-free by Proposition~\ref{prop:BB-RR-BT-RBB}
          (because in particular, $G$ admits a
          $\{\BB,\RR,\allowbreak\BT,\allowbreak\RBB\}$-free colouring). \qedhere
    \end{itemize}
\end{proof}

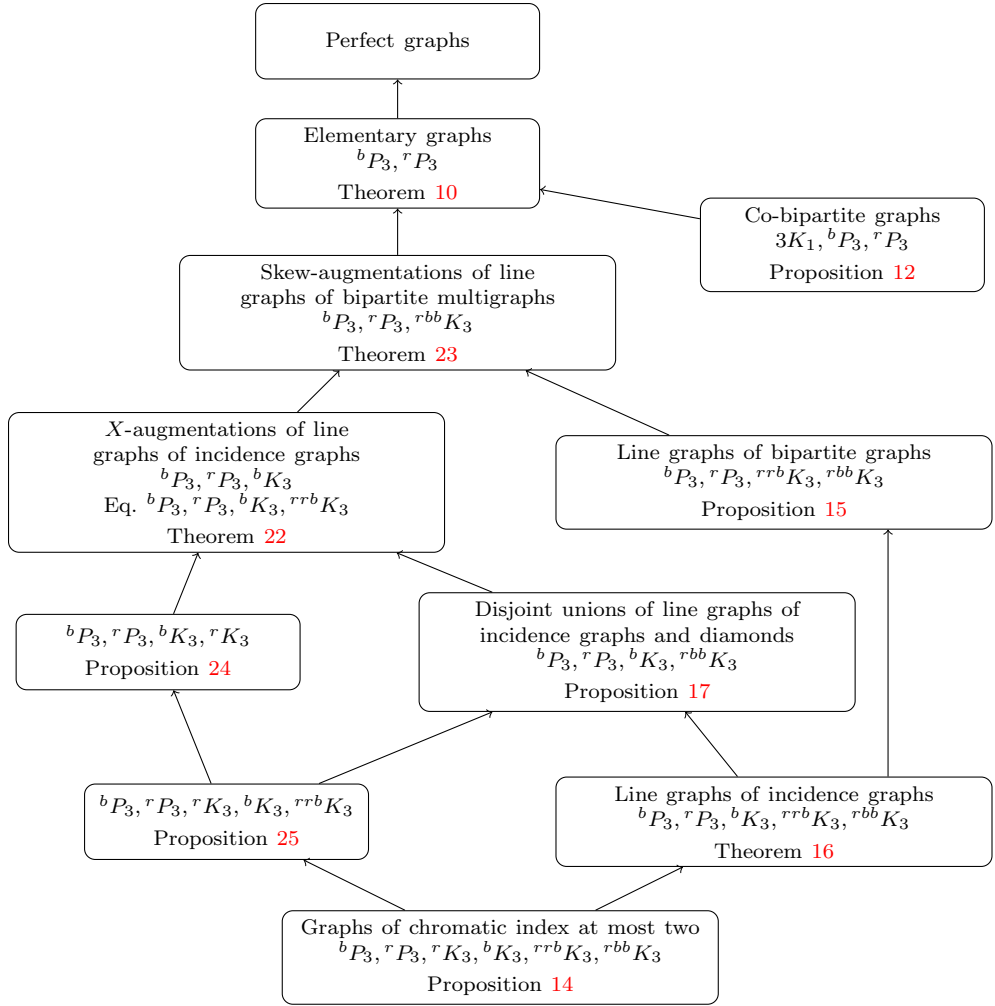
\begin{figure}[htb!]
\centering
\begin{tikzpicture}[scale = 0.9]
    
    \node[block, minimum width=3cm] (allT) at (0,0.5) {\footnotesize Graphs of chromatic index at most two \\$\BB,\RR,\RT, \BT, \RRB, \RBB$\\ \cref{prop:346789}};

    \node[block] (BT-RRB-RBB) at (4,2.5) {\footnotesize  Line graphs of incidence graphs
    \\$\BB,\RR,\BT, \RRB, \RBB$\\ \cref{J3J4J7J8J9}};
    \node[block, minimum width=1cm, text width = 3.5cm] (BT-RT-RBB) at (-4,2.5) {\footnotesize $\BB,\RR,\RT, \BT, \RRB$\\ \cref{prop:BB-RR-BT-RT-RBB}};

    \node[block] (RRB-RBB) at (4,7.5) {\footnotesize Line graphs of bipartite graphs \\$\BB,\RR,\RRB, \RBB$\\ \cref{prop:J3J4J8J9}};
    \node[block] (BT-RBB) at (2,5) {\footnotesize Disjoint unions of line graphs of incidence graphs and diamonds \\$\BB,\RR,\BT, \RBB$\\ \cref{prop:BB-RR-BT-RBB}};
    \node[block, minimum width=1cm,  text width = 3.5cm] (BT-RT) at (-5,5) {\footnotesize $\BB,\RR,\BT, \RT$\\ Proposition~\ref{prop:BB-RR-BT-RT}};

    \node[block, minimum width=1cm,  text width = 3.5cm] (co-bip) at (5,11) {\footnotesize Co-bipartite graphs \\$\IS, \BB,\RR$ \\ \cref{pro:co-bip}};

    \node[block] (RBB) at (-1.5,10) {\footnotesize Skew-augmentations of line graphs of bipartite multigraphs
    \\$\BB,\RR,\RBB$\\ \cref{RR-BB-RRB}};
    
    \node[block, text width = 3.5cm] (EL) at (-1.5,12.2) {\footnotesize Elementary graphs
    \\$\BB,\RR$ \\ \cref{thm:elementary-graphs}};

    \node[block, text width = 3.5cm] (PER) at (-1.5,14) {\footnotesize Perfect graphs};
    
    \node[block] (BT) at (-4,7.5) {\footnotesize $X$-augmentations of line graphs of incidence graphs \\$\BB,\RR,\BT$ \\Eq.\ $\BB,\RR,\BT,\RRB$\\ \cref{BB-RR-BT-RRB}};

    \foreach \from/\to in {EL/PER, RBB/EL, allT/BT-RRB-RBB, allT/BT-RT-RBB, BT-RT-RBB/BT-RBB, BT-RRB-RBB/BT-RBB,
    BT-RT-RBB/BT-RT, BT-RT/BT, BT-RBB/BT, RRB-RBB/RBB, BT/RBB, co-bip/EL}     
    \draw [->] (\from) to (\to);

     \draw [->, transform canvas={xshift=1.5cm}] (BT-RRB-RBB) to (RRB-RBB);
    
\end{tikzpicture}
\caption{The landscape of graph classes characterized in
Sections~\ref{sec:elementary} and~\ref{sec:further-elementary}. In particular, 
of all graph classes expressible by a set 
$\calF$ containing both monochromatic paths, and
$\calF\subseteq\{\BB,\RR,\BT,\RT,\RRB,\RBB\}$.}
\label{fig:landscape}
\end{figure}


\section{Patterns with at most two edges}
\label{sec:at-most-two}

In this section, we answer question \ref{que:two} from the introduction for sets
$\calF$ of 2-edge-coloured graphs  on at most two edges and three
vertices\footnote{Notice that question 3 has a simple solution for such sets
$\calF$: deciding if an input  graph $G$ admits an $\calF$-free orientation can
be solved in polynomial time via a straightforward reduction to 2-SAT. Such
reductions generalize to arbitrary finite sets of forbidden 2-edge-coloured
graphs; we discuss them in \cref{sec:complex}.}, i.e., we propose structural
characterizations of each graph classes expressible by such a set $\calF$. We
begin with a series of results that build up to the main theorem of this section
(\cref{thm:two-edges}).

\begin{proposition}
\label{pro:23}
  For a graph $G$, the following statements are equivalent:
  \begin{enumerate}
    \item $G$ admits a $\{\B,\RR\}$-free colouring.
    \item $G$ admits a $\{\R,\BB\}$-free colouring.
    \item $G$ is a $\{K_1+P_3,C_5\}$-free graph.
    \item $G$ is a join of clusters and $\{3K_1, C_5\}$-free graphs.
  \end{enumerate}
\end{proposition}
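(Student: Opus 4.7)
The plan is to establish (1)$\Leftrightarrow$(2) by colour swapping, (1)$\Rightarrow$(3) by testing the two putative obstructions, (3)$\Rightarrow$(1) by exhibiting a canonical colouring, and (3)$\Leftrightarrow$(4) by complementation combined with Olariu's theorem on paw-free graphs.

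The equivalence (1)$\Leftrightarrow$(2) is immediate by swapping the two colours. For (1)$\Rightarrow$(3), since admitting a $\{\B,\RR\}$-free colouring is hereditary, it suffices to show that neither $C_5$ nor $K_1+P_3$ admits one. In $C_5$, every edge has a vertex non-adjacent to both of its endpoints, so $\B$-freeness forces all edges red, and any two consecutive edges then induce $\RR$. In $K_1+P_3$, the isolated vertex is non-adjacent to both endpoints of either $P_3$-edge, so no blue edge is allowed, and the all-red colouring contains $\RR$.

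The main content is (3)$\Rightarrow$(1). The plan is to colour an edge $uv$ blue if $N[u]\cup N[v]=V(G)$ (i.e., if $\{u,v\}$ is a dominating pair) and red otherwise; then $\B$-freeness is tautological, and the hard part is $\RR$-freeness. Assume, for contradiction, that some induced $P_3$ on vertices $u,v,w$ with $uw\notin E(G)$ has both $uv$ and $vw$ red, and choose witnesses $x\notin N[u]\cup N[v]$ and $y\notin N[v]\cup N[w]$. The core case analysis proceeds as follows: if $xw\notin E(G)$ then $\{u,v,w,x\}$ induces $K_1+P_3$, hence $xw\in E(G)$, and symmetrically $yu\in E(G)$; the inequality $x\neq y$ follows because $xu\notin E(G)$ yet $yu\in E(G)$; and if $xy\notin E(G)$ then $\{v,w,x,y\}$ induces $K_1+P_3$, so $xy\in E(G)$. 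Combining these adjacencies with the non-edges $uw, ux, vx, vy, wy$ forces $\{u,v,w,x,y\}$ to induce $C_5$, contradicting (3).

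Finally, for (3)$\Leftrightarrow$(4) I would pass to complements. Since $\overline{K_1+P_3}$ is the paw and $C_5$ is self-complementary, (3) is equivalent to $\bar G$ being $\{$paw$,C_5\}$-free. By Olariu's theorem, every connected component of a paw-free graph is triangle-free or complete multipartite, so if $A_1,\ldots,A_k$ are the components of $\bar G$, then $G$ is the join of $\overline{A_1},\ldots,\overline{A_k}$, and each factor $\overline{A_i}$ is either a cluster (when $A_i$ is complete multipartite) or a $\{3K_1,C_5\}$-free graph (when $A_i$ is triangle-free, using that $A_i\subseteq\bar G$ is $C_5$-free). The converse direction (4)$\Rightarrow$(3) uses that $K_1+P_3$ and $C_5$ have connected complements, so any induced copy in such a join must lie within a single factor; but no cluster contains $P_3$ and no $\{3K_1,C_5\}$-free graph contains either.
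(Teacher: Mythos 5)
Your proof is correct and follows essentially the same route as the paper's: the colouring you define (blue iff $N[u]\cup N[v]=V(G)$) is exactly the paper's colouring (red iff the edge lies in an induced $\overline{P_3}$), the witness/case analysis for $\RR$-freeness is the same, and the equivalence of (3) and (4) is handled identically via complementation and Olariu's theorem. The only difference is that you spell out a few details the paper leaves implicit (e.g.\ $x\neq y$ and the heredity of the join class), which is fine.
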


\begin{proof}
  The equivalence between the first two statements is trivial. For the
  equivalence between the second and third statement, it is evident that neither
  $K_1+P_3$ nor $C_5$ admit a $\{\B,\RR\}$-free colouring. Now, we show that any
  $\{K_1+P_3,C_5\}$-free graph admits a $\{\B,\RR\}$-free colouring. Let $G$ be
  such a graph. If an edge $e$ belongs to some induced copy of $\overline{P_3}$,
  colour it red; otherwise, colour $e$ blue. Clearly, this colouring is
  $\B$-free. If $u,v,w$ forms an induced copy of $P_3$ in $G$, then both its
  edges are coloured red if and only if there are vertices $x$ and $y$ such that
  $xw, xv, yv$ and $yu$ are not edges of $G$.   If $xu$ or $yw$ are not edges of
  $G$, then $G$ contains an induced copy of $K_1 + P_3$.  Therefore, $xu$ and
  $yw$ are edges of $G$. But this is impossible, since $\{ u, v, w, x, y \}$
  would induce either $K_1+P_3$ or $C_5$, depending on whether $xy$ is an edge
  of $G$.

  To prove the equivalence between the last two statements, notice that
  $K_1+P_3$ is the complement of the paw. Olariu \cite{olariuIPL28} showed that
  a graph $H$ is paw-free if and only if each component of $H$ is triangle-free
  or complete multipartite. Since $C_5$ is a self-complementary graph, if $G$ is
  a $\{K_1+P_3,C_5\}$-free graph, then it is a join of $\{3K_1,C_5\}$-free
  graphs and clusters, since all clusters are $C_5$-free. On the other hand,
  neither $K_1+P_3$ nor $C_5$ are a join of clusters and $\{3K_1, C_5\}$-free
  graphs. This completes the last equivalence.
\end{proof}

\begin{corollary}
\label{cor:023}
  The class of $\{3K_1, C_5\}$-free graphs is expressed by any of the sets
  $\{\IS, \B, \RR\}$ or $\{\IS, \R, \BB\}$.
\end{corollary}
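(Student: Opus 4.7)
The plan is to derive the corollary by combining \cref{pro:23} with \cref{lem:intersection}, and then to simplify the resulting forbidden family by a small observation about $K_1 + P_3$.

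First I would invoke \cref{lem:intersection} with $\calF = \{\B, \RR\}$ and $\calH = \{3K_1\}$. Since $3K_1$ has no edges, its only $2$-edge-colouring is $\IS$ itself, so $\calH' = \{\IS\}$. The lemma therefore tells us that the set $\{\IS, \B, \RR\}$ expresses the class of $3K_1$-free graphs admitting a $\{\B,\RR\}$-free colouring.

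Next I would apply \cref{pro:23}, which says that a graph admits a $\{\B,\RR\}$-free colouring if and only if it is $\{K_1+P_3, C_5\}$-free. Combining these two facts, $\{\IS,\B,\RR\}$ expresses the class of $\{3K_1, K_1+P_3, C_5\}$-free graphs.

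The final step, which is the only non-syntactic point, is to observe that $3K_1$ occurs as an induced subgraph of $K_1+P_3$ (take the isolated vertex together with the two endpoints of $P_3$). Hence every $3K_1$-free graph is automatically $K_1+P_3$-free, and the class of $\{3K_1, K_1+P_3, C_5\}$-free graphs coincides with the class of $\{3K_1, C_5\}$-free graphs. This proves the statement for $\{\IS, \B, \RR\}$. The conclusion for $\{\IS, \R, \BB\}$ then follows immediately by the colour-swap symmetry remarked at the beginning of \cref{sec:simple-fam}, which was already used in \cref{pro:23} to establish the equivalence of its first two items. I do not foresee any real obstacle; the argument is a direct bookkeeping application of the two previously established results.
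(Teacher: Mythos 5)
Your argument is correct and matches the paper's intended derivation: the paper leaves \cref{cor:023} as an immediate consequence of \cref{lem:intersection} and \cref{pro:23}, and the same reduction (that intersecting $\{K_1+P_3,C_5\}$-free graphs with $3K_1$-free graphs gives exactly the $\{3K_1,C_5\}$-free graphs, since $3K_1$ is an induced subgraph of $K_1+P_3$) appears verbatim in the proof of \cref{lem:two-edges}. Nothing to add.
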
 

\begin{proposition}
\label{pro:234}
  For a graph $G$,  the following statements are equivalent:
  \begin{enumerate}
    \item $G$ admits a $\{\B,\RR,\BB\}$-free $2$-edge-colouring.
    \item $G$ admits a $\{\R,\RR,\BB\}$-free $2$-edge-colouring.
    \item $G$ is a $\{K_1 + P_3, \mathrm{claw}\}$-free perfect graph.
    \item $G$ is either a co-bipartite graph or a cluster.
  \end{enumerate}
\end{proposition}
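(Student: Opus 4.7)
The plan is to establish the cyclic chain $(1)\Rightarrow(3)\Rightarrow(4)\Rightarrow(1)$, together with the equivalence $(1)\Leftrightarrow(2)$, which follows immediately from the symmetry of swapping the two colours.

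For $(1)\Rightarrow(3)$: a $\{\B,\RR,\BB\}$-free colouring is in particular $\{\RR,\BB\}$-free, so \cref{thm:elementary-graphs} yields that $G$ is claw-free and perfect. To rule out an induced $K_1+P_3$ with isolated vertex $u$ and path $v_1v_2v_3$, I will observe that the two edges of the induced $P_3$ must receive distinct colours (else we get a monochromatic induced $P_3$), so one of them, say $v_2v_3$, is blue; then $\{u,v_2,v_3\}$ induces the pattern $\B$, a contradiction.

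For $(3)\Rightarrow(4)$: since perfect graphs contain no induced $C_5$, $G$ is $\{K_1+P_3,C_5\}$-free, and \cref{pro:23} produces a decomposition $G=A_1+\cdots+A_k$ as a join in which each factor is a cluster or a $\{3K_1,C_5\}$-free graph. If $k=1$ and $A_1$ is a cluster, then $G$ is a cluster. In every other case I aim to show $\alpha(G)\le 2$: when $k\ge 2$, claw-freeness forbids any $A_i$ from containing three pairwise non-adjacent vertices, since joining them with any vertex of another factor would form a claw; when $k=1$ with $A_1$ being $\{3K_1,C_5\}$-free, the bound $\alpha(A_1)\le 2$ is immediate. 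Since independent sets of a join lie entirely within one factor, $\alpha(G)\le 2$ follows, and the Perfect Graph Theorem together with perfectness of $G$ gives $\overline G$ perfect with $\omega(\overline G)=\alpha(G)\le 2$, hence $\chi(\overline G)\le 2$; so $\overline G$ is bipartite and $G$ is co-bipartite.

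For $(4)\Rightarrow(1)$: if $G$ is a cluster, colouring every edge red yields a $\{\B,\RR,\BB\}$-free colouring vacuously. If $G$ is co-bipartite with vertex partition $V(G)=V_1\cup V_2$ into cliques, I will colour within-clique edges red and cross edges blue. The red edges form two disjoint cliques, hence a cluster, so $\RR$-freeness holds. Any would-be induced blue $P_3$ $u$-$v$-$w$ would have both $uv$ and $vw$ cross, forcing $u$ and $w$ on the same side of the partition and hence adjacent in $G$, contradicting the induced $P_3$; so $\BB$-freeness holds. For any blue cross edge $uv$ with $u\in V_1$, $v\in V_2$, every other vertex lies in $V_1$ (adjacent to $u$) or in $V_2$ (adjacent to $v$), excluding $\B$. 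The main obstacle I anticipate is the $(3)\Rightarrow(4)$ step, where the case analysis on the join decomposition from \cref{pro:23} must be combined with claw-freeness to force $\alpha(G)\le 2$, followed by invoking the Perfect Graph Theorem to pass from this bound and perfectness to co-bipartiteness.
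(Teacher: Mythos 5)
Your proposal is correct. The equivalence $(1)\Leftrightarrow(2)$, the implication $(1)\Rightarrow(3)$ (via \cref{thm:elementary-graphs} plus an argument for $(K_1+P_3)$-freeness), and the colouring witnessing $(4)\Rightarrow(1)$ all match the paper's proof essentially verbatim; your direct three-line argument that an induced $K_1+P_3$ forces a copy of $\B$ simply replaces the paper's citation of \cref{pro:23} for that step, and both are fine.

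Where you genuinely diverge is $(3)\Rightarrow(4)$. The paper disposes of this in one stroke: $\overline{G}$ is a perfect paw-free graph (and the complement of a claw-free graph, which rules out a triangle living in one component of $\overline{G}$ alongside a nontrivial second component), so by Olariu's theorem $\overline{G}$ is bipartite or complete multipartite, i.e.\ $G$ is co-bipartite or a cluster. You instead re-enter through the join decomposition of \cref{pro:23}, use claw-freeness to force $\alpha(A_i)\le 2$ in every factor of a nontrivial join (and in the single-factor non-cluster case), conclude $\alpha(G)\le 2$, and then invoke the Perfect Graph Theorem to get $\chi(\overline G)=\omega(\overline G)\le 2$. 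Your route is longer but makes the role of claw-freeness completely explicit — in the paper it is hidden inside the applicability of Olariu's dichotomy to a possibly disconnected complement — and it isolates the clean intermediate statement ``$G$ is perfect with $\alpha(G)\le 2$, hence co-bipartite.'' Both arguments ultimately rest on the same two external inputs (Olariu's paw-free structure theorem, which underlies \cref{pro:23}, and the Perfect Graph Theorem), so neither buys a weaker hypothesis; the paper's version is the more economical write-up, yours the more self-contained one.
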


\begin{proof}
  The equivalence between the first two statements is trivial. Suppose that $G$
  satisfies the first statement. In particular, $G$ is an elementary graph so,
  by \cref{thm:elementary-graphs}, we know that $G$ is perfect and  claw-free.
  Also, by \cref{pro:23}, $G$ is $(K_1+P_3)$-free. Thus, the first item implies
  the third one.
  
  Now, we show that the last item implies the first one. If $G$ is co-bipartite,
  colour both cliques red and the remaining edges blue. If $G$ is a cluster,
  colour all edges red. Clearly, these are $\{\B,\RR,\BB\}$-free
  $2$-edge-colourings of $G$.
  
  For the remaining implication, recall that a result of Olariu in
  \cite{olariuIPL28} states that every perfect paw-free graph is either
  bipartite or complete multipartite. Therefore, the third item implies the
  fourth one by applying this result to the complement of a $\{K_1+P_3,
  \textrm{claw}\}$-free graph.
\end{proof}

\begin{corollary}
\label{cor:1234}
  Graphs that admit a $\{\R,\B,\RR,\BB\}$-free $2$-edge-colouring are either
  empty graphs or graphs obtained from a complete graph by deleting a matching.
\end{corollary}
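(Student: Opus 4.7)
The plan is to combine Proposition~\ref{pro:234} with the additional information provided by forbidding $\R$ and $\B$. Since any $\{\R,\B,\RR,\BB\}$-free 2-edge-colouring is in particular $\{\R,\RR,\BB\}$-free, Proposition~\ref{pro:234} already forces $G$ to be either cobipartite or a cluster. Moreover, forbidding both $\R$ and $\B$ implies that $G$ contains no induced $\overline{P_3}$ at all, since the single edge of such an induced subgraph would be coloured either red or blue, producing $\R$ or $\B$. Hence $G$ is $\overline{P_3}$-free, that is, complete multipartite.

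It then remains to intersect ``cobipartite or cluster'' with ``complete multipartite''. If $G$ is a cluster and complete multipartite, then an edge in one component together with a vertex in a different component would immediately form an induced $\overline{P_3}$, so either $G$ has no edges (and is an empty graph) or $G$ consists of a single clique $K_n$, which is $K_n$ minus the empty matching. If instead $G$ is cobipartite and complete multipartite, then $\overline{G}$ is simultaneously a disjoint union of cliques (as the complement of a complete multipartite graph) and bipartite, which forces every clique of $\overline{G}$ to have at most two vertices. Therefore the missing edges of $G$ form a matching $M$, and $G = K_n - M$.

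For the converse direction I would reuse the construction from the proof of Proposition~\ref{pro:co-bip}. The empty graph trivially admits the empty colouring. For $G = K_n - M$, choose a bipartition $(A,B)$ of $V(G)$ into two cliques (possible since $\overline{G}=M$ is bipartite), colour every edge with both endpoints in the same part blue, and every edge between $A$ and $B$ red. The verification is where the actual work lies: because $A$ and $B$ are cliques of $G$, any three same-part vertices induce a triangle, precluding any blue induced $P_3$ or $\overline{P_3}$; and because every non-edge of $G$ is a matching pair straddling $A$ and $B$, no two matched vertices can share a third non-neighbour, which rules out the remaining red patterns. The only mildly delicate point is making sure the forward case analysis really exhausts all configurations; everything else is a routine check of the four patterns against the proposed colouring.
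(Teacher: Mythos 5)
Your proof is correct and follows essentially the route the paper intends: the corollary is stated without proof immediately after Proposition~\ref{pro:234}, and the implicit derivation (made explicit in the proof of Claim~\ref{cla:second}) is exactly your argument --- forbidding $\R$ and $\B$ forces $G$ to be complete multipartite, and intersecting that with ``co-bipartite or cluster'' yields empty graphs and complete graphs minus a matching. Your converse via the two-clique colouring is the same construction as in Proposition~\ref{pro:234} (up to swapping colours), so the whole proposal matches the paper's approach.
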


\begin{proposition}
\label{pro:235}
  For a graph $G$,  the following statements are equivalent:
  \begin{enumerate}
    \item $G$ admits a $\{\B,\RR,\RB\}$-free $2$-edge-colouring.
    \item $G$ admits a $\{\R,\BB,\RB\}$-free $2$-edge-colouring.
    \item $G$ is a $\{K_1 + P_3\}$-free cograph.
    \item $G$ is a join of clusters.
  \end{enumerate}
\end{proposition}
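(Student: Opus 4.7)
The equivalence (1)$\Leftrightarrow$(2) is immediate by swapping the two colours in any forbidden pattern. For (3)$\Leftrightarrow$(4) I would argue via complements. Recall that $\overline{K_1+P_3}$ is the paw, so $G$ is $(K_1+P_3)$-free if and only if $\overline{G}$ is paw-free. By Olariu's theorem (already invoked in the proof of \cref{pro:23}), every component of a paw-free graph is either triangle-free or complete multipartite. If in addition $G$ is a cograph, then so is $\overline{G}$, and a triangle-free cograph is a $\{P_4,K_3\}$-free graph, hence complete bipartite, which is a special case of complete multipartite. Thus every component of $\overline{G}$ is complete multipartite, which is exactly saying that $G$ is a join of clusters. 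The converse is straightforward: a join of clusters is a cograph by construction, and any four vertices inducing $K_1+P_3$ would have to lie in a single cluster (the isolated vertex rules out separation across different parts of the join), but a cluster is $P_3$-free.

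For (4)$\Rightarrow$(1) I would give an explicit colouring. Write $G$ as the join of clusters $C_1,\dots,C_k$, and colour an edge red if its endpoints lie in the same $C_i$, blue otherwise. No $\RR$ can appear because, inside a cluster, two red edges sharing an endpoint lie in a common clique, so the third edge is also present. No $\B$ can appear because the blue edge $uv$ has $u\in C_i$, $v\in C_j$ with $i\neq j$, and any third vertex is adjacent (via the join) to at least one of $u,v$. No $\RB$ can appear because a red edge $uv$ has $u,v\in C_i$, and a blue neighbour $w$ of $v$ lies in some $C_j$ with $j\neq i$, forcing $uw$ to be an edge.

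The interesting direction is (1)$\Rightarrow$(4). Suppose $G$ admits a $\{\B,\RR,\RB\}$-free $2$-edge-colouring. Since this colouring is in particular $\{\B,\RR\}$-free, \cref{pro:23} gives that $G$ is $\{K_1+P_3,C_5\}$-free, in particular $(K_1+P_3)$-free. In view of (3)$\Leftrightarrow$(4) it then suffices to show that $G$ is a cograph, i.e.\ $P_4$-free. Suppose for contradiction that $a,b,c,d$ induce a $P_4$ with edges $ab,bc,cd$ and non-edges $ac,bd,ad$. The induced $P_3$'s $a,b,c$ and $b,c,d$ have every edge coloured, and since $\RR$ and $\RB$ are forbidden, all three edges $ab,bc,cd$ must be blue. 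But then $\{a,b,d\}$ induces $\overline{P_3}$ with blue edge $ab$ and isolated vertex $d$, which is a copy of $\B$, contradicting the hypothesis. Hence $G$ is $P_4$-free and $(K_1+P_3)$-free, so by (3)$\Leftrightarrow$(4) $G$ is a join of clusters.

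The main obstacle is the $P_4$-freeness step, but as shown it is a very direct case-check: the $\{\RR,\RB\}$-free condition forces every induced $P_3$ to be monochromatically blue, and then the $\B$-free condition immediately rules out the long $P_4$. Once $P_4$-freeness is in hand, the structural characterization reduces to a routine application of Olariu's paw-free theorem to the complement.
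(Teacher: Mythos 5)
Your proof is correct and follows essentially the same route as the paper: $(1)\Rightarrow(3)$ via \cref{pro:23} plus the observation that $P_4$ admits no $\{\B,\RR,\RB\}$-free colouring, $(3)\Leftrightarrow(4)$ via Olariu's paw-free theorem applied to the complement, and $(4)\Rightarrow(1)$ via the colouring that makes intra-cluster edges red and join edges blue. The only difference is that you spell out the details (the $P_4$ case-check and the verification of the explicit colouring) that the paper leaves as easy observations.
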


\begin{proof}
  Suppose that a graph $G$ satisfies the first statement. By \cref{pro:23} $G$
  is $(K_1+P_3)$-free.  Also, it is not hard to observe that $P_4$ does not
  admit a $\{\B,\RR,\RB\}$-free $2$-edge-colouring. Thus, the first item implies
  the third one.

  Now, suppose that $G$ satisfies the third statement.   Since cographs are
  closed under complementation, $\overline{G}$ is a paw-free cograph.  But
  bipartite cographs are complete bipartite graphs, so, as in the proof of
  \Cref{pro:234}, $G$ is a join of clusters.

  For the remaining implication, colour the edges in each cluster red, and all
  the remaining edges blue.
\end{proof}

\begin{corollary}
\label{cor:0235}
  For a graph $G$,  the following statements are equivalent:
  \begin{enumerate}
    \item $G$ admits a $\{\IS,\B,\RR,\RB\}$-free $2$-edge-colouring.
    \item $G$ admits a $\{\IS,\R,\BB,\RB\}$-free $2$-edge-colouring.
    \item $G$ is a $3K_1$-free cograph.
    \item $G$ is a join of clusters, where each cluster contains at most two
      cliques.
    \item $G$ is a semicircular graph.
  \end{enumerate}
\end{corollary}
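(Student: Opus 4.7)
The plan is to reduce each condition to the previous one, using Proposition~\ref{pro:235} to handle most of the equivalences, and then connect (4) with (5) through a direct geometric construction. Items (1) and (2) are equivalent by the colour-swap observation at the start of \cref{sec:simple-fam}, so I would only work with (1).

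To prove (1)$\Leftrightarrow$(3), note that any $\{\IS,\B,\RR,\RB\}$-free colouring is in particular $\{\B,\RR,\RB\}$-free, so by \cref{pro:235}, $G$ is a $(K_1+P_3)$-free cograph, and the presence of $\IS$ in the forbidden set additionally forces $G$ to be $3K_1$-free. Conversely, if $G$ is a $3K_1$-free cograph, then $G$ is also $(K_1+P_3)$-free since $3K_1$ is an induced subgraph of $K_1+P_3$; hence \cref{pro:235} supplies a $\{\B,\RR,\RB\}$-free $2$-edge-colouring of $G$, and the $3K_1$-freeness guarantees that the same colouring is $\IS$-free. The equivalence (3)$\Leftrightarrow$(4) then follows from \cref{pro:235}: a cograph satisfies (3) iff it is a $(K_1+P_3)$-free cograph that is additionally $3K_1$-free, iff it is a join of clusters in which no cluster contains three pairwise disjoint cliques (three vertices chosen from three distinct cliques of one factor would form an induced $3K_1$, while vertices from distinct factors are all adjacent through the join).

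The main work lies in (4)$\Leftrightarrow$(5). For the forward direction, I would give an explicit semicircular representation. Write $G = G_1 + \dots + G_k$ with each $G_i$ either a single clique $K_m$ or a union $K_{m_1}\cup K_{m_2}$ of two cliques, and choose angles $\theta_1,\dots,\theta_k \in \mathbb{R}/2\pi\mathbb{Z}$ that are pairwise non-antipodal. For a single-clique factor $G_i=K_m$, assign all $m$ vertices the open semicircle $S_{\theta_i}=(\theta_i-\pi/2,\theta_i+\pi/2)$; for a two-clique factor $K_{m_1}\cup K_{m_2}$, assign one clique's vertices the semicircle $S_{\theta_i}$ and the other clique's vertices the antipodal semicircle $S_{\theta_i+\pi}$ (perturbing within the same $\theta$ slightly so all semicircles are distinct). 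A short computation shows that two open semicircles $S_\alpha, S_\beta$ are disjoint iff $\alpha$ and $\beta$ are antipodal, so the adjacencies produced by this representation match $G$ exactly. For the converse, given a semicircular representation, group vertices by the (approximate) centre $\theta$ of their semicircles and pair up antipodal groups; each antipodal pair contributes a factor $K_{|V_\theta|}\cup K_{|V_{\theta+\pi}|}$, each unpaired group contributes a factor $K_{|V_\theta|}$, and non-antipodality between distinct groups means the graph is the join of these factors, yielding (4).

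The main obstacle is being careful about corner cases in the semicircular argument, especially ensuring that distinct vertices can be represented by distinct semicircles (handled by an arbitrarily small perturbation within each $\theta$-group) and that the characterisation of disjointness of open semicircles is stated cleanly (two distinct open semicircles $S_\alpha$ and $S_\beta$ are disjoint iff $\alpha\equiv\beta+\pi\pmod{2\pi}$). Everything else is bookkeeping built on \cref{pro:235}.
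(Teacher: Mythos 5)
Your handling of the equivalence of (1), (2) and (3) matches the paper's: the paper derives it from \cref{lem:intersection} together with \cref{pro:235}, which is exactly the content of your first two paragraphs. Where you genuinely diverge is in (3)$\Leftrightarrow$(4)$\Leftrightarrow$(5): the paper simply cites Theorem~8 of \cite{bonomoJGT61} for the equivalence of the last three items, whereas you prove (3)$\Leftrightarrow$(4) directly from the join-of-clusters description in \cref{pro:235} and give an explicit, self-contained semicircular representation for (4)$\Leftrightarrow$(5). Your route buys a proof that does not depend on the external reference, at the cost of having to verify the geometry of open semicircles; the key fact you rely on --- two open semicircles are disjoint if and only if their centres are antipodal --- is correct, and both directions of your (4)$\Leftrightarrow$(5) argument go through on that basis.

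One step as written would fail, though it is easily repaired. You propose ``perturbing within the same $\theta$ slightly so all semicircles are distinct,'' and you repeat this in your closing paragraph as the way to guarantee distinct representatives. But disjointness of open semicircles requires \emph{exact} antipodality of centres: if a vertex of the first clique of a two-clique factor gets centre $\theta_i+\epsilon$ and a vertex of the second clique gets centre $\theta_i+\pi+\delta$ with $\epsilon\neq\delta$, their semicircles intersect, and the representation wrongly makes these two vertices adjacent. In fact no representation by pairwise distinct semicircles exists for a factor $K_{m_1}\cup K_{m_2}$ with $m_1,m_2\ge 2$, precisely because all semicircles of one clique must be simultaneously antipodal to all semicircles of the other. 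The fix is to drop the perturbation entirely and allow repeated semicircles, which the intersection-graph definition of semicircular graphs permits (and must permit, since $2K_2$ is semicircular). With that correction your argument is sound.
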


\begin{proof}
    \cref{lem:intersection,pro:235} imply the equivalence between the first
  three statements. The equivalence between the last three statements is stated
  in \cite{bonomoJGT61} (Theorem 8).
\end{proof}

\begin{theorem}
\label{thm:two-edges}
  The following are all the graph classes expressible by a set of
  $2$-edge-coloured graphs on three vertices and at most $2$ edges:
  \begin{enumerate}
    \item All graphs, expressed e.g.\ by $\{\R\}$.
    \item Complete multipartite graphs.
    \item Clusters, expressed by $\{\RR,\BB,\RB\}$.
    \item $3K_1$-free graphs, expressed by $\{\IS\}$.
    \item Complete graphs minus a matching, expressed by $\{\IS, \R, \B\}$. 
    \item Independent sets and complete graphs, expressed by $\{\R, \B, \RR,
        \BB, \RB\}$. 
    \item Disjoint union of at most two cliques, expressed by $\{\IS,\RR,\BB,
        \RB\}$.
    \item Complete graphs and $K_2$, expressed by $\{\IS,\R,\B,\RR,\BB,\RB\}$.
    \item Independent sets and complete graphs minus a matching, expressed by
        the set  $\{\R, \B,\allowbreak\RR,\allowbreak\BB\}$.
    \item $\{K_1+P_3, C_5\}$-free graphs, expressed by $\{\B,\RR\}$.
    \item $\{3K_1,C_5\}$-free graphs, expressed by $\{\IS,\B,\RR\}$.
    \item Join of clusters, expressed by $\{\B,\RR,\RB\}$.
    \item Join of clusters, where each cluster contains at most two cliques,
        expressed by  $\{\IS,\B,\allowbreak\RR,\RB\}$.
    \item Co-bipartite graphs and clusters, expressed by $\{\B,\RR,\BB\}$. 
    \item Co-bipartite graphs, expressed by $\{\IS,\B,\RR\}$.
    \item Elementary graphs, expressed by $\{\RR,\BB\}$.
  \end{enumerate}
\end{theorem}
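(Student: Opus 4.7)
The plan is to enumerate all $2^6 = 64$ subsets of $\{\IS,\R,\B,\RR,\BB,\RB\}$ and to match each one with one of the sixteen classes in the statement, deferring the combinatorial work to propositions already established. I would start by exploiting the colour involution $\R\leftrightarrow\B$, $\RR\leftrightarrow\BB$ (with $\IS$ and $\RB$ fixed), which preserves the expressed class and roughly halves the number of subsets that need to be examined.

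Next I would peel off three layers of easy reductions. First, Lemma~\ref{lem:trivial-constraint} pinpoints every $\calF$ that expresses all graphs: exactly those lacking $\IS$ and lacking a monochromatic pattern of some colour. Second, Lemma~\ref{lem:intersection} applied with $\calH=\{3K_1\}$ reduces every $\calF$ containing $\IS$ to the corresponding $\IS$-free subset by intersecting its expressed class with the class of $3K_1$-free graphs. Third, Remark~\ref{rmk:trivial} absorbs the residual trivial subsets in one stroke.

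What remains is a small grid of non-trivial $\IS$-free subsets, each containing at least one pattern from $\{\R,\RR\}$ and at least one from $\{\B,\BB\}$, possibly together with $\RB$. The ranges of subsets appearing in earlier propositions match exactly the cells of this grid: Proposition~\ref{pro:comp-mult} covers every $\calF$ with $\{\R,\B\}\subseteq\calF$; Proposition~\ref{pro:cluster} covers $\{\RR,\BB,\RB\}\subseteq\calF$; Propositions~\ref{pro:23},~\ref{pro:234}, and~\ref{pro:235} cover $\{\B,\RR\}\subseteq\calF$, stratified by whether $\BB$ or $\RB$ is also present; and Observation~\ref{obs:comp-indep} handles the saturated case. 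Reintroducing $\IS$ and invoking Propositions~\ref{pro:comp-minus-match},~\ref{pro:two-cliques} and~\ref{pro:co-bip}, together with Corollaries~\ref{cor:023},~\ref{cor:1234}, and~\ref{cor:0235}, accounts for the remaining classes (complete graphs minus a matching, $\{3K_1,C_5\}$-free graphs, semicircular graphs, co-bipartite graphs, and so on). A few intermediate subsets such as $\{\R,\B,\RB\}$ or $\{\R,\B,\RR,\RB\}$ are not covered verbatim by a named proposition; for these, a short direct check shows that adjoining $\RB$ to a set whose witness colouring is monochromatic (and therefore automatically avoids $\RB$) does not change the expressed class.

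The main obstacle is bookkeeping rather than any new combinatorial insight: making sure each of the 64 subsets lands in exactly one of the sixteen classes, that no new class slips through, and that every one of the sixteen classes in the statement is realised by the concrete witness set quoted there. Beyond the propositions of Sections~\ref{sec:simple-fam} and~\ref{sec:at-most-two}, no additional machinery should be required; the theorem is best presented as a consolidation-and-certification statement, ideally accompanied by a table listing the 64 subsets alongside their expressed classes and the corresponding references.
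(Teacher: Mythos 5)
Your proposal is correct and follows essentially the same route as the paper: exploit the red/blue involution, use Lemma~\ref{lem:intersection} to absorb $\IS$ (and the pair $\{\R,\B\}$) by intersecting with $3K_1$-free (resp.\ complete multipartite) graphs, dispose of the trivial sets via Lemma~\ref{lem:trivial-constraint}/Remark~\ref{rmk:trivial}, and defer the surviving handful of cases to Propositions~\ref{pro:23}--\ref{pro:235} and their corollaries. The paper merely packages this bookkeeping as Lemma~\ref{lem:two-edges} (reducing to the eleven non-singleton subsets of $\{\R,\RR,\BB,\RB\}$, tabulated in Tables~\ref{tab:two-edges-1} and~\ref{tab:two-edges-2}), so no substantive difference remains.
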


Before we delve into the proof of this result, we shall obtain the following
lemma, helping us to cut the number of cases we need to process from 64 to 11.

\begin{lemma}
\label{lem:two-edges}
  If for each non-singleton subset $\calF \subseteq \{\R,\RR,\BB,\RB\}$, the
  class expressed by $\calF$ is listed in \cref{thm:two-edges}, then for each
  $\calF'$ such that $\calF' \subseteq \{\R,\RR,\BB,\RB,\B,\IS\}$, the class
  expressed by $\calF'$ is listed in \cref{thm:two-edges}.
\end{lemma}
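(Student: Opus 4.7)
The plan is to reduce the problem to the $11$ non-singleton non-empty subsets of $\{\R, \RR, \BB, \RB\}$ given by the hypothesis, using two tools: the colour-swap symmetry $\sigma$ that exchanges red and blue (and therefore swaps $\R \leftrightarrow \B$ and $\RR \leftrightarrow \BB$, while fixing $\RB$ and $\IS$), and \cref{lem:intersection} applied with $\calH = \{3K_1\}$ to strip $\IS$ from $\calF'$ at the cost of intersecting the expressed class with the class of $3K_1$-free graphs.

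First I dispose of the trivial cases: the empty set and every singleton from $\{\R, \B, \RR, \BB, \RB\}$ express the class of all graphs, witnessed by a monochromatic colouring, while $\{\IS\}$ expresses the $3K_1$-free graphs (item 4). For the remaining $\calF'$, if $\B \in \calF'$ and $\R \notin \calF'$ then $\sigma(\calF')$ expresses the same class as $\calF'$ and contains $\R$ in place of $\B$. Hence it suffices to handle $\calF'$ satisfying either $\calF' \subseteq \{\R, \RR, \BB, \RB, \IS\}$, or $\{\R, \B\} \subseteq \calF'$.

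In the first situation, if $\IS \in \calF'$ then \cref{lem:intersection} tells us that $\calF'$ expresses the intersection of the $3K_1$-free graphs with the class expressed by $\calF' \setminus \{\IS\} \subseteq \{\R, \RR, \BB, \RB\}$, which is listed by hypothesis (or is trivial). A short inspection confirms that the intersection is itself listed: for example, the $3K_1$-free clusters (from $\{\IS, \RR, \BB, \RB\}$) are precisely the disjoint unions of at most two cliques, matching item 7.

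The main obstacle is the case $\{\R, \B\} \subseteq \calF'$, where $\sigma$ offers nothing and I must treat the $16$ subcases parametrised by $\calF' \cap \{\RR, \BB, \RB, \IS\}$. The key observation is that $\{\R, \B\}$-freeness already forces complete multipartite structure (\cref{pro:comp-mult}), and on such graphs any monochromatic colouring avoids all of $\R, \B, \RR, \BB, \RB$ whenever one of the two colours goes unused. Consequently, unless \emph{both} $\RR$ and $\BB$ belong to $\calF'$, the additional constraints are redundant and $\calF'$ still expresses the complete multipartite graphs (item 2), or, if $\IS$ is also present, the complete graphs minus a matching (item 5). When $\{\R, \B, \RR, \BB\} \subseteq \calF'$, \cref{cor:1234} identifies the class as independent sets together with complete graphs minus a matching (item 9); adding $\RB$ forces a $P_3$-free colouring and collapses the class to independent sets together with complete graphs (item 6); and adding $\IS$ to either of the last two cases intersects with the $3K_1$-free graphs to yield items 5 or 8. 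All resulting classes appear in \cref{thm:two-edges}, which completes the proof.
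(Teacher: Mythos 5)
Your proposal is correct and follows the paper's overall skeleton --- colour symmetry plus \cref{lem:intersection} to strip $\IS$ --- but it diverges in how the case $\{\R,\B\}\subseteq\calF'$ is handled. The paper applies \cref{lem:intersection} a second time, with $\calH=\{\overline{P_3}\}$, reducing to the intersection of complete multipartite graphs with the class expressed by $\calF'\setminus\{\R,\B\}$, and then verifies in \cref{cla:second} that intersecting each of the sixteen listed classes with the complete multipartite graphs again lands in the list. You instead argue directly: since $\R,\B\in\calF'$ forces $\overline{P_3}$-freeness, and a monochromatic colouring of a complete multipartite graph can only violate the monochromatic $P_3$ in the colour it actually uses, the only non-degenerate subcases are those with $\{\RR,\BB\}\subseteq\calF'$, which you dispatch via \cref{cor:1234} and \cref{obs:comp-indep}. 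This is a genuine shortcut: it avoids computing intersections of classes such as elementary graphs or joins of clusters with the complete multipartite graphs, which the paper's second claim must do. The one caveat is that your key sentence is stated imprecisely: read literally, ``any monochromatic colouring avoids all of $\R,\B,\RR,\BB,\RB$'' is false (the all-red colouring of $K_{2,2}$ contains $\RR$); what you mean, and what the subsequent ``consequently'' actually uses, is that the monochromatic colouring in colour $c$ avoids every one of those five patterns except ${}^{c}P_3$, so some monochromatic colouring works unless both $\RR$ and $\BB$ are forbidden. With that rephrasing the argument is sound. Your treatment of the $\IS$-stripping step is also terser than the paper's first claim (you verify only one representative intersection where the paper runs through the list), but the remaining checks are routine and of the same nature as the example you give.
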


\begin{proof}
  By symmetry, if every graph class expressed by a set $\calF \subseteq
  \{\R,\RR,\BB,\RB\}$ is listed in \cref{thm:two-edges}, then every graph class
  expressed by a set $\calF' \subseteq\{\B, \RR,\BB,\RB\}$ is listed in
  \cref{thm:two-edges}. By \cref{lem:intersection}, the class expressed by a set
  $\calF$ such that $\IS\in \calF$, is the intersection of $3K_1$-free graphs
  and the class expressed by $(\calF-\{\IS\})$. Thus, the fact that every graph
  class expressed by a set $\calF \subseteq \{\IS,\R,\RR,\BB,\RB\}$ is listed in
  \cref{thm:two-edges} will follow from the following claim --- and by symmetry,
  we will also conclude that for every graph class expressed by a set $\calF
  \subseteq \{\IS,\B,\RR,\BB,\RB\}$.

\begin{claim}
Let $\mathcal{C}$ be a class listed in \cref{thm:two-edges}. If $\mathcal{C}'$
is the intersection of $\mathcal{C}$ and $3K_1$-free graphs, then $\mathcal{C}'$
is also listed in \cref{thm:two-edges}.
\end{claim}

\begin{proof}
From 1--16, the intersections of 1,  3--9, 11, and 13--15 with $3K_1$-free
graphs are easy to obtain and to notice that these are in fact, listed in
\cref{thm:two-edges}. Also, it is not hard to notice that the intersection of
2.\ complete multipartite graphs and $3K_1$-free graphs is 5.\ complete graphs
minus a matching. Naturally, the intersection of 10.\ $\{K_1+P_3,C_5\}$-free
graphs with $3K_1$-free graphs is the class 11.\ of $\{3K_1, C_5\}$-free graphs,
and similarly, the intersection of 12.\ join of clusters ($K_1+P_3$-free
cographs, by \cref{pro:235}) with $3K_1$-free cographs is the class 13.\
$3K_1$-free cographs (join of clusters, where each cluster contains at most two
cliques \cref{cor:0235}). Finally, the intersection of elementary graphs (16.)
and $3K_1$-free graphs is the class of co-bipartite graphs (\cref{cor:J0J3J4}).
This concludes the proof of the claim.
\end{proof}

It remains to prove that for each subset $\calF' \subseteq
\{\IS,\R,\B,\RR,\BB,\RB\}$ such that $\R,\B\in \calF'$, the class expressed by
$\calF'$ is listed in \cref{thm:two-edges}. Again, by \cref{lem:intersection},
the class expressed by such a set is the intersection of complete multipartite
graphs and the class expressed by $\calF-\{\R,\B\}$.

\begin{claim}\label{cla:second}
Let $\mathcal{C}$ be a class listed in \cref{thm:two-edges}. If $\mathcal{C}'$
is the intersection of $\mathcal{C}$ and complete multipartite graphs, then
$\mathcal{C}'$ is also listed in \cref{thm:two-edges}.
\end{claim}
  
\begin{proof}
From 1--16, the intersections of 1--9 with complete multipartite graphs are easy
to obtain and to notice that these are in fact, listed in \cref{thm:two-edges}.
Clearly, $K_1+P_3$, $C_5$, and $P_4$ contain an induced $\overline{P_3}$, so,
from 10--13 their intersection with complete multipartite graphs are 2, 5, 2, 5,
respectively. Also, a co-bipartite graph is a complete multipartite if and only
if it is a complete graph minus a matching; a cluster is a complete multipartite
graph if and only if and only if it is a complete graph. Thus, both
intersections of 14 and 15 with complete multipartite graphs is the class of
complete graphs minus a matching. Finally, by \cref{lem:intersection}, the
intersection of elementary graphs and complete multipartite graph is the class
expressed by $\{\R,\B,\RR,\BB\}$. By \cref{cor:1234}, these are independent sets
and complete graphs minus a matching (9 in \cref{thm:two-edges}).
\end{proof}

By \cref{cla:second} and by the previous arguments, the lemma now follows.
\end{proof}

\begin{proof}[Proof (of \cref{thm:two-edges})]
We first justificate that for each graph class $\mathcal{C}$ listed in the
statement of \cref{thm:two-edges}, the corresponding set of $2$-edge-coloured
graphs on at most three vertices indeed expresses $\mathcal{C}$. Then we proceed
to prove that each such set $\calF$ expresses one of the graph classes from the
statement.

\begin{enumerate}
  \item Collorary of \cref{lem:trivial-constraint}.
  \item Collorary of \cref{lem:intersection}.
  \item Collorary of \cref{pro:cluster}.
  \item Trivial.
  \item Collorary of \cref{pro:comp-minus-match}.
  \item Collorary of \cref{pro:comp-minus-match}.
  \item Collorary of \cref{pro:two-cliques}
  \item Implied by \cref{lem:intersection,pro:two-cliques}.
  \item Collorary of \cref{cor:1234}.
  \item Collorary of \cref{pro:23}.
  \item Collorary of \cref{cor:023}.
  \item Collorary of \cref{pro:235}.
  \item Collorary of \cref{cor:0235}.
  \item Collorary of \cref{pro:234}.
  \item Collorar of \cref{pro:co-bip}.
  \item By definition.
\end{enumerate}

For the second part of the proof, in order to avoid considering all possible
sets of graphs on three vertices with at most two edges, we shall reduce the
number of cases. First, recall that each singleton set expresses the class of
all graphs, except for $\{\IS\}$ which expresses the class of $3K_1$-free
graphs.  Both of these classes are listed in \cref{thm:two-edges}. Furthermore,
we use \cref{lem:two-edges}

Using these tools, we conclude the proof of \cref{thm:two-edges} by listing all
non-singleton subsets of $\{\R,\RR,\BB,\RB\}$, and showing that the
corresponding expressed class is listed in \cref{thm:two-edges}. We do this in
the following two tables, where the leftmost column indicates a set $\calF
\subseteq \{\R,\RR,\BB,\RB\}$, the middle column indicates the corresponding
expressed class, and the rightmost column is the corresponding reference. 

\begin{table}[ht!]
\begin{center}
  \begin{adjustbox}{max width=0.95\textwidth}
  \rowcolors{2}{white}{gray!25}  
  \begin{tabular}{| c | l | l |}
    \hline
    $\calF$ & Class expressed by $\calF$ & Reference \\ \hline \hline
    $\RR, \BB$ & Elementary graphs (16.) & By definition \\ \hline
    $\RR, \RB$ & All graphs (1.)  & \cref{lem:trivial-constraint} \\ \hline
    $\BB, \RB$ &  All graphs (1.)  & \cref{lem:trivial-constraint}  \\ \hline
    $\RR, \BB, \RB$ & Clusters (3.)  & \cref{pro:cluster} \\ \hline
  
    \end{tabular}
  \end{adjustbox}
    \caption{Classes expressed by non-singleton sets
    $\calF\subseteq\{\RR,\BB,\RB\}$.}
    \label{tab:two-edges-1}
    \end{center}
  \end{table}

\begin{table}[ht!]
\begin{center}
  \begin{adjustbox}{max width=0.9\textwidth}
  \rowcolors{2}{white}{gray!25}  
  \begin{tabular}{| c | l | l |}
    \hline
    $\calF$ & Class expressed by $\calF$ & Reference \\ \hline \hline
    $\R, \RR$ &  All graphs (1.)  & \cref{lem:trivial-constraint}  \\ \hline
    $\R, \BB$ & $\{K_1+P_3, C_5\}$-free graphs (11.)  & \cref{pro:23} \\ \hline
    $\R, \RB$ &  All graphs (1.)  & \cref{lem:trivial-constraint}   \\ \hline
    $\R, \RR, \BB$ & Co-bipartite graphs and clusters (14.) &  \cref{pro:234} \\
    \hline
    $\R, \RR, \RB$ &  All graphs (1.) & \cref{lem:trivial-constraint} \\ \hline
    $\R, \BB, \RB$ &  Join of clusters (12.) & \cref{pro:235} \\
        \hline
    $\R, \RR, \BB, \RB$ & Clusters (3.) & \cref{pro:cluster} \\ \hline

    \end{tabular}
    \end{adjustbox}
    \caption{Classes expressed by non-singleton sets $\R\in \calF\subseteq\{\R,
    \RR,\BB,\RB\}$.}
    \label{tab:two-edges-2}
    \end{center}
    \end{table}

Finally, the proof of \cref{thm:two-edges} now follows from \cref{lem:two-edges}
and \cref{tab:two-edges-1,tab:two-edges-2}.
\end{proof}


\section{The \texorpdfstring{$\calF$}{F}-free colouring problem}
\label{sec:complex}

In this section, we study the $\calF$-free colouring problem and classify its
complexity for a variety of sets $\calF$ of $2$-edge-coloured graphs on three
vertices. These classifications are summarized in
\cref{tab:complexity-landscape}, and we prove them in a series of lemmas.

\begin{table}[ht!]
\begin{center}
  \begin{adjustbox}{max width=0.95\textwidth}
  \rowcolors{2}{white}{gray!25}  
  \begin{tabular}{|  l | c | c |}
    \hline
     Forbidden set
    & Complexity
    & Reference \\
    \hline
        $\calF$ is a trivial set
      & P
      & Remark~\ref{rmk:trivial}\\
      \hline
      $\calF$ contains $\{\R,\B\}$
      & P
      & \cref{lem:K2+K1} \\
      \hline
   $\calF$ contains $\{\IS,\RT, \BT\}$
      & P
      &  \Cref{lem:triangles+IS}\\
    \hline
     $\calF$ contains no triangle
      & P
      & \cref{lem:2-SAT} \\
      \hline
      $\calF$ contains $\{\RBB,\RRB\}$ 
      & P
      &  \cref{lem:2-SAT}\\
  \hline
    $\calF\cap {\cal T} =\{\RT,\RRB\}$
      & P
      &  \cref{lem:2-SAT} \\
  \hline
   $\calF \cap {\cal T} =\{\RT,\RBB\}$ and $\RB\in \calF$
      & P
      &  \cref{lem:linear-equations}\\
      $\calF \cap {\cal T} =\{\RT,\RBB\}$ and $\BB,\RR\in \calF$
      & P
      &  \cref{lem:linear-equations}\\
  \hline
    $\calF\subseteq \{\IS, \R, \B, \RR, \RB, \RT, \RRB\}$
      & P
      & \cref{lem:consistency}\\
      \hline  
       $\calF\subseteq \{\IS, \RR, \BB, \BT, \RT, \RBB, \RRB\}$
      & P
      &  \cref{lem:elementary-cases}\\
      \hline
    $\calF = \{\BT, \RT\}$
      & NP-complete
      & \cite{burrMRT1990} \\
      $\calF = \{\BT, \RT, \RBB\}$
      & NP-complete
      &  \Cref{lem:triangles}\\
  \hline
  
  \end{tabular}
  \end{adjustbox}
  \caption{Up to colour symmetry, the complexity landscape of the $\calF$-free
  colouring problem settled in Section~\ref{sec:complex}. In this table,  $\cal
  T$ denotes the set of $2$-edge-coloured triangles on three vertices, i.e.,
  ${\cal T}=\{\BT,\RT,\RBB,\RRB\}$.}
  \label{tab:complexity-landscape}
\end{center}
\end{table}

\begin{lemma}\label{lem:K2+K1}
    Let $\calF$ be a set of $2$-edge-coloured graphs on three vertices. If
    $\B,\R\in \calF$, then the $\calF$-free colouring problem is polynomial-time
    solvable.
\end{lemma}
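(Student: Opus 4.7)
The key observation is that if both $\R$ and $\B$ lie in $\calF$, then the underlying graph of any $G$ admitting an $\calF$-free $2$-edge-colouring must be $\overline{P_3}$-free: any induced copy of $\overline{P_3}$ in $G$ consists of a single edge plus an isolated vertex, and whichever colour we assign to that edge produces an induced copy of either $\R$ or $\B$. Since $\overline{P_3}$-free graphs are precisely the complete multipartite graphs (equivalently, $\overline{G}$ is a disjoint union of cliques), this gives a polynomial-time necessary condition that can be checked up front.

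The algorithm therefore first tests whether $G$ is complete multipartite and rejects if it is not. Otherwise, with the partition $V_1,\dots,V_k$ of $V(G)$ identified, \cref{lem:intersection} reduces the original task to deciding whether $G$ admits a $2$-edge-colouring avoiding the residual family $\calF':=\calF\setminus\{\R,\B\}\subseteq\{\IS,\RR,\BB,\RB,\RT,\BT,\RRB,\RBB\}$. So the plan is to handle this residual problem on complete multipartite graphs in polynomial time.

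The reduced problem is tractable because, in a complete multipartite graph, the only $3$-vertex induced subgraphs that can arise are $3K_1$, $P_3$, and $K_3$. The constraint $\IS$ simply forces each part to have size at most $2$ (a linear-time check); the path patterns $\RR,\BB,\RB$ impose $2$-clauses on per-edge colour variables; and the triangle patterns impose local constraints on at most three edge variables. The main potential obstacle — that a naive encoding of the triangle patterns is $3$-SAT — dissolves once we exploit this restricted structure: for every such $\calF'$, either the situation is already covered by \cref{pro:comp-mult,pro:comp-minus-match} (yielding a direct structural description and trivial recognition), or the residual CSP falls within the uniform reduction to tractable boolean CSPs developed in \cref{sec:uniform}. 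Either route produces a polynomial-time algorithm, completing the proof.
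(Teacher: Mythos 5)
Your first half matches the paper's argument: with $\R,\B\in\calF$, every graph admitting an $\calF$-free colouring is $\overline{P_3}$-free and hence complete multipartite, and by \cref{lem:intersection} the problem reduces to deciding whether a complete multipartite graph admits an $\calF'$-free colouring for $\calF'=\calF\setminus\{\R,\B\}$. The gap is in your final step. The claimed dichotomy --- that every residual $\calF'$ is either covered by \cref{pro:comp-mult,pro:comp-minus-match} or ``falls within the uniform reduction to tractable boolean CSPs'' of \cref{sec:uniform} --- is false. Take $\calF=\{\R,\B,\RT,\BT\}$: it is covered by neither proposition (each of those requires $\calF$ to omit one of $\RT,\BT$), and the boolean structure $\mathbb B_{\{\RT,\BT\}}$ is preserved by none of Schaefer's operations (\cref{tab:preserve-K3}), so the target of the uniform reduction is the NP-complete ``no monochromatic triangle'' problem (cf.\ \cite{burrMRT1990} and \cref{lem:triangles}). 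The same failure occurs for $\{\R,\B,\RT,\BT,\RRB\}$, whose associated boolean CSP is positive 1-in-3-SAT. So for precisely the residual sets containing both a red-leaning and a blue-leaning triangle pattern --- the only genuinely nontrivial cases --- neither of your two routes applies.

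What actually closes these cases, and what the paper's (admittedly terse) proof is gesturing at, is the complete multipartite structure itself rather than the CSP machinery: triangles of $G$ correspond to triples of distinct parts, so a set of forbidden triangle colourings with no monochromatic escape imposes a Ramsey-type bound on the \emph{number of parts} (e.g.\ forbidding both $\RT$ and $\BT$ forces at most $5$ parts, since any $6$ parts contain a $K_6$ and hence a monochromatic triangle, while for at most $5$ parts one colours edges according to the pair of parts they join using a monochromatic-triangle-free colouring of $K_5$); similarly the $P_3$-patterns bound the part sizes or force the graph to be edgeless or complete. The expressed class is then ``complete multipartite with boundedly many parts, each of bounded size'' up to a few degenerate exceptions, which is trivially recognizable in polynomial time. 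You need to make this counting argument explicit for the residual sets not covered by your two routes; as written, your proof does not handle them.
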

\begin{proof}
    Since $\B,\R\in \calF$, any graph that admits an $\calF$-free colouring is a
    complete multipartite graph. The remaining forbidden coloured graphs in
    $\calF$ determine the maximum number of vertices in each class of the
    partition, and the maximum number of classes. It is straightforward to
    observe that in any such case, the $\calF$-free colouring problem can be
    solved in polynomial time.
\end{proof}

The following lemma sums up some of the easy cases which mostly follow from the
previous characterizations we proved.

\begin{lemma}\label{lem:elementary-cases}
    Let $\calF$ be a set of $2$-edge-coloured graphs on three vertices. If
    $\calF \subseteq \{\IS,\BB,\RR,\allowbreak\BT,\RT,\RBB,\RRB\}$, then the
    $\calF$-free colouring problem is polynomial-time solvable.
\end{lemma}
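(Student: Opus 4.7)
The plan is a case analysis on $\calF\subseteq\{\IS,\BB,\RR,\BT,\RT,\RBB,\RRB\}$ that leverages the structural characterisations of \cref{sec:elementary,sec:further-elementary}. By \cref{lem:intersection}, I may first eliminate $\IS$: the class expressed by $\calF$ is the intersection of $3K_1$-free graphs (testable in polynomial time) with the class expressed by $\calF\setminus\{\IS\}$, so it suffices to treat $\calF\subseteq\{\BB,\RR,\BT,\RT,\RBB,\RRB\}$. Colour symmetry then halves the remaining cases.

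I would split on whether $\{\RR,\BB\}\subseteq\calF$. If not, say $\BB\notin\calF$, then the only patterns in $\calF$ that can be induced by a monochromatic all-blue colouring are $\IS$ and $\BT$, and every sub-case in this range either is trivial in the sense of \cref{rmk:trivial}, or is covered by \cref{lem:trivial-constraint,lem:triangles+IS,lem:K2+K1}, yielding polynomial-time recognition. If $\{\RR,\BB\}\subseteq\calF$, then every $\calF$-free colouring is elementary, so the input must be an elementary graph by \cref{Gal(G)}, which is testable in polynomial time via bipartiteness of $Gal(G)$.

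In the elementary branch I would sub-case on $\calF\cap\{\BT,\RT,\RBB,\RRB\}$ and invoke the corresponding structural characterisation: \cref{prop:J3J4J8J9} gives line graphs of bipartite graphs; \cref{J3J4J7J8J9} gives line graphs of incidence graphs; \cref{prop:BB-RR-BT-RBB} gives disjoint unions of the latter with diamonds; \cref{BB-RR-BT-RRB} gives $X$-augmentations of line graphs of incidence multigraphs; \cref{RR-BB-RRB} gives skew-augmentations of line graphs of bipartite multigraphs; and \cref{prop:BB-RR-BT-RT,prop:BB-RR-BT-RT-RBB} describe the remaining two cases as explicit finite unions of small-graph components. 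Each target class admits polynomial-time recognition: line graphs of (incidence) bipartite multigraphs via Whitney/Lehot-style algorithms; classes whose components lie in a fixed finite list by direct inspection; and the two augmentation classes by reconstructing the canonical elementary colouring from $Gal(G)$, contracting the twin-like equivalence blocks identified in the proofs of \cref{BB-RR-BT-RRB,RR-BB-RRB}, and recognising the quotient via standard line-graph algorithms.

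The main obstacle will be the exhaustive case enumeration together with verifying that the augmentation classes are polynomial-time recognisable; the latter follows from the constructive twin-contraction argument above, which uniquely determines the canonical quotient of an elementary graph from its input, so the line-graph recognition step can be applied directly to certify membership.
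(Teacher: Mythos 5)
Your treatment of the branch $\{\RR,\BB\}\subseteq\calF$ is essentially the paper's entire proof: the paper disposes of $\IS$ exactly as you do (test $3K_1$-freeness, then recurse on $\calF\setminus\{\IS\}$ via \cref{lem:intersection}) and then simply cites the structural characterizations of Sections~\ref{sec:elementary} and~\ref{sec:further-elementary}. Your enumeration of the triangle subsets against \cref{prop:J3J4J8J9,J3J4J7J8J9,prop:BB-RR-BT-RBB,BB-RR-BT-RRB,RR-BB-RRB,prop:BB-RR-BT-RT,prop:BB-RR-BT-RT-RBB,prop:346789} is complete, and your discussion of how each target class is recognized is detail the paper leaves implicit. (For the two augmentation classes, a cleaner route than reconstructing a canonical colouring from $Gal(G)$ is to use the equivalences inside \cref{BB-RR-BT-RRB} and \cref{RR-BB-RRB} to replace the triangle subset of $\calF$ by one that \cref{lem:2-SAT} or \cref{lem:linear-equations} handles, or to note that $\mathbb B_\calF$ is preserved by the maximum operation; but your plan is workable.)

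The genuine gap is in the other branch. Your claim that every sub-case with $\BB\notin\calF$ (or, symmetrically, $\RR\notin\calF$) ``is trivial in the sense of \cref{rmk:trivial} or is covered by'' the cited lemmas is false: $\calF=\{\BT,\RT\}$ and $\calF=\{\BT,\RT,\RBB\}$ contain neither monochromatic path, lie inside the lemma's ground set, are not trivial sets (for $G=K_3$ neither colour satisfies the definition in \cref{rmk:trivial}), meet none of the hypotheses of \cref{lem:trivial-constraint,lem:triangles+IS,lem:K2+K1}, and are NP-complete by the paper's own \cref{lem:triangles} and \cref{tab:complexity-landscape}. So this branch of your case analysis cannot close. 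What this exposes is that the hypothesis of the lemma, read literally, is too broad and contradicts \cref{lem:triangles}; the paper's proof silently restricts to the scope of its structural characterizations, namely sets with $\{\RR,\BB\}\subseteq\calF$ (cf.\ the caption of \cref{fig:landscape}). The honest fix is to flag that restriction rather than to assert tractability of the remaining sets, some of which are provably hard and others of which (e.g.\ $\{\RR,\BT\}$) the paper does not resolve at all.
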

\begin{proof}
    If $\calF\subseteq \{\BB,\RR,\BT,\RT,\RRB,\RBB\}$, then the $\calF$-free
    colouring problem is poly\-no\-mi\-al-time solvable by our structural
    characterizations in Sections~\ref{sec:elementary}
    and~\ref{sec:further-elementary} (see also \Cref{fig:landscape}). And if
    $\IS\in \calF$, we can first verify if the input graph $G$ is $\IS$-free,
    and we then run the algorithm for the $(\calF\setminus\{\IS\})$-free
    colouring problem.
\end{proof}

The following three lemmas describe cases of $\calF$ such that the corresponding
$\calF$-free colouring problem is solvable (in polynomial time) by 2-SAT, linear
equations of  $\mathbb Z_2$, and by the so-called consistency, respectively.

\begin{lemma}\label{lem:2-SAT}
    Let $\calF$ be a set of $2$-edge-coloured graphs on three vertices. In any
    of the following cases, the $\calF$-free colouring problem reduces to
    $2$-SAT.
    \begin{itemize}
        \item $\calF$ contains no triangle.
        \item $\calF$ contains all triangles.
        \item the subset of triangles of $\calF$ is $\{\RT,\RRB\}$.
        \item the subset of triangles of $\calF$ is $\{\BT,\RBB\}$.
    \end{itemize}
\end{lemma}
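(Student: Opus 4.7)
The plan is to give a uniform polynomial-time reduction to 2-SAT. For each edge $e\in E(G)$, introduce a boolean variable $x_e$ with the intended meaning ``$e$ is coloured red''. Every pattern in $\calF$ will be translated into a set of clauses of size at most two. The exceptions are the empty pattern $\IS$, which carries no information about the colouring, and (in case (ii)) the situation where all four triangle patterns are in $\calF$; both are handled by a preprocessing step. Specifically, if $\IS\in\calF$ we first verify that $G$ is $3K_1$-free, and if all four triangles are in $\calF$ we additionally verify that $G$ is triangle-free. If either check fails, the answer is ``no'' regardless of the colouring.

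For the remaining patterns on at most two edges, the encoding is natural. If $\R\in\calF$, then for each triple $\{u,v,w\}$ with $vw\in E(G)$ and $u$ non-adjacent to both $v$ and $w$, we add the unit clause $\bar x_{vw}$; the case $\B\in\calF$ is symmetric. If $\RR\in\calF$, for each induced $P_3$ of $G$ with middle vertex $v$ and endpoints $u,w$ we add $\bar x_{uv}\lor \bar x_{vw}$, and dually $x_{uv}\lor x_{vw}$ when $\BB\in\calF$. If $\RB\in\calF$, for each induced $P_3$ we add the two clauses $x_{uv}\lor \bar x_{vw}$ and $\bar x_{uv}\lor x_{vw}$, which jointly express $x_{uv}=x_{vw}$. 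By design, every satisfying assignment corresponds to a colouring avoiding these patterns on three-vertex subsets with at most two edges, and conversely.

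The main step is the treatment of triangle patterns in cases (iii) and (iv). In case (iii), where $\calF\cap\{\RT,\BT,\RRB,\RBB\}=\{\RT,\RRB\}$, a triangle of $G$ is $\calF$-free precisely when it has at most one red edge, which is equivalent to requiring that every two of its edges include at least one blue edge. Accordingly, for each triangle $\{u,v,w\}$ of $G$ we add the three binary clauses $\bar x_{uv}\lor \bar x_{vw}$, $\bar x_{vw}\lor \bar x_{uw}$ and $\bar x_{uv}\lor \bar x_{uw}$. Case (iv) is symmetric, using the corresponding positive literals. The resulting 2-SAT instance has $O(|E(G)|)$ variables and $O(|V(G)|^3)$ clauses and, by construction, is satisfiable iff $G$ admits an $\calF$-free 2-edge-colouring. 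The correctness follows clause by clause; the one point I would check most carefully is the case analysis on triangle colourings in (iii) and (iv), to confirm that ``at most one red (resp.\ blue) edge per triangle'' indeed captures $\calF$-freeness on triangles.
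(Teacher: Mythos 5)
Your proposal is correct and follows essentially the same route as the paper: one boolean variable per edge, binary clauses for each occurrence of a forbidden pattern, preprocessing for the edgeless pattern and for the all-triangles case, and the observation that forbidding $\{\RT,\RRB\}$ (resp.\ $\{\BT,\RBB\}$) is equivalent to ``at most one red (resp.\ blue) edge per triangle'', which is a conjunction of $2$-clauses over the edge pairs of each triangle. The only differences are cosmetic (you take red as \texttt{true} where the paper takes blue, and you spell out the clauses for the two-edge patterns that the paper leaves as ``straightforward''); your case analysis on triangles checks out.
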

\begin{proof}
    The case when $\calF$ contains no triangle is immediate: for every edge $e$,
    we include a variable $v_e$, and by interpreting blue as \texttt{true} and
    red as \texttt{false}, it is straightforward to construct a $2$-SAT instance
    $\phi_\calF$ such that $\phi_\calF$ is true if and only if $G$ admits an
    $\calF$-free colouring. Also, if $\calF$ contains all triangles, then on
    input $G$ we first verify whether $G$ is triangle-free, and we then reduce
    to $2$-SAT as in the previous case. For the remaining cases, let $\calF':=
    \calF\setminus\{\BT,\RT,\RBB,\RRB\}$. The case when the subset of triangles
    of $\calF$ is $\{\RT,\RRB\}$, we consider  the formula $\phi_{\calF'}$ and
    for each pair of different edges $e$ and $f$ is a common triangle, we add
    the clause $(v_e \lor v_f)$ to $\phi_{\calF'}$. In the case when the subset
    of triangles of $\calF$ is $\{\RRB,\RBB\}$, for each pair of edges $e$ and
    $f$ on a common triangle, we add the conjunct $(v_e\lor \lnot v_f)\land
    (\lnot v_e \lor v_f)$.
\end{proof}

\begin{lemma}\label{lem:linear-equations}
    Let $\calF$ be a set of $2$-edge-coloured graphs on three vertices. If
    $\RB\in \calF$ or $\RR,\BB\in\calF$, and the set of triangles in $\calF$ is
    $\{\RT,\RBB\}$ or $\{\BT,\RRB\}$, then the $\calF$-free colouring problem
    can be solved via linear equations of $\mathbb Z_2$.
\end{lemma}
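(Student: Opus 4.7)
The plan is to introduce a variable $x_e \in \mathbb{Z}_2$ for every edge $e$ of the input graph $G$, using the convention that $x_e = 1$ encodes ``$e$ is red'' and $x_e = 0$ encodes ``$e$ is blue'', and then to translate each pattern in $\calF$ into an affine equation over $\mathbb{Z}_2$. The resulting linear system can be solved via Gaussian elimination in polynomial time, and by construction its solutions are precisely the $\calF$-free $2$-edge-colourings of $G$.

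The key point is that the triangle subsets singled out in the hypothesis are exactly those that count red edges modulo $2$. Forbidding $\{\RT, \RBB\}$ excludes the triangles with an odd number of red edges, so for every triangle $e_1e_2e_3$ of $G$ we add the equation $x_{e_1} + x_{e_2} + x_{e_3} = 0$; dually, forbidding $\{\BT, \RRB\}$ excludes those with an even number of red edges, giving $x_{e_1} + x_{e_2} + x_{e_3} = 1$. The two admissible $P_3$-constraints are equally linear: having $\RB \in \calF$ forces both edges of every induced $P_3$ to take the same colour, which is $x_{e_1} + x_{e_2} = 0$, while having both $\RR$ and $\BB$ in $\calF$ forces them to take opposite colours, which is $x_{e_1} + x_{e_2} = 1$. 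The single-edge patterns are trivial: $\R \in \calF$ imposes $x_e = 0$ and $\B \in \calF$ imposes $x_e = 1$ for every edge $e$, while $\IS \in \calF$ is handled by first testing in polynomial time whether $G$ is $3K_1$-free and rejecting the instance otherwise.

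The main obstacle will be the ``mixed'' subcases in which only one of $\RR, \BB$ lies in $\calF$. Such a constraint read in isolation is ``not both red'' (or its dual), which is not affine over $\mathbb{Z}_2$. However, the hypothesis guarantees that either $\RB$ or $\{\RR, \BB\}$ is also present, so the two edges of every induced $P_3$ are already constrained to equal colours (case $\RB$) or to distinct colours (case $\{\RR,\BB\}$). Combining such an equality with a lone $\RR$ or $\BB$ pins both variables to a unique value; for instance, $\{\RB, \RR\} \subseteq \calF$ forces $x_{e_1} = x_{e_2} = 0$ on every induced $P_3$, which is again affine. My plan is to enumerate the finitely many configurations of $\calF$ allowed by the hypothesis and verify case-by-case that every constraint becomes affine over $\mathbb{Z}_2$, so that deciding the existence of an $\calF$-free colouring reduces to solving a linear system with $O(|E(G)|)$ unknowns and $O(|V(G)|^3)$ equations.
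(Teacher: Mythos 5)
Your reduction is essentially the paper's: the same variables $x_e$ with red $=1$, blue $=0$, the same parity equations for triangles and induced $P_3$'s, the same resolution of the ``mixed'' $P_3$ cases by combining $\RB$ (or $\{\RR,\BB\}$) with a lone monochromatic path to pin both variables, and the same pre-processing for $\IS$. (Your triangle parity $x_{e_1}+x_{e_2}+x_{e_3}=0$ for forbidden $\{\RT,\RBB\}$ is in fact the correct one under the stated convention.)

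There is one slip that, as literally written, makes the algorithm wrong: you claim that $\R\in\calF$ imposes $x_e=0$ (and $\B\in\calF$ imposes $x_e=1$) \emph{for every edge} $e$. These patterns are three-vertex patterns, namely a coloured edge together with an isolated vertex, so they only constrain edges $e$ that lie in an induced copy of $K_1+K_2$, i.e.\ edges having some vertex non-adjacent to both endpoints. Concretely, take $G=K_3$ and $\calF=\{\B,\RB,\RT,\RBB\}$, which satisfies the hypothesis of the lemma. Since $K_3$ contains no induced $K_1+K_2$ and no induced $P_3$, the all-blue colouring is $\calF$-free and $G$ is a yes-instance; but your system imposes $x_e=1$ on all three edges together with $x_{e_1}+x_{e_2}+x_{e_3}=0$, which is inconsistent, so your algorithm rejects. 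The fix is exactly what the paper does: add $x_e=1$ (resp.\ $x_e=0$) only for those edges $e$ contained in some induced $K_1+K_2$ of $G$. With that correction the case analysis you outline goes through and matches the paper's argument.
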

\begin{proof}
    We consider the case when $\RB ,\RT,\RBB\in \calF$, and as noted before,
    after some pre-processing, we can also assume that $\IS\not\in\calF$. Now
    notice that, by interpreting blue as $0$ and red as $1$, it is
    straightforward to observe that any $\calF$-free colouring of $G$ satisfies
    the system of linear equation (over $\mathbb Z_2$) obtained by adding a
    variable $x_e$ for every edge $e\in E(G)$; by adding for each pair of edges
    $e$ and $f$ inducing a $P_3$ the equation $x_e + v_f = 0$, and, finally, by
    adding for each triple of edges $e,f,g$ inducing a triangle the equation
    $x_e + x_f + x_g = 1$. Now notice that the remaining forbidden coloured
    graphs extend this system of linear equations so that the solutions to the
    extended system are in one-to-one correspondence with the $\calF$-free
    colourings of $G$: if $\B\in \calF$, add the equation $x_e = 1$ for every
    edge $e$ that belongs to some copy of $K_1 + K_2$ in $G$; if $\BB \in
    \calF$, then add the equation $x_e = 1$ for every edge that belongs to an
    induced path on $3$ vertices (recall that $\RB\in \calF$); symmetrically, if
    $\R\in \calF$ or $\RR\in \calF$, we consider the equations $x_e = 0$ for
    each edge $e$ that belongs to an induced $K_1 + K_2$ or an induced $P_3$,
    respectively. The remaining cases follow similarly.
\end{proof}

\begin{lemma}\label{lem:consistency}
    Let $\calF$ be a set of $2$-edge-coloured graphs on three vertices. If
    $\calF\subseteq \{\IS, \R, \B, \allowbreak \RR, \RB, \RT, \RRB\}$ or
    $\calF\subseteq \{\IS, \R, \B, \BB, \RB, \BT, \RBB\}$, then the $\calF$-free
    colouring problem can be solved in polynomial time.
\end{lemma}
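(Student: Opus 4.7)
The plan is to reduce the $\calF$-free colouring problem to \textsc{Horn-SAT}, which is solvable in linear time by unit propagation --- precisely the ``consistency'' procedure alluded to by the statement. By red-blue symmetry it suffices to handle $\calF\subseteq \{\IS, \R, \B, \RR, \RB, \RT, \RRB\}$. I begin with a trivial preprocessing step: if $\IS\in\calF$, check in polynomial time that $G$ is $3K_1$-free, and otherwise reject; then discard $\IS$ from $\calF$ and proceed with the reduction.

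Introduce a Boolean variable $x_e$ for every edge $e\in E(G)$, interpreting $x_e = 1$ as ``red'' and $x_e = 0$ as ``blue''. For each remaining pattern in $\calF$ and each induced copy of its underlying graph in $G$, add clauses forbidding the corresponding coloured configuration: for $\R$ (resp.\ $\B$), the unary clause $\lnot x_e$ (resp.\ $x_e$) for each edge $e$ lying in an induced $\overline{P_3}$; for $\RR$, the clause $\lnot x_e \vee \lnot x_f$ for every induced $P_3$ with edges $e, f$; for $\RB$, the pair $(\lnot x_e\vee x_f)\wedge (\lnot x_f\vee x_e)$ for each such $P_3$; for $\RT$, the clause $\lnot x_e\vee \lnot x_f\vee \lnot x_g$ for each triangle with edges $e,f,g$; and for $\RRB$, the three clauses stating that whenever two of $x_e, x_f, x_g$ equal $1$ so does the third. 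An assignment satisfies the resulting formula exactly when the corresponding $2$-edge-colouring of $G$ avoids every pattern in $\calF$.

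Every clause produced above has at most one positive literal, so the resulting CNF is Horn, and the whole formula has size polynomial in $|G|$. The only mildly non-obvious case is $\RRB$: its set of allowed assignments is $\{000, 100, 010, 001, 111\}$, which is immediately seen to be closed under componentwise AND and hence Horn-expressible (observe that the constraint for $\RB$, namely $\{00,11\}$, is likewise closed under AND under either convention). Invoking a standard Horn-SAT algorithm therefore yields the desired polynomial-time procedure. The companion case $\calF\subseteq\{\IS, \R, \B, \BB, \RB, \BT, \RBB\}$ is handled identically after interchanging the two colours, since then $\BB, \BT, \RBB$ play the exact roles of $\RR, \RT, \RRB$.

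The only point to verify beyond the construction itself is the finite check that each of the seven patterns in the admitted family gives rise to a relation closed under componentwise AND; this is the sole obstacle, and it is straightforward once one writes out the five- and seven-element relations associated with $\RRB$ and $\RT$.
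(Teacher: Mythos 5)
Your reduction is correct. With the convention red $=1$, each of $\R, \B, \RR, \RB, \RT, \RRB$ constrains the edge set of an induced $\overline{P_3}$, $P_3$ or $K_3$ by a relation closed under componentwise AND, the clauses you list are all Horn (at most one positive literal, including the three clauses for $\RRB$), the $\IS$-preprocessing is the right way to absorb the edgeless pattern, and swapping the roles of the colours handles the second family. The paper proves the lemma by a different route: after the same $\IS$-preprocessing it dispatches the case $\{\R,\B\}\subseteq\calF$ via \cref{lem:K2+K1}, observes that the problem is trivial whenever $\calF$ contains no monochromatic pattern of one of the two colours, and in the single remaining case (up to colour symmetry, $\R\in\calF$ and $\calF\subseteq\{\R,\BB,\RB,\BT,\RBB\}$) exhibits one canonical colouring --- edges lying in an induced $K_1+K_2$ blue, all other edges red --- and argues that $G$ is a yes-instance if and only if this particular colouring is $\calF$-free. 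The two arguments are two faces of the same phenomenon: the paper's canonical colouring is exactly the minimal model of your Horn formula under the appropriate polarity, and the paper itself notes in \cref{sec:uniform} that this lemma amounts to consistency checking for a boolean CSP, which is corroborated by the closure-under-maximum entries in \cref{tab:preserve-P3,tab:preserve-K3}. Your version buys uniformity --- no case analysis, with correctness delegated to the standard Horn-SAT algorithm --- while the paper's version buys an explicit combinatorial certificate, namely a single colouring to test, at the cost of splitting off the $\{\R,\B\}\subseteq\calF$ and trivial subcases by hand.
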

\begin{proof}
    As previously noted, we can assume that $\IS\not\in \calF$, and by
    Lemma~\ref{lem:K2+K1} we restrict to the cases when $\R\not\in\calF$ or
    $\B\not\in \calF$. Also, if $\calF$ contains no red monochromatic or no blue
    monochromatic graph, the $\calF$-free colouring problem is trivial. Notice
    that up to colour symmetry, the remaining sets satisfy $\R\in \calF$ and
    $\calF\subseteq\{\R,\BB, \RB, \BT, \RBB\}$. In any such case, it is
    straightforward to observe that $G$ admits an $\calF$-free colouring if and
    only if the following colouring of $G$ is $\calF$-free: colour every edge of
    $G$ that belongs to an induced copy of $K_1 +K_2$ blue, and colour the
    remaining edges red.
\end{proof}

\begin{lemma}\label{lem:triangles}
    If $\calF = \{\BT,\RT,\RRB\}$, or $\calF = \{\BT,\RT,\RBB\}$, then the
    $\calF$-free colouring problem is $\NP$-complete. 
\end{lemma}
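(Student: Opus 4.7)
The $\calF$-free colouring problem is clearly in NP, and by the red-blue symmetry of the forbidden sets the two cases are equivalent, so I focus on $\calF = \{\BT, \RT, \RBB\}$. Here an $\calF$-free $2$-edge-colouring is precisely one in which every triangle contains exactly two red edges and exactly one blue edge. I plan to prove NP-hardness by a reduction from Positive 1-in-3-SAT, which is a well-known NP-complete problem.

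The key building block is the following rigidity of $K_4$ under the constraint ``exactly one blue edge per triangle''. A short double-counting argument (each of the four triangles contributes one blue incidence, each edge lies in two triangles) shows that the blue edges must form a perfect matching of $K_4$, and it is easy to see that each of the three perfect matchings of $K_4$ gives rise to a valid $\calF$-free colouring. In particular, any two opposite (non-adjacent) edges of $K_4$ must always receive the same colour, so $K_4$ acts as an ``equality gadget'' between two prescribed edges.

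Given an instance $\phi$ of Positive 1-in-3-SAT with variables $x_1, \dots, x_n$ and clauses $C_1, \dots, C_m$, I build $G_\phi$ as follows. For each variable $x_i$, introduce a \emph{variable edge} $e_i = u_i v_i$ on two fresh vertices. For each clause $C_\ell = (x_i, x_j, x_k)$, introduce three fresh vertices $P_\ell, Q_\ell, R_\ell$ forming a \emph{clause triangle}. For each occurrence of a variable $x_i$ in $C_\ell$, placed on one fixed side of the clause triangle (say $P_\ell Q_\ell$), attach the $K_4$ on $\{u_i, v_i, P_\ell, Q_\ell\}$ by adding the four edges $u_iP_\ell, u_iQ_\ell, v_iP_\ell, v_iQ_\ell$; then $e_i$ and $P_\ell Q_\ell$ are opposite edges of that $K_4$. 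The equality gadget forces $P_\ell Q_\ell$ to receive the colour of $e_i$, and the constraint ``exactly one blue edge'' on the clause triangle then says exactly one of $x_i, x_j, x_k$ is coloured blue, which is precisely the 1-in-3 condition. Interpreting blue as \emph{true} gives the desired bijection between satisfying assignments of $\phi$ and $\calF$-free colourings of $G_\phi$.

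The main technical obstacle is verifying that $G_\phi$ contains no triangles beyond the clause triangles and the four triangles inside each $K_4$ gadget; any additional triangle would impose an unintended constraint. This is handled by a direct enumeration of common neighbourhoods: since the $P_\ell, Q_\ell, R_\ell$ are fresh per clause and the $u_i, v_i$ are fresh per variable, the only vertices shared across gadgets are the $u_i, v_i$'s across the different $K_4$'s attached at $e_i$, and one checks case by case that no new triangle arises. The construction is polynomial in $|\phi|$, and the two directions of the reduction then follow routinely from the rigidity of $K_4$ and the 1-in-3 interpretation of the clause triangles.
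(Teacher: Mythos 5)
Your overall strategy is the same as the paper's: reduce from positive 1-in-3-SAT, represent each clause by a triangle whose sides must contain exactly one blue edge, and exploit the rigidity of $K_4$ (blue edges form a perfect matching, hence opposite edges are forced to the same colour) to copy colours between edges. The paper's ``atom'' is just two of your $K_4$-equality-gadgets chained through an intermediate edge, so up to that packaging the constructions coincide, and your analysis of the $K_4$ gadget itself is correct.

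There is, however, a concrete gap: your construction fails when a variable occurs twice in the same clause, a case the paper handles separately with its ``block'' gadget. Suppose $C_\ell=(x_i\lor x_i\lor x_j)$, so that $x_i$ is attached to both $P_\ell Q_\ell$ and $Q_\ell R_\ell$. You then add the $K_4$'s on $\{u_i,v_i,P_\ell,Q_\ell\}$ and on $\{u_i,v_i,Q_\ell,R_\ell\}$, which makes $u_i$ and $v_i$ adjacent to all of $P_\ell,Q_\ell,R_\ell$; since $P_\ell R_\ell$ is already an edge of the clause triangle, the five vertices $u_i,v_i,P_\ell,Q_\ell,R_\ell$ induce a $K_5$. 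A $K_5$ admits no $\{\BT,\RT,\RBB\}$-free colouring (its only colouring avoiding monochromatic triangles is the one where each colour class is a $C_5$, which contains $\RBB$), so your graph is a no-instance even though $(x_i\lor x_i\lor x_j)$ is 1-in-3 satisfiable by $x_i=\mathtt{false}$, $x_j=\mathtt{true}$. This directly contradicts your claim that ``one checks case by case that no new triangle arises'': the check fails precisely here. The gap is easily repaired --- either restrict to 1-in-3-SAT instances in which every clause has three distinct variables (still NP-complete), or interpose an extra edge between the variable edge and the clause edge, i.e.\ chain two $K_4$'s as in the paper's atom/block gadgets --- but as written the reduction is not correct on all inputs.
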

\begin{proof}
    Both cases are symmetric, we consider the case $\calF = \{\BT,\RT,\RBB\}$,
    and reduce from positive 1-in-3 SAT to the $\calF$-free colouring problem.
    Furthermore, we represent the input of 1-in-3 SAT as a boolean formula in
    CNF with clauses of size 3. 

    For a given formula $\Phi$ which is an instance of the aforementioned
    SAT-problem, we shall construct a graph $G$ such that $\Phi$ is satisfiable
    (in the sense of this particular SAT-problem) if and only if $G$ is admits a
    $\{ \BT, \RT, \RBB \}$-free 2-edge-colouring.

    For each clause, we create a new triangle in $G$, vertex-disjoint with all
    other vertices of $G$. Edges of this triangle will correspond to variables.
    We now need to introduce a copying gadget to ensure that all edges
    corresponding to a given variable will have the same colour. We distinguish
    two types of copying gadgets: an \emph{atom}, which is a copying gadget
    between two different clauses, and a \emph{block}, which is a copying gadget
    within the same clause (resolving, e.g., cases like $x \lor x \lor y$). Note
    that a block consists of two atoms sharing an edge; see
    \Cref{fig:copying_gadgets}.

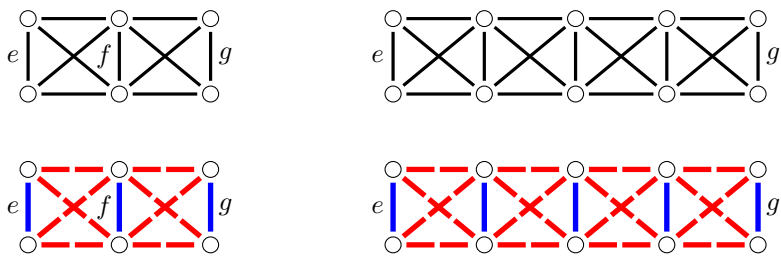
\begin{figure}[htb!]
\centering
\begin{tikzpicture}
  \begin{scope}
  \node at (-1.4,0.5){$e$};
  \node at (-0.2,0.5){$f$};
   \node at (1.4,0.5){$g$};
   \node [vertex] (1) at (-1.2,0){};
    \node [vertex] (2) at (-1.2,1){};
    \node [vertex] (3) at (0,0){};
    \node [vertex] (4) at (0,1) {};
    \node [vertex] (5) at (1.2,0) {};
    \node [vertex] (6) at (1.2,1) {};
    
    \foreach \i/\j in {1/2, 1/3, 1/4, 2/3, 2/4, 3/4, 3/5, 3/6, 4/5, 4/6, 5/6}
    \draw [edge] (\i) to (\j);
  \end{scope}
  
  \begin{scope}[xshift=6cm]
    \node at (-2.6,0.5){$e$};
    \node at (2.6,0.5){$g$};
    \node [vertex] (a) at (-2.4,0) {};
    \node [vertex] (b) at (-2.4,1) {};
    \node [vertex] (1) at (-1.2,0){};
    \node [vertex] (2) at (-1.2,1){};
    \node [vertex] (3) at (0,0){};
    \node [vertex] (4) at (0,1) {};
    \node [vertex] (5) at (1.2,0) {};
    \node [vertex] (6) at (1.2,1) {};
    \node [vertex] (c) at (2.4,0) {};
    \node [vertex] (d) at (2.4,1) {};
    
    \foreach \i/\j in {a/b, a/1, a/2, b/1, b/2, 1/2, 1/3, 1/4, 2/3, 2/4, 3/4,
      3/5, 3/6, 4/5, 4/6, 5/6, c/d, c/5,c/6,d/5, d/6}
      \draw [edge] (\i) to (\j);
  \end{scope}
  

  \begin{scope}[yshift = -2cm]
    \node at (-1.4,0.5){$e$};
    \node at (-0.2,0.5){$f$};
    \node at (1.4,0.5){$g$};
    \node [vertex] (1) at (-1.2,0){};
    \node [vertex] (2) at (-1.2,1){};
    \node [vertex] (3) at (0,0){};
    \node [vertex] (4) at (0,1) {};
    \node [vertex] (5) at (1.2,0) {};
    \node [vertex] (6) at (1.2,1) {};
    
    \foreach \i/\j in {1/3, 1/4, 2/3, 2/4, 3/5, 3/6, 4/5, 4/6}
      \draw [redE] (\i) to (\j);
    
    \foreach \i/\j in {1/2, 3/4, 5/6}
      \draw [blueE] (\i) to (\j);
  \end{scope}
  
  \begin{scope}[xshift=6cm, yshift = -2cm]
    \node at (-2.6,0.5){$e$};
    \node at (2.6,0.5){$g$};
    \node [vertex] (a) at (-2.4,0) {};
    \node [vertex] (b) at (-2.4,1) {};
    \node [vertex] (1) at (-1.2,0){};
    \node [vertex] (2) at (-1.2,1){};
    \node [vertex] (3) at (0,0){};
    \node [vertex] (4) at (0,1) {};
    \node [vertex] (5) at (1.2,0) {};
    \node [vertex] (6) at (1.2,1) {};
    \node [vertex] (c) at (2.4,0) {};
    \node [vertex] (d) at (2.4,1) {};
    
    \foreach \i/\j in {a/1, a/2, b/1, b/2, 1/3, 1/4, 2/3, 2/4, 3/5, 3/6, 4/5,
                       4/6, c/5,c/6,d/5, d/6}
      \draw [redE] (\i) to (\j);
    
    \foreach \i/\j in {a/b, 1/2, 3/4,  5/6, c/d}
      \draw [blueE] (\i) to (\j);
  \end{scope}
  
\end{tikzpicture}

\caption{An atom on the top left and a block on the top  right. On the bottom
left (resp.\ right), the unique $\{ \BT, \RT, \RBB \}$-free  $2$-edge-colouring
of the atom (resp.\ block) where $e$ is coloured blue}
\label{fig:copying_gadgets}
\end{figure}

First, we describe possible 2-edge-colourings for atoms. An atom is the graph
resulting from two $K_4$ by identifying one edge, say $f$ (see
\Cref{fig:copying_gadgets} for edge labels). If the edge $e$ is blue, then all
triangles containing $e$ must have all other edges red. Thus $f$ must be blue,
otherwise we obtain a red triangle. By a similar argument we can conclude that
$g$ must be also blue. On the other hand, if $e$ is red, then $f$ and also $g$
must be red, again by similar reasoning. Since a block consists of two atoms
sharing an edge, we can conclude that edge $e$ in every block has the same
colour as $f$ in every $\{ \BT, \RT, \RBB \}$-free 2-edge-colouring.

For each pair of occurrences of a given variable in different clauses, we create
an atom so that edge $e$ is identified with one occurrence and edge $g$ with the
other one. The properties of the atom ensure that the colour of the edges and
thus their truth values are the same. If a variable occurs twice in one clause,
we create a block so that that edge $e$ is identified with one occurrence and
edge $g$ with the other one. Observe that we cannot use atoms in this case,
since we would create multi-edges. 

Notice that we do not introduce any new triangles (besides, of course, those
introduced in each gadget) by putting all the copying gadgets together. Since
the distance between any vertex incident with $e$ and any vertex incident with
$f$ is at least two and there are no edges between two different blocks or
atoms, we cannot create any new triangle in the construction. Thus each triangle
in $G$ corresponds to exactly one clause in $\Phi$.

Finally, on the one hand, let $G$ be the graph we have just constructed for the
formula $\Phi$. If there is a $\{ \BT, \RT, \RBB \}$-free 2-edge-colouring for
$G$, then each triangle has exactly one blue edge, meaning that exactly one
literal is true. Furthermore, all occurrences of the same variable are valued
consistently because of the properties of atoms and blocks. Therefore, we are
able to extract the corresponding satisfying valuation of variables of $\Phi$.
On the other hand, suppose that $\Phi$ is satisfiable and take a satisfying
assignment $v_{\Phi}$. If $v_{\Phi}(x)=1$ then colour all edges corresponding to
occurrences of $x$ blue, and colour them red otherwise. Since in each clause
there is exactly one variable set to true and all occurrences of a variable are
coloured by the same colour, it is always possible to complete the colouring of
$G$ to a $\{ \BT, \RT, \RBB \}$-free colouring of atoms and blocks. This
completes the reduction.
\end{proof}

\section{A uniform reduction to boolean CSPs}
\label{sec:uniform}

The reader familiar with constraint satisfaction problems (CSPs) may have
realized that the reductions in Lemmas~\ref{lem:2-SAT},
and~\ref{lem:linear-equations}, are reductions to boolean CSPs, and the
algorithm from Lemma~\ref{lem:consistency} is a simple consistency checking
which also solves certain boolean CSPs. In this section, we
present a uniform reduction from the $\calF$-free colouring problem to boolean
CSPs (adapted from~\cite{bodirskyArXiv} to our context).

A \textit{relational signature} $\tau$ is a set of relation symbols $R, S,\dots$
each equipped with a positive integer called its \textit{arity}. A
\textit{$\tau$-structure} $\mathbb A$ consists of a vertex set $V(\mathbb A)$,
and for each $R \in \tau$ of arity $r$, of an $r$-ary relation $R(\mathbb
A)\subseteq V(\mathbb A)^r$ called the \textit{interpretation} of $R$ in
$\mathbb A$. If $\mathbb A$ has a two-element vertex set, we call it a
\textit{boolean structure}. Given a pair of $\tau$-structures $\mathbb A,
\mathbb B$ a \textit{homomorphism} $f\colon \mathbb A\to \mathbb B$ is a
function $f\colon V(\mathbb A)\to V(\mathbb B)$ such that  $(a_1,\dots, a_r) \in
R(\mathbb A)$ implies that $(f(a_1),\dots, f(a_r)) \in R(\mathbb B)$ for each $R
\in \tau$ of arity $r$. In this case we write $\mathbb A\to \mathbb B$, and we
denote by $\CSP(\mathbb B)$ the class of finite structures $\mathbb A$ such that
$\mathbb A\to \mathbb B$. The \textit{constraint satisfaction problem} (CSP)
with template $\mathbb B$ consists of deciding whether $\mathbb A\in
\CSP(\mathbb B)$ on input structure $\mathbb A$. Note that in this setting, a
graph $G$ can be regarded as an $\{E\}$-structure where $E$ is a relation symbol
of arity $2$, and the edge set of $G$ corresponds to the interpretation of $E$
in $G$. So, given a graph $H$, the constraint satisfaction problem with template
$H$ is essentially the $H$-colouring problem --- formally, the input space of
$\CSP(H)$ consists of all finite digraphs. In particular, $\CSP(K_k)$ is the
problem of deciding if the underlying graph of an input digraph $D$ is
$k$-colourable. 

Consider a $2$-edge-coloured graph $H$ with vertex set $\{1,\dots, n\}$ and
non-empty edge set $E(G)$ with lexicographical ordering $e_1 < \dots < e_m$,
i.e., $e_i < e_j$ for $e_i = ab$ and $e_j = cd$ if $\min\{a,b\} < \min\{c,d\}$
or if $\min\{a,b\} = \min\{c,d\}$ and $\max\{a,b\} < \max\{c,d\}$. Clearly, we
can code the $2$-edge-colouring of $H$ with a tuple $c_H\in\{r,b\}^m$ where
$(c_H)_i= r$ if the edge $e_i\in E(G)$ is coloured red, and $(c_H)_i = b$ if the
edge $e_i$ is coloured blue. Using this simple representation of
$2$-edge-colourings of labelled graphs, we will reduce the $\mathcal F$-free
$2$-edge-colouring problem to a CSP of a boolean structure as follows.

For a finite set $\calF$ of $2$-edge-coloured graphs, let $\ell(\calF)$ be the
set of underlying labelled graphs of $\calF$. Equivalently, $\ell(\calF)$
consists of  graphs with vertex set $\{1,\dots, |V(F)|\}$ isomorphic to the
underlying graph of some $F\in \calF$. The signature of the boolean structure
$\mathbb B_\calF$ consists of a relation symbol $R_L$ for each $L\in
\ell(\calF)$, and the arity of $R_L$ is $|E(L)|$  if $L$ has at least one edge,
and if $L$ is an edgeless graph, then the arity of $R_L$ is $1$; equivalently,
the arity of $R_L$ is $\max\{1,|E(L)|\}$. The boolean structure $\mathbb
B_\calF$ has vertex set $\{r,b\}$. For each $L\in \ell(\calF)$, the
interpretation of $R_L$ in $\mathbb B_\calF$ consists of all tuples $c_{L'}$
where $L'$ is an $\mathcal F$-free $2$-edge-colouring of $L$. Notice that for
each edgeless $L$, the interpretation of $R_L$ in $\mathbb B_\calF$ is empty.

For instance, if $\mathcal F$ consists of both monochromatic paths on three
vertices, then the signature of $\mathbb B_\mathcal F$ consists of three binary
relational symbols $R_1,R_2,R_3$ one for each labeled path on three vertices.
Suppose that $R_1$ corresponds to the labeled path with edge set $\{12,23\}$, so
the interpretation of $R_1(\mathbb B_\mathcal F)$ consists of the tuples $(r,b)$
and $(b,r)$.  Note that in this case, the interpretation of the three binary
relations are the same but in general this might not be the case.

\begin{lemma}\label{lem:boolean-CSP}
  For any finite set of $2$-edge-coloured graphs $\calF$, there is a
  polynomial-time reduction from the $\calF$-free $2$-edge-colouring problem to
  $\CSP(\mathbb B_\calF)$.
\end{lemma}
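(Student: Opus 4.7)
The plan is to construct, in polynomial time, an instance $\mathbb A_G$ of $\CSP(\mathbb B_\calF)$ such that $\mathbb A_G \to \mathbb B_\calF$ if and only if $G$ admits an $\calF$-free $2$-edge-colouring. The central idea is to let $V(\mathbb A_G) = E(G)$; then any map $f \colon V(\mathbb A_G) \to V(\mathbb B_\calF) = \{r,b\}$ is literally a $2$-edge-colouring of $G$, and the question becomes whether the constraints of $\mathbb A_G$ can be arranged to enforce exactly the condition of being $\calF$-free.

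For each $L \in \ell(\calF)$ with at least one edge, with vertex set $\{1, \ldots, |V(L)|\}$ and edges $e_1 < \cdots < e_m$ listed in lex order, I would populate $R_L(\mathbb A_G)$ as follows. For every injection $\phi \colon \{1, \ldots, |V(L)|\} \to V(G)$ for which $G[\phi(1), \ldots, \phi(|V(L)|)]$, equipped with the ordering induced by $\phi$, coincides with $L$ as a labelled graph, I would add the tuple $(\phi(e_1), \ldots, \phi(e_m))$ to $R_L(\mathbb A_G)$, where $\phi$ acts on edges by $\phi(\{a,b\}) = \{\phi(a), \phi(b)\}$. Edgeless symbols require a separate treatment: since $R_L(\mathbb B_\calF) = \emptyset$ whenever $L$ is edgeless, such $L$ must be handled by a preprocessing step, directly rejecting the input if $G$ contains an independent set of size $|V(L)|$ (which, since $|V(L)|$ is constant, takes polynomial time).

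Correctness follows by unfolding the definitions. A map $f \colon E(G) \to \{r,b\}$ is a homomorphism $\mathbb A_G \to \mathbb B_\calF$ if and only if, for every $L \in \ell(\calF)$ and every tuple $(\phi(e_1), \ldots, \phi(e_m)) \in R_L(\mathbb A_G)$, the tuple $(f(\phi(e_1)), \ldots, f(\phi(e_m)))$ lies in $R_L(\mathbb B_\calF)$, i.e., it encodes an $\calF$-free colouring of $L$. Since $R_L(\mathbb A_G)$ enumerates precisely the induced copies of $L$ in $G$ (over all labellings), this amounts to requiring that no induced subgraph of $G$, coloured by $f$, matches any pattern in $\calF$; equivalently, $f$ is an $\calF$-free colouring of $G$.

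For the complexity bound, note that $\calF$ is fixed, so both $|\ell(\calF)|$ and $k := \max_{L \in \ell(\calF)} |V(L)|$ are constants. Enumerating the $O(|V(G)|^k)$ ordered tuples of vertices of $G$ and checking each one against each labelled pattern $L$ can be performed in polynomial time, and so $\mathbb A_G$ is built in polynomial time. The main bookkeeping subtlety is to apply the lex-ordering convention used to encode $c_{L'}$ in the definition of $\mathbb B_\calF$ consistently when encoding tuples in $R_L(\mathbb A_G)$; once this convention is aligned, the homomorphism condition enforces precisely the desired pattern-avoidance on each induced copy.
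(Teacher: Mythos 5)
Your construction is essentially identical to the paper's: one variable per edge of $G$, each relation $R_L$ populated with the lex-ordered edge tuples of the induced copies of $L$ in $G$, edgeless patterns handled by a polynomial-time independent-set check (the paper outputs a fixed unsatisfiable single-vertex instance rather than ``rejecting'', but that is cosmetic), and the same unfolding-of-definitions correctness argument. The only superficial difference is that you range over all injections onto a fixed labelled copy of $L$, whereas the paper ranges over increasing injections and lets $\ell(\calF)$ carry all labellings; the resulting sets of constraints coincide.
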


\begin{proof}
On a given input graph $G$, we construct a structure $\mathbb B_G$ such that
$\mathbb B_G\in \CSP(\mathbb B_\calF)$ if and only if $G$ admits an $\calF$-free
$2$-edge-colouring. First, if $\calF$ contains an empty graph $L$ on $k$
vertices, verify whether $G$ contains an independent set on at least $k$
vertices. If yes, construct $\mathbb B_G$ consisting of a single vertex $v$ and
$R_L(\mathbb B_G) = \{v\}$. Otherwise,  we assume that $G$ is $L$-free for every
edgeless graph $L\in \calF$. Consider an enumeration $v_1,\dots, v_n$ of $V(G)$.
For each edge $e \in E(G)$, there is a vertex $x_e \in V(\mathbb B_G)$. For
every $R_H$ with $H\in \ell(\calF)$ on vertex set $\{1,\dots, k\}$ and at least
one edge, we construct the relation $R_H(\mathbb B_G)$ as follows. For every $k$
vertices $v_{i_1},\dots, v_{i_k}$ of $G$ with $i_1 < \dots < i_k$, let $e_1 <
\dots < e_m$ be the lexicographical ordering of the edges of $G[\{ v_1, \dots,
v_k \}]$. If the mapping $v_i \mapsto i$ is an isomorphism from $G[\{v_1,\dots,
v_k\}]$ to $H$, then we add the tuple $(x_{e_1},\dots, x_{e_m})$ to the
interpretation of $R_H$ in $\mathbb B_G$. It is straightforward to observe that
$\mathbb B_G \in \CSP(\mathbb B_\calF)$ if and only if $G$ admits an
$\calF$-free $2$-edge-colouring. 
\end{proof}

Besides providing a more systematic and unified way to obtain polynomial-time
algorithms for number of classes we encountered in this section, this lemma also
allows us to exhibit limitations of such immediate reductions: using Schaefer's
classification of the complexity of boolean CSPs, we can determine in which
cases the ``natural'' reduction from the $\calF$-free to a boolean CSP, is just
a reduction to an $\NP$-complete problem. In Appendix~\ref{ap:complex} we
present this classification for forbidden sets of $2$-edge-coloured graphs.

In particular, when  $\calF = \{\BT,\RT,\RRB\}$, then the boolean CSP
$\CSP(\mathbb B_\calF)$ encodes positive 1-IN-3 SAT, and in
Lemma~\ref{lem:triangles} we show that positive 1-IN-3 reduces back to the
$\calF$-free colouring problem. It makes sense to ask for which sets $\calF$ it
is the case that the $\CSP(\mathbb B_\calF)$ and the $\calF$-free colouring
problem are polynomial-time equivalent. Further notice that whenever this is the
case, the $\calF$-free colouring problem is in P or NP-complete (and it cannot
be an NP-intermediate problem).

\begin{corollary}\label{cor:triangles}
    For every set of $2$-edge-coloured triangles, the $\calF$-free
    $2$-edge-colouring problem and $\CSP(\mathbb B_\calF)$ are polynomial-time
    equivalent.
\end{corollary}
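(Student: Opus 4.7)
The plan is to combine \cref{lem:boolean-CSP}, which already provides one direction of the equivalence, with a short case analysis of the $16$ subsets of $\{\RT, \BT, \RRB, \RBB\}$. Since $\calF$ consists only of triangles, the signature of $\mathbb{B}_\calF$ contains a single ternary relation $R$ whose interpretation codes the $\calF$-free colourings of the labelled triangle; \cref{lem:boolean-CSP} already reduces the $\calF$-free colouring problem to $\CSP(\mathbb B_\calF)$, so the entire work is the reverse direction, which I will handle by matching complexity classes case by case.

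First I would check, by direct inspection, that in every subset other than $\{\RT,\BT\}$, $\{\RT,\BT,\RRB\}$, and $\{\RT,\BT,\RBB\}$, at least one of the constant tuples $(r,r,r)$ or $(b,b,b)$ belongs to $R(\mathbb{B}_\calF)$; consequently the constant assignment satisfies every instance and $\CSP(\mathbb{B}_\calF)$ is in P. By \cref{lem:boolean-CSP} the $\calF$-free colouring problem is then also in P, and any two polynomial-time solvable decision problems (with both yes- and no-instances) are trivially polynomial-time equivalent via the reduction that decides the source instance and outputs a fixed target instance of the corresponding answer.

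For the three exceptional sets, the relations $R(\mathbb{B}_\calF)$ encode exactly \textsc{NAE-}$3$-\textsc{SAT}, (general) $1$-\textsc{in}-$3$-\textsc{SAT}, and positive $1$-\textsc{in}-$3$-\textsc{SAT} respectively, all of which are NP-complete boolean CSPs. The matching NP-hardness of the $\calF$-free colouring problem is already available in the paper: $\{\RT,\BT\}$ is covered by the classical theorem of Burr, Erd\H{o}s and Lov\'asz~\cite{burrMRT1990}, while $\{\RT,\BT,\RRB\}$ and $\{\RT,\BT,\RBB\}$ are handled by \cref{lem:triangles}. Combined with the easy direction from \cref{lem:boolean-CSP}, this yields polynomial-time equivalence in the NP-complete regime as well. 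The only genuine obstacle is a clean identification of the three NP-complete boolean CSPs with the three sets of forbidden triangles for which hardness proofs already exist, which is immediate from the explicit listing of the allowed triples in $R(\mathbb B_\calF)$.
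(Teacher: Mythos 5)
Your proposal follows essentially the same route as the paper's proof: \cref{lem:boolean-CSP} gives one direction unconditionally, the tractable sets are dispatched because both problems lie in P, and the three remaining sets $\{\RT,\BT\}$, $\{\RT,\BT,\RRB\}$, $\{\RT,\BT,\RBB\}$ are handled by pairing NP-completeness of the boolean CSP with the existing hardness results (\cite{burrMRT1990} and \cref{lem:triangles}). One step of your case analysis is, however, false as stated: for $\calF=\{\RT,\BT,\RRB,\RBB\}$ the relation $R(\mathbb B_\calF)$ is \emph{empty}, so neither $(r,r,r)$ nor $(b,b,b)$ belongs to it and the constant assignment does not satisfy an instance that uses $R$. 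The conclusion survives --- such an instance is a yes-instance iff its copy of $R$ is empty, and the colouring problem is just triangle-freeness testing, so both problems are still in P --- but the justification needs this extra sentence; the paper absorbs this case (together with all the other easy ones) into the notion of a trivial set via \cref{rmk:trivial} and \cref{lem:constant}. A cosmetic point: both $\{\RT,\BT,\RRB\}$ and $\{\RT,\BT,\RBB\}$ yield the exactly-one-of-three relation, merely with the roles of $r$ and $b$ exchanged, so both are instances of positive 1-in-3-SAT; the distinction you draw between ``general'' and ``positive'' 1-in-3-SAT plays no role, since neither CSP involves negated variables, and in any case both are NP-complete, which is all the argument needs.
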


\begin{proof}
The $\calF$-free $2$-edge-colouring problem reduces in polynomial time to
$\CSP(\mathbb B_\calF)$ (\Cref{lem:boolean-CSP}). As mentioned before, if
$\calF$ is a trivial set, then there is nothing left to prove, and it follows
from the NP-hardness of the $\{\RT, \BT\}$-free colouring problem that the claim
is true for $\calF = \{\RT, \BT\}$. Finally, the cases when $\calF = \{\RT, \BT,
\RRB\}$ and $\calF = \{\RT, \BT, \RBB\}$ follow via Lemma~\ref{lem:triangles}.
\end{proof}

In a similar context Bodirsky and Guzm\'an-Pro~\cite{bodirskyArXiv} showed that
for every finite set of tournaments $\mathcal T$, there is a boolean structure
$\mathbb B_\mathcal T$ such that the $\mathcal T$-free orientation problem and
$\CSP(\mathbb B_\mathcal T)$ are polynomial-time equivalent. In particular, this
implies a P vs. NP-complete dichotomy for the $\mathcal T$-free orientation
problem. As previously mentioned, the construction of $\mathbb B_\calF$ for
forbidden $2$-edge-coloured graphs is analogous to the construction of $\mathbb
B_\mathcal T$ for forbidden tournaments $\mathcal T$. Thus, it makes sense to
ask the following question. 

\begin{question}\label{qst:complete-obstructions}
    Is it true that for every finite set $\calF$ of $2$-edge-coloured complete
    graphs, the $\calF$-free $2$-edge-colouring problem and $\CSP(\mathbb
    B_\calF)$ are polynomial-time equivalent?
\end{question}

In particular, a positive answer to this question implies that $\calF$-free
colouring problems have a P versus NP-complete dichotomy whenever $\calF$ is a
finite set of $2$-edge-coloured complete graphs. Actually, such problems can be
expressed in a logic called GMSNP for which such a P versus NP-complete
dichotomy is conjectured (we shall not go into the detailed definitions here, we
refer the reader to \cite{barsukovLNCS23,bienvenuTODS14}). However,
when we consider arbitrary finite set $\calF$ of (not necessarily complete)
$2$-edge-coloured graphs, we land in an extension of GMSNP that allows the use
of inequality if its formulas, and it is known that this logic captures
NP-intermediate problems (unless P = NP)~\cite{barsukovLNCS23,federSIAM28}. This
suggest that there might be no P versus NP-complete dichotomy for $\calF$-free
colouring problems.

\begin{question}\label{qst:NP-intermediate}
    Is there a finite set $\calF$ of $2$-edge-coloured graphs such that the
    $\calF$-free colouring problem is $\NP$-intermediate (assuming $\PO\neq
    \NP$)?
\end{question}


\section{Larger patterns and more colours}
\label{sec:general}

As noted above, forbidden edge-coloured-pattern problems have been already
studied for general relational structures, and in the context of CSPs and
computational complexity~\cite{madelaineLMCS5}. By staying in the realm of
structural graph theory and specifically by considering undirected loopless
graphs, there are two interesting natural generalizations to the work presented
in the previous section. Either one can consider forbidden $2$-edge-coloured
graphs on more vertices, or forbid $k$-edge-coloured graphs with $k \ge 3$. In
this section, we make some initial observations regarding forbidden sets with
$2$-edge-coloured graphs on more than three vertices, and forbidden sets with
$k$-edge-coloured graphs. In particular, we will notice how some of our previous
results naturally generalize to this broader context.

Given a positive integer $k$, we denote by $\overline{B}_k$ the set of
$2$-edge-coloured graphs on $k$ vertices without any blue edge. For instance,
$\overline{B}_1 = \{K_1\}$, $\overline{B}_2$ contains $2K_1$ and ${}^r K_2$, and
$\overline{B}_3 = \{\IS, \R, \RR, \RT\}$. Similar to \cref{pro:co-bip}, the
following example shows that co-$k$-partite graphs admit a natural expression by
means of $2$-edge-coloured graphs.

\begin{proposition}
\label{prop:co-k-partite}
  For a graph $G$ and a positive integer $k$, the following statements are
  equivalent:
  \begin{enumerate}
    \item $G$ is a co-$k$-partite graph.
    \item $G$ admits a ($\{\BB, \RBB\}\cup \overline{B}_{k+1}$)-free
      $2$-edge-colouring.
    \item The edge set of $G$ admits a partition $(B,R)$ such that $B$ induces a
      spanning cluster of $G$, and any subgraph of $G$ induced by $k+1$ vertices
      contains an edge of $B$.
  \end{enumerate}
\end{proposition}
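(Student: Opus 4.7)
The plan is to prove a cyclic chain of implications $(1) \Rightarrow (2) \Rightarrow (3) \Rightarrow (1)$; no step is technically deep and the heart of the argument is unpacking what the three forbidden configurations say about the blue edge set.

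For $(1) \Rightarrow (2)$, given a partition $V(G) = V_1 \sqcup \cdots \sqcup V_k$ where each $V_i$ induces a clique, colour every edge inside some $V_i$ blue and every edge between two different parts red. If $uv$ and $vw$ are blue, then $u, v, w$ all lie in the same $V_i$, so $uw$ is a blue edge of $G$; this rules out both $\BB$ (which would need $uw \notin E(G)$) and $\RBB$ (which would need $uw$ red). Finally, any $k+1$ vertices contain two in the same $V_i$ by pigeonhole, and the edge joining them is blue; hence no $(k+1)$-vertex induced subgraph avoids blue, and no pattern in $\overline{B}_{k+1}$ is induced.

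For $(2) \Rightarrow (3)$, let $(B, R)$ be such a $2$-edge-colouring. If $uv$ and $vw$ are in $B$, then $uw$ must lie in $E(G)$ (else $u,v,w$ induce $\BB$), and once $uw$ is an edge it must be blue (else $u,v,w$ induce $\RBB$). Iterating, every connected component of $(V(G), B)$ is a clique with all inner edges blue, so $(V(G), B)$ is a spanning cluster of $G$. The condition that every $(k+1)$-vertex induced subgraph contains a blue edge is precisely $\overline{B}_{k+1}$-freeness, since the elements of $\overline{B}_{k+1}$ are exactly the $2$-edge-coloured graphs on $k+1$ vertices with no blue edge.

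For $(3) \Rightarrow (1)$, let $V_1, \dots, V_m$ be the connected components of the spanning cluster $(V(G), B)$; each $V_i$ is a clique in $(V(G), B)$ and hence in $G$. If $m \ge k+1$, pick one vertex from each of $k+1$ distinct components; no two of them are joined by a blue edge, contradicting the second clause of $(3)$. Therefore $m \le k$, and (padding with empty parts if needed) we obtain a partition of $V(G)$ into $k$ cliques, showing $G$ is co-$k$-partite.

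The only place one has to be mildly careful is the interaction between avoiding $\BB$ and avoiding $\RBB$ in step $(2) \Rightarrow (3)$: the first forbids a blue $P_3$ whose endpoints are non-adjacent, the second forbids a blue $P_3$ whose endpoints are joined by a red edge, and only together do they force the blue graph to be a disjoint union of cliques. Once this is spelled out, the rest is a straightforward pigeonhole argument.
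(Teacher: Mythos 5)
Your proof is correct and follows essentially the same route as the paper's: the same blue-inside-cliques/red-between-cliques colouring for $(1)\Rightarrow(2)$, the same reading of $\{\BB,\RBB\}$-freeness as forcing the blue graph to be a spanning cluster, and the same pigeonhole argument bounding the number of blue components by $k$. You merely traverse the cycle of implications in the opposite order and spell out the details that the paper leaves as "immediate".
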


\begin{proof}
It is immediate to see that the third item implies that the vertex set of $G$
admits a partition $(X_1,\dots, X_j)$ into $j$ complete subgraphs where $j \leq
k$. Thus, the third item implies the first one.

To see that the first statement implies the second one, consider a  partition
$(X_1,\dots, X_j)$ of $V(G)$ into $j$ cliques where $j \leq k$. Colour an edge
$e$ of $G$ blue if it belongs to $E(X_i)$, for some $i \in \{1, \dots, j\}$, and
red otherwise. Since $j \le k$, any $k+1$ vertices induce at least one blue
edge, hence this colouring of $G$ is $\overline{B}_k$-free. Clearly, by removing
the red edges from $G$, we obtain a cluster and so, this colouring is also a
$\{\BB, \RBB\}$-free colouring of $G$.

Finally, to see that the second item implies the third one, notice that the
($\{\BB,\allowbreak \RBB\}\cup\allowbreak \overline{B}_{k+1}$)-free
$2$-edge-colouring of $G$ defines a partition $(B,R)$ of $E(G)$ that satisfies
the third statement.
\end{proof}

As hinted above, we can recover \cref{pro:co-bip} as a particular instance of
\cref{prop:co-k-partite}.   For $k = 2$, \cref{prop:co-k-partite} implies that a
graph $G$ is co-bipartite if and only if it admits a $\{\IS, \R, \RR, \BB, \RT,
\RBB\}$-free $2$-edge-colouring.

We now consider an example of a family of graphs classes expressible by
forbidden $k$-edge-coloured graphs, with $k \ge 2$. This example is an extension
of the proposed characterization of line graphs of bipartite graphs
(\cref{prop:J3J4J8J9}) to line graphs of $k$-partite graphs. Let $k$ be a
positive integer such that $k \ge 2$.  For an integer $l$, with $l \le k$, we
abuse nomenclature and say that a vertex $x$ is incident with $l$ colours if $x$
is incident with $l$ edges of pairwise different colours. A triangle is
\emph{rainbow} if its three edges have different colours. We denote by $L_k$ the
set of $k$-coloured graphs that consists of the following graphs: each
monochromatic $P_3$, each non-monochromatic and non-rainbow triangle, and each
$k$-edge-coloured graph on $4$ vertices that contains a vertex incident with $3$
colours (but does not contain any of the previous mentioned paths and
triangles). In particular, $L_2 = \{\RR, \BB, \RRB, \RBB\}$, and in
\cref{fig:3-colours} we illustrate the $3$-edge-coloured graphs in $L_3$.

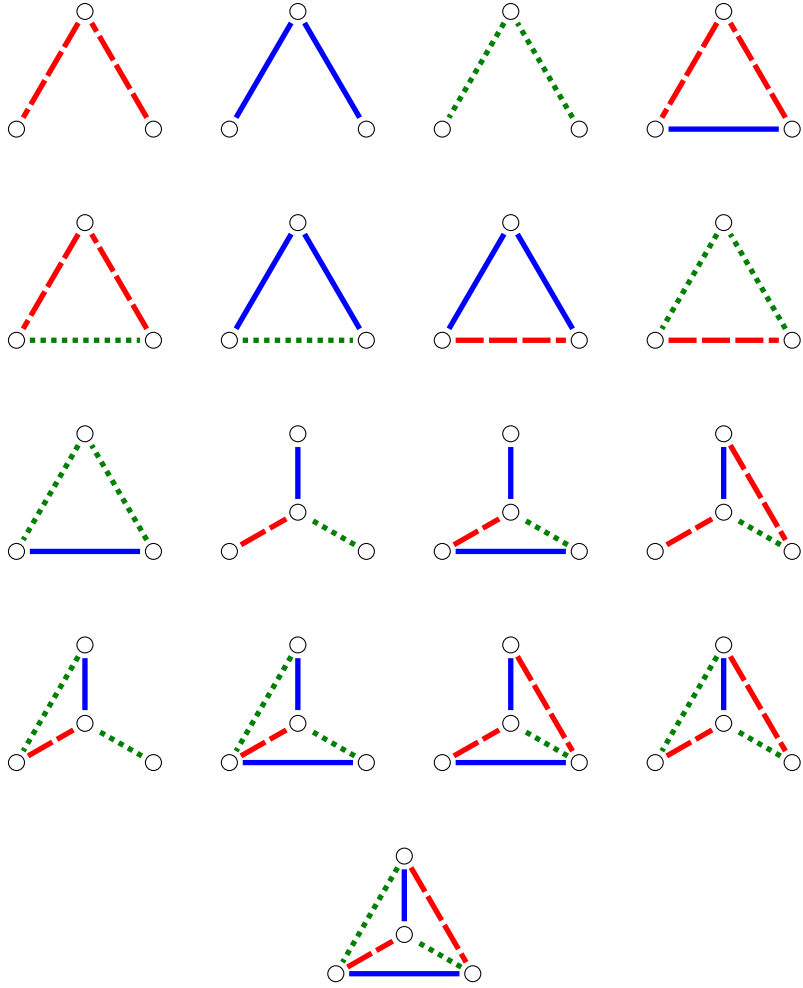
\begin{figure}[ht!]
\centering
\begin{tikzpicture}
\begin{scope}[scale=0.8]
  \begin{scope}
    \foreach \i in {0,1,2}
      \draw ({(360/3)*\i + 90}:1.3) node(\i)[vertex]{};

    \draw [redE] (0) to (1);
    \draw [redE] (2) to (0);
  \end{scope}
  
  \begin{scope}[xshift=3.5cm]
    \foreach \i in {0,1,2}
      \draw ({(360/3)*\i + 90}:1.3) node(\i)[vertex]{};

    \draw [blueE] (0) to (1);
    \draw [blueE] (2) to (0);
  \end{scope}
  
  \begin{scope}[xshift=7cm]
    \foreach \i in {0,1,2}
      \draw ({(360/3)*\i + 90}:1.3) node(\i)[vertex]{};

    \draw [greenE] (0) to (1);
    \draw [greenE] (2) to (0);
  \end{scope}
  
  \begin{scope}[xshift=10.5cm]
    \foreach \i in {0,1,2}
      \draw ({(360/3)*\i + 90}:1.3) node(\i)[vertex]{};

    \draw [redE]  (0) to (1); \draw [blueE] (1) to (2); \draw [redE] (2) to (0);
  \end{scope}

  \begin{scope}[yshift=-3.5cm]
    \foreach \i in {0,1,2}
      \draw ({(360/3)*\i + 90}:1.3) node(\i)[vertex]{};
    
    \draw [redE]   (0) to (1);
    \draw [greenE] (1) to (2);
    \draw [redE]   (2) to (0);
  \end{scope}
  
  \begin{scope}[xshift=3.5cm, yshift=-3.5cm]
    \foreach \i in {0,1,2}
      \draw ({(360/3)*\i + 90}:1.3) node(\i)[vertex]{};
    
    \draw [blueE]  (0) to (1);
    \draw [greenE] (1) to (2);
    \draw [blueE]  (2) to (0);
  \end{scope}
  
  \begin{scope}[xshift=7cm,yshift=-3.5cm]
    \foreach \i in {0,1,2}
      \draw ({(360/3)*\i + 90}:1.3) node(\i)[vertex]{};
    
    \draw [blueE] (0) to (1);
    \draw [redE]  (1) to (2);
    \draw [blueE] (2) to (0);
  \end{scope}
  
  \begin{scope}[xshift=10.5cm,yshift=-3.5cm]
    \foreach \i in {0,1,2}
      \draw ({(360/3)*\i + 90}:1.3) node(\i)[vertex]{};
    
    \draw [greenE] (0) to (1);
    \draw [redE]   (1) to (2);
    \draw [greenE] (2) to (0);
  \end{scope}

  \begin{scope}[yshift=-7cm]
    \foreach \i in {0,1,2}
      \draw ({(360/3)*\i+ 90}:1.3) node(\i)[vertex]{};

      \draw [greenE] (0) to (1);
      \draw [blueE]  (1) to (2);
      \draw [greenE] (2) to (0);
  \end{scope}
  
  \begin{scope}[xshift=3.5cm,yshift=-7cm]
    \foreach \i in {0,1,2}
      \draw ({(360/3)*\i+ 90}:1.3) node(\i)[vertex]{};
    \node(3)[vertex]{};
    
    \draw [blueE]  (0) to (3);
    \draw [redE]   (1) to (3);
    \draw [greenE] (2) to (3);
  \end{scope}
  
   \begin{scope}[xshift=7cm, yshift=-7cm]
    \foreach \i in {0,1,2}
      \draw ({(360/3)*\i+ 90}:1.3) node(\i)[vertex]{};
    \node(3)[vertex]{};
    
    \draw [blueE]  (0) to (3);
    \draw [redE]   (1) to (3);
    \draw [greenE] (2) to (3);
    \draw [blueE]  (1) to (2); 
  \end{scope}
  
  \begin{scope}[xshift=10.5cm, yshift=-7cm]
    \foreach \i in {0,1,2}
      \draw ({(360/3)*\i+ 90}:1.3) node(\i)[vertex]{};
    \node(3)[vertex]{};
    
    \draw [blueE]  (0) to (3);
    \draw [redE]   (1) to (3);
    \draw [greenE] (2) to (3);
    \draw [redE]   (0) to (2); 
  \end{scope}
  
  \begin{scope}[yshift=-10.5cm]
    \foreach \i in {0,1,2}
      \draw ({(360/3)*\i+ 90}:1.3) node(\i)[vertex]{};
    \node(3)[vertex]{};
    
    \draw [blueE]  (0) to (3);
    \draw [redE]   (1) to (3);
    \draw [greenE] (2) to (3);
    \draw [greenE] (0) to (1); 
  \end{scope}
  
  \begin{scope}[xshift=3.5cm,yshift=-10.5cm]
    \foreach \i in {0,1,2}
      \draw ({(360/3)*\i+ 90}:1.3) node(\i)[vertex]{};
    \node(3)[vertex]{};
    
    \draw [blueE]  (0) to (3);
    \draw [redE]   (1) to (3);
    \draw [greenE] (2) to (3);
    \draw [greenE] (0) to (1);
    \draw [blueE]  (1) to (2); 
  \end{scope}
  
   \begin{scope}[xshift=7cm, yshift=-10.5cm]
    \foreach \i in {0,1,2}
      \draw ({(360/3)*\i+ 90}:1.3) node(\i)[vertex]{};
    \node(3)[vertex]{};
    
    \draw [blueE]  (0) to (3);
    \draw [redE]   (1) to (3);
    \draw [greenE] (2) to (3);
    \draw [blueE]  (1) to (2);
    \draw [redE]   (0) to (2); 
  \end{scope}
  
  \begin{scope}[xshift=10.5cm, yshift=-10.5cm]
    \foreach \i in {0,1,2}
      \draw ({(360/3)*\i+ 90}:1.3) node(\i)[vertex]{};
    \node(3)[vertex]{};
    
    \draw [blueE]  (0) to (3);
    \draw [redE]   (1) to (3);
    \draw [greenE] (2) to (3);
    \draw [redE]   (0) to (2);
    \draw [greenE] (0) to (1); 
  \end{scope}
  
  \begin{scope}[xshift = 5.25cm, yshift=-14cm]
    \foreach \i in {0,1,2}
      \draw ({(360/3)*\i+ 90}:1.3) node(\i)[vertex]{};
    \node(3)[vertex]{};
    
    \draw [blueE]  (0) to (3);
    \draw [redE]   (1) to (3);
    \draw [greenE] (2) to (3);
    \draw [greenE] (0) to (1);
    \draw [blueE]  (1) to (2);
    \draw [redE]   (0) to (2); 
  \end{scope}
\end{scope}
\end{tikzpicture}
\caption{The set $L_3$ of $3$-edge-coloured graphs that expresses line graphs of
$3$-partite graphs.}
\label{fig:3-colours}
\end{figure}

\begin{proposition}
\label{prop:linegraph-k-partite}
  For a graph $G$, the following statements are equivalent:
  \begin{enumerate}
    \item $G$ is the line graph of a $k$-colourable graph.
    \item $G$ admits an $L_k$-free $k$-edge-colouring.
    \item $G$ admits a $k$-edge-colouring with no monochromatic $P_3$, such that
      every vertex is incident with at most $2$ colours, and all triangles
      are either monochromatic or rainbow.
  \end{enumerate}
\end{proposition}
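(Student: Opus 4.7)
The plan is to show (2)$\Leftrightarrow$(3) directly from the definition of $L_k$ and (1)$\Leftrightarrow$(3) by an explicit construction, with (3)$\Rightarrow$(1) as the technical heart. The equivalence (2)$\Leftrightarrow$(3) is essentially built into $L_k$: its three layers of patterns (monochromatic $P_3$'s, non-monochromatic non-rainbow triangles, and $4$-vertex graphs having a vertex incident with three colours but avoiding the previous two) encode exactly the three clauses of (3). A vertex of $G$ incident with three distinct colours produces an induced $4$-vertex subgraph with a $3$-coloured vertex; if that subgraph contains a monochromatic $P_3$ or a bad triangle then the colouring already fails to be $L_k$-free, and otherwise it is a pattern of the third type.

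For (1)$\Rightarrow$(3), fix a proper $k$-colouring $c\colon V(H)\to[k]$ and colour the edge $\{e_1,e_2\}$ of $L(H)$ with $c(u)$, where $u$ is the vertex of $H$ shared by $e_1$ and $e_2$. A vertex $e=uv$ of $L(H)$ is incident only with colours $c(u)$ and $c(v)$, so with at most two. A monochromatic induced $P_3$ in $L(H)$ would force two consecutive shared vertices of $H$ to agree in colour and hence, by properness, to coincide, turning the $P_3$ into a triangle; and every triangle of $L(H)$ arises either from three edges of $H$ meeting at a common vertex (yielding a monochromatic triangle) or from a triangle of $H$ (yielding a rainbow triangle, by properness).

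For (3)$\Rightarrow$(1), the crux is the following structural lemma: for each $g\in V(G)$ and each colour $c$ that appears on some edge at $g$, the set $N_c(g)\cup\{g\}$ consisting of $g$ and its $c$-neighbours is the \emph{unique} maximal $c$-monochromatic clique of $G$ through $g$. The no-monochromatic-$P_3$ hypothesis forces this set to be a clique; the triangle condition then forces every induced edge within it to be $c$-coloured (any other colour would create a non-monochromatic, non-rainbow triangle at $g$); and the same triangle argument shows no further vertex extends the clique. Since $g$ is incident with at most two colours, it lies in at most two maximal monochromatic cliques of size at least two. Let $\mathcal{M}$ denote the family of all such cliques, each carrying its colour $c(M)\in[k]$. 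Construct $H$ by taking one vertex $v_M$ of colour $c(M)$ for each $M\in\mathcal{M}$, and declaring each $g\in V(G)$ to be an edge of $H$ joining its (at most two) clique-vertices, padding with fresh leaves of non-conflicting colours when necessary (which is possible because $k\geq 2$). Then $H$ is simple: if two distinct $g\neq g'$ were the same edge of $H$, the pair would lie in the same two distinct cliques, forcing those cliques to meet in both a $c_1$-edge and a $c_2$-edge and contradicting uniqueness. The colouring of $V(H)$ is proper because distinct maximal monochromatic cliques of a common colour are disjoint (uniqueness again); and $L(H)=G$ since $L(H)$-adjacency, common membership in a clique of $\mathcal{M}$, and $G$-adjacency all coincide.

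The main obstacle is setting up the uniqueness-of-maximal-monochromatic-cliques lemma, which uses both clauses of (3) in tandem. Once it is in place, the construction of $H$ is a multicolour generalisation of the one used in \cref{prop:J3J4J8J9}, with the added bookkeeping of leaf vertices to cover those vertices of $G$ incident with fewer than two colours.
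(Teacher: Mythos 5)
Your proof is correct and follows essentially the same route as the paper's: the same edge-colouring of $L(H)$ for (1)$\Rightarrow$(3), and for (3)$\Rightarrow$(1) the same observation that the colour classes are clusters whose cliques partition $E(G)$ and meet each vertex at most twice. The only divergence is that at this point the paper simply invokes Beineke's (Krausz-type) characterization of line graphs, whereas you re-derive the reconstruction of $H$ by hand via your uniqueness-of-maximal-monochromatic-cliques lemma and the leaf-padding construction; both are sound.
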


\begin{proof}
The last two items are clearly equivalent. We first prove that the first
statement implies the third one. Let $G$ be the line graph of a $k$-partite
graph $H(X_1,\dots, X_k)$. Notice that each edge $e$ of $G$ corresponds to a
pair of edges of $H$ incident in some $X_i$. This defines a $k$-edge-colouring
of $G$. It is straightforward to verify that this edge-colouring of $G$
satisfies the third statement. We see that the third item implies the first one.
Since there is no monochromatic $P_3$ in $G$, each colour class induces a
cluster. This induces a partition of the edge set of $G$ into maximal complete
subgraphs. Moreover, no vertex belongs to more than two of these subgraphs
because every vertex is incident with at most $2$-colours (and each colour class
is a cluster). Thus, $G$ is the line graph of some graph $H$ (Theorem 1
in~\cite{beinekeJCTB9}). Finally, it is not hard to see that the
$k$-edge-colouring of $G$ translates to a $k$-partition of $H$ into independent
sets. The claim follows.
\end{proof}

To conclude this brief section, we see that expression by forbidden coloured
graphs include certain edge-partition families. Given a pair of hereditary
properties $\mathcal{P}$ and $\mathcal{Q}$, a $(\mathcal{P,Q})$-edge-partition
of a graph $G$, is a partition $(B,R)$ of the edge set of $G$ such that $G-R$
belongs to $\mathcal{P}$, and $G-B$ belongs to $\mathcal{Q}$. Clearly, if
$\mathcal{P}$ and $\mathcal{Q}$ have a finite set of minimal obstructions, then
the class of graphs that admit a $(\mathcal{P,Q})$-edge-partition is expressible
by forbidden $2$-edge-coloured graphs. Indeed, for a graph $F$ let $B(F)$
(resp.\ $R(F)$) denote the set of $2$-edge-coloured graphs $F'$ such that the
blue subgraph (resp.\ red subgraph) of $F'$ is isomorphic to $F$. For instance,
$B(\overline{P_3})$ is the set $\{\B, \RB, \RRB\}$. It is evident that  the blue
subgraph of a $2$-edge-coloured graph $G'$ is $F$-free if and only if $G$ is
$B(F)$-free. Thus, by considering the union of $B(H)$ and $R(F)$, where $H$ and
$F$ range over  minimal obstructions of $\mathcal{P}$ and $\mathcal{Q}$
respectively, we obtain a set that expresses
$(\mathcal{P,Q})$-edge-partitionable graphs by $2$-edge-coloured graphs.

The observation above also holds for $k$-edge-coloured graphs. Let $k$ be a
positive integer and $i\le k$. For a graph $H$, we denote by $i(H,k)$ the set of
$k$-edge-coloured graphs $F'$ such that the $i$-colour subgraph of $F'$ is
isomorphic to $H$. The following lemma is self-evident.

\begin{lemma}
\label{i-H-k}
  Consider a pair of graphs $G$ and $H$, and let $i$ and $k$ be a pair of
  positive integers with $i \le k$. If $G'$ is a $k$-edge-colouring of $G$, then
  $G'$ is $i(H,k)$-free if and only if the $i$-colour subgraph of $G'$ is
  $H$-free.
\end{lemma}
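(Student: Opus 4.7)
The plan is to simply unfold the definitions on both sides of the biconditional. Recall that $i(H,k)$ consists of all $k$-edge-coloured graphs whose $i$-colour subgraph is isomorphic to $H$, and that $G'$ being $i(H,k)$-free means $G'$ contains no induced $k$-edge-coloured copy of any member of $i(H,k)$. On the other hand, the $i$-colour subgraph $G'_i$ of $G'$ being $H$-free means that $G'_i$ contains no induced copy of $H$. Both statements concern the existence of a vertex subset $S \subseteq V(G)$ with a certain structural property, so the natural strategy is to show that the same subset $S$ witnesses one side if and only if it witnesses the other.

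For the forward direction (by contrapositive), I would assume that $G'_i$ contains an induced copy of $H$ on vertex set $S$. Let $F' := G'[S]$, viewed as a $k$-edge-coloured induced subgraph of $G'$. By construction, the $i$-colour subgraph of $F'$ equals $G'_i[S]$, which is isomorphic to $H$. Hence $F' \in i(H,k)$, contradicting the $i(H,k)$-freeness of $G'$. For the converse (again by contrapositive), suppose $G'$ contains some $F' \in i(H,k)$ as an induced $k$-edge-coloured subgraph, sitting on vertex set $S$. Then the $i$-colour subgraph of $F'$, which equals $G'_i[S]$, is isomorphic to $H$, so $H$ is an induced subgraph of $G'_i$.

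The only subtle point, which I would state once at the start, is the compatibility between taking induced $k$-edge-coloured subgraphs of $G'$ and taking induced subgraphs of the $i$-colour subgraph $G'_i$: for any $S \subseteq V(G)$, the $i$-colour subgraph of $G'[S]$ coincides with $G'_i[S]$. This is immediate from the fact that both operations are defined vertex-wise and do not alter edge colours. Once this observation is in place, no further work is required, so there is no genuine obstacle — the lemma really is a direct rephrasing, which justifies the authors' remark that it is self-evident.
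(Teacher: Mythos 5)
The paper offers no proof of this lemma, simply declaring it ``self-evident,'' and your argument is exactly the definition-unfolding that the authors have in mind: the key compatibility fact that the $i$-colour subgraph of $G'[S]$ coincides with $G'_i[S]$ is correctly identified and both directions follow immediately from it. Your proof is correct and matches the intended (trivial) argument.
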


To illustrate this lemma, we consider $2$-edge-colourings and the graph $P_3$.
Recall that a graph $G$ is $P_3$-free if and only if it is a cluster.  Since
$1(P_3,2) = \{\BB,\RBB\}$ and $2(P_3,2) = \{\RR, \RRB\}$, by \Cref{i-H-k} we
conclude that the edge set of graph $G$ can be covered by two clusters if and
only if it admits a $\{\RR, \BB, \RRB, \RBB\}$-free $2$-edge-colouring. We
already observed that a graph $G$ admits a $\{\RR, \BB, \RRB, \RBB\}$-free
$2$-edge-colouring if and only if it is a line graph of a bipartite graph
(\cref{prop:J3J4J8J9}). Hence, putting both these observations together, we
recover the well-known result stating that a graph is the line graph of a
bipartite graph if and only if its edge-set can be covered by two
clusters~\cite{petersonDAM126}. In general, using \Cref{i-H-k} we conclude the
following statement.

\begin{proposition}
\label{prop:edge-partition}
  Let $\mathcal{P}_1$, \dots, $\mathcal{P}_k$ be $k$ hereditary properties with
  set of minimal obstructions $M_i$ for $i\in\{1,\dots, k\}$. If
  $M_i$ is a finite set for each $i\in\{1,\dots, k\}$, then the class of graphs
  whose edge set admits a partition $(E_1,\dots, E_k)$ such that the spanning
  subgraph with edge set $E_i$ belongs to $\mathcal{P}_i$, is expressible by
  forbidden $k$-edge-coloured graphs.
\end{proposition}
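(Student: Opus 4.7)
The plan is to construct the forbidden set explicitly and then verify the equivalence via a straightforward extension of \cref{i-H-k} to $k$ colours. Namely, I would define
\[
  \calF := \bigcup_{i=1}^{k} \bigcup_{H \in M_i} i(H,k).
\]
Since each $M_i$ is finite and, for any fixed graph $H$, there are only finitely many $k$-edge-colourings of complete graphs on $|V(H)|$ vertices whose $i$-colour subgraph is isomorphic to $H$, each set $i(H,k)$ is finite; hence $\calF$ is a finite set of $k$-edge-coloured graphs, as required by the definition of expressibility.

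Next I would observe that \cref{i-H-k} extends verbatim to $k$-edge-colourings: a $k$-edge-colouring $G'$ of $G$ contains no induced copy of any graph in $i(H,k)$ if and only if the $i$-colour subgraph of $G'$ is $H$-free. Consequently, $G'$ is $\calF$-free if and only if, for every $i \in \{1,\dots,k\}$, the $i$-colour subgraph of $G'$ is $M_i$-free; since $M_i$ is by hypothesis the full set of minimal obstructions of the hereditary class $\mathcal{P}_i$, this is in turn equivalent to the $i$-colour subgraph belonging to $\mathcal{P}_i$.

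The equivalence with the edge-partition property then splits into two routine directions. If $G$ admits a partition $(E_1,\dots,E_k)$ with the spanning subgraph on $E_i$ in $\mathcal{P}_i$, then colouring each edge of $E_i$ with colour $i$ produces a $k$-edge-colouring whose $i$-colour subgraph lies in $\mathcal{P}_i$, hence is $M_i$-free, hence avoids $\calF$. Conversely, any $\calF$-free $k$-edge-colouring of $G$ yields, by grouping edges by their colour, a partition $(E_1,\dots,E_k)$ of $E(G)$ with each $i$-colour subgraph in $\mathcal{P}_i$, i.e., a $(\mathcal{P}_1,\dots,\mathcal{P}_k)$-edge-partition.

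There is no serious obstacle here: the argument is essentially bookkeeping on top of the $k$-colour generalization of \cref{i-H-k}, together with the standard fact that a hereditary class is characterized by its set of minimal forbidden induced subgraphs. The only point that deserves a brief explicit mention in the proof is the finiteness of each $i(H,k)$, which guarantees that $\calF$ is a legitimate finite forbidden set in the sense of our definition.
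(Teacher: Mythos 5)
Your proof is correct and follows essentially the same route as the paper: the paper also takes $\calF$ to be the union of the sets $i(H,k)$ over $H\in M_i$ and $i\in\{1,\dots,k\}$ and derives the equivalence directly from \cref{i-H-k} together with the fact that membership in $\mathcal{P}_i$ is equivalent to $M_i$-freeness. The only cosmetic slip is your description of $i(H,k)$ as colourings of \emph{complete} graphs on $|V(H)|$ vertices — the underlying graph of a member of $i(H,k)$ need not be complete — but this does not affect the finiteness claim or the argument.
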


In \cref{tab:edge-partition} we illustrate \cref{prop:edge-partition} by showing
some sets of $2$-edge-coloured graphs on three vertices together with the
edge-partition graph class that they express.

\begin{table}[ht!]
\begin{center}
  \begin{adjustbox}{max width=0.9\textwidth}
  \rowcolors{2}{white}{gray!25}  
  \begin{tabular}{| c | l |}
    \hline
      $\calF$
    & Edge-partition class expressed by $\calF$ \\
    \hline 
      $\{\B, \RB, \RRB\} \cup \{\R, \RB, \RBB\}$
    & (Complete multipartite, Complete multipartite) \\
    \hline
      $\{\B, \RB, \RRB\} \cup \{\RR, \RRB\}$
    & (Complete multipartite, Cluster) \\
    \hline
      $\{\B, \RB, \RRB\} \cup \{\RT\}$
    & (Complete multipartite, Triangle-free) \\
    \hline
      $\{\B, \RB, \RRB\} \cup \{\RR, \RRB, \RT\}$& (Complete multipartite,
      $\Delta \le 2$) \\
    \hline
      $\{\B, \RB, \RRB\} \cup \{\R, \RB, \RT,\RBB\}$
    & (Complete multipartite, Complete bipartite) \\
    \hline
      $ \{\BB, \RBB\} \cup  \{\RR, \RRB\}$
    & (Cluster, Cluster)  \\
    \hline
      $ \{\BB, \RBB\} \cup  \{\RT\}$
    & (Cluster, Triangle-free)  \\
    \hline
      $\{\BB, \RBB\} \cup \{\RR, \RRB, \RT\}$
    & (Cluster, $\Delta \le 2$) \\
    \hline
      $ \{\BB, \RBB\}   \cup \{\R, \RB, \RT,\RBB\}$
    &  (Cluster,  Complete bipartite) \\
    \hline
      $\{\BT\} \cup \{\RT\}$
    & (Triangle free, Triangle free) \\
    \hline
      $\{\BT\} \cup \{\RR, \RRB, \RT\}$
    & (Triangle free, $\Delta \le 2$) \\
    \hline
      $\{\BT\} \cup \{\R, \RB, \RT,\RBB\}$
    & (Triangle free, Complete bipartite) \\
    \hline
      $\{\BB, \RBB,\BT\} \cup \{\R, \RB, \RT,\RBB\}$
    & ($\Delta \le 2$, $\Delta \le 2$) \\
    \hline
      $\{\BB, \RBB,\BT\} \cup \{\R, \RB, \RT,\RBB\}$
    & ($\Delta \le 2$, Complete bipartite) \\
    \hline
  \end{tabular}
  \end{adjustbox}
  \caption{Edge-partition classes expressed by $2$-edge-coloured graphs on three
  vertices (see \cref{i-H-k,prop:edge-partition}). Notice that some of these
  edge-partition classes have simpler descriptions. For instance, a graph $G$
  admits an edge-partition into two spanning complete multipartite graphs if and
  only if $G$ is a complete multipartite graph. On the contrary, it is
  $NP$-complete to test if a graph $G$ admits an edge-partition into two
  triangle-free graphs \cite{burrMRT1990}.}
  \label{tab:edge-partition}
\end{center}
\end{table}

 
\section{Conclusions}
\label{sec:conclusions}

We studied classes expressible by sets of 2-edge-coloured graphs on three
vertices and focused on both structural and complexity aspects. Regarding
structural results, we focused mainly on the following two cases and managed to
relate the resulting families to other well-known hereditary classes:
\begin{enumerate}
    \item Graphs expressible by a set $\cal F$ consisting of 2-edge coloured
      graphs on three vertices and at most two edges. We succeeded in providing
      a characterization of all such classes as shown in \Cref{thm:two-edges}.
	\item Graphs expressible by a set $\cal F$ containing both monochromatic
      paths, and thus subclasses of perfect graphs. In this case we also
      succeeded in proposing a characterizations of all such classes presented
      in \Cref{fig:landscape}.
\end{enumerate}

For most of these characterizations we presented the list of minimal
obstructions to the class expressed by $\calF$. However, we left two such
characterizations open: list the minimal obstructions of skew-augmentations of
bipartite multigraphs, and of $X$-augmentations of line graphs of incidence
graphs (compare to \cref{RR-BB-RRB,BB-RR-BT-RRB}).

We further focused on the algorithmic perspective and showed, using tools from
constraint satisfaction theory, that several sets of $2$-edge-coloured graphs
express a class which can be recognized in polynomial time
(\Cref{tab:complexity-landscape}). We also show that whenever this method yields
a reduction to an $\NP$-complete CSP, and $\calF$ is a set of triangles, then
$\calF$ expresses a class which is $\NP$-hard to recognize
(\Cref{cor:triangles}). We underline that this is not necessarily always the
case: we use our structural results to show that some sets $\calF$ express a
class recognizable in polynomial-time (Lemma~\ref{lem:elementary-cases}), while
the CSP approach provided a reduction to an $\NP$-complete problem. We presented
two open questions regarding the computational complexity of the $\calF$-free
colouring problem (Questions~\ref{qst:complete-obstructions}
and~\ref{qst:NP-intermediate}).

\Cref{sec:general} also provides an obvious way and a guiding line to generalize
our results towards larger patterns and/or more colours. Since neither
structural nor complexity classifications for forbidden sets of
$2$-edge-coloured graphs on three vertices are complete, we propose the
following problem.

\begin{problem}
Classify the complexity of recognizing each graph class expressed by a set of
$2$-edge-coloured-graphs on three vertices. 
\end{problem}

\begin{problem}
List all graph classes expressed by a set of $2$-edge-coloured-graphs on three
vertices.
\end{problem}

As mentioned above, characterizations of graph classes by forbidden orientations
have been already considered in the literature~\cite{guzmanArXiv,skrienJGT6}. We
ask if the expressive power of forbidden orientations and of forbidden
$2$-edge-coloured graphs are incomparable, i.e., are there graph classes
expressible by forbidden orientations and not by forbidden $2$-edge-coloured
graphs? (and vice versa). In particular, we believe that there is no finite set
of $2$-edge-coloured graphs expressing bipartite graphs, and no finite set of
oriented graphs characterizing co-bipartite graphs.

\vspace{2mm}


\appendix


\section{Limitations of the ``natural'' reduction for sets on three vertices.}
\label{ap:complex}

For a positive integer $n$, an \textit{$n$-ary operation} on a (vertex) set $V$
is a function $f\colon V^n\to V$. Given a relational signature $\tau$ and a
$\tau$-structure $\mathbb A$, we say that $\mathbb A$ is \textit{preserved by an
operation} $f\colon V(\mathbb A)^n\to V(\mathbb A)$ if, for every $R \in \tau$
of arity $r$ and $(x^1_1,\dots x^1_r),\dots, (x^n_1,\dots, x^n_r)\in R(\mathbb
A)$, the tuple $(f(x^1_1,\dots, x^n_1), \dots, f(x^1_r,\dots, x^n_r))$ belongs
to $R(\mathbb A)$. For instance, if $G$ is a graph, then a binary operation
$f\colon V(G)^2$ $\to V(G)$ preserves $G$ if and only if $f$ is a homomorphism
from the product $G\times G$ to $G$.\footnote{In graph theoretic contexts, this
product is sometimes called the tensor or the weak product (see,
e.g.,~\cite[Exercise 14.1.18]{bondy2008})} This intuition extends to arbitrary
relational structures.   For a relational structure $\mathbb{A}$, the
\textit{$n$-th power} of $\mathbb A$, denoted $\mathbb A^n$, has vertex set
$V(\mathbb A)^n$ and for each $R \in \tau$ of arity $r$ a tuple $((x^1_1,\dots
x^1_n),\dots, (x^r_1,\dots, x^r_n))$ belongs to the interpretation of $R$ in
$\mathbb A^n$ if and only if $(x^1_1, \dots, x^r_1),\dots, (x^1_n,\dots,
x^r_n)\in R(\mathbb A)$.  An $n$-ary operation $f\colon V(\mathbb A)^n\to
V(\mathbb A)$ \textit{preserves} $\mathbb A$ if and only if $f$ is a
homomorphism from $\mathbb A^n$ to $\mathbb A$.

\begin{theorem}[Schaefer's theorem]\label{thm:schaefer}
    For every structure $\mathbb B$ with a two-element vertex set, either
    $\CSP(\mathbb B)$ is NP-complete, or $\mathbb B$ is preserved by one of the
    following operations,
    \begin{itemize}
        \item the binary \emph{minimum or maximum operation}, i.e., an operation
            $f$ satisfying $f(x,y)=f(y,x)$ and $f(x,x) = x$ for all $x,y \in
            V(\mathbb B)$, 
        \item the \emph{ternary majority operation}, i.e., the (unique)
            operation $f$ satisfying $$f(x,x,y) = f(x,y,x) = f(y,x,x) = x$$ for
            all $x,y \in V(\mathbb B)$, 
        \item the \emph{ternary minority operation}, i.e., the (unique)
            operation $f$ satisfying $$f(x,x,y) = f(x,y,x) = f(y,x,x) = y$$ for
            all $x,y \in V(\mathbb B)$, 
        \item a constant operation, i.e., an operation satisfying $f(x) = f(y)$
            for all $x,y \in V(\mathbb B)$. 
    \end{itemize}
    In all of these cases, $\CSP(\mathbb B)$ is polynomial-time solvable. 
\end{theorem}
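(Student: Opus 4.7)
The plan is to follow the polymorphism-based proof of Schaefer's dichotomy. Let $\mathrm{Pol}(\mathbb{B})$ denote the clone of all finitary operations on the two-element set $V(\mathbb{B})$ preserving $\mathbb{B}$. The backbone of the argument is the Inv--Pol Galois connection: a relation $R\subseteq V(\mathbb{B})^r$ is primitive-positively definable from $\mathbb{B}$ if and only if $R$ is preserved by every operation in $\mathrm{Pol}(\mathbb{B})$, and pp-definability between structures yields polynomial-time reductions between their CSPs (see, e.g., \cite{bodirsky2021}). This reduces the classification to analyzing which operation clones on $\{0,1\}$ guarantee tractability and which force hardness.

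For the tractable direction I would match each of the four listed operations to a concrete polynomial-time algorithm, reusing templates already developed in Section~\ref{sec:complex}. A constant polymorphism makes every instance trivially satisfiable by the constant assignment, so $\CSP(\mathbb{B})$ is in $\PO$. A semilattice polymorphism (min or max) forces every relation of $\mathbb{B}$ to be conjunctively definable by Horn or dual-Horn clauses, which are solved by unit propagation; this is the algorithmic kernel behind \Cref{lem:consistency}. A majority polymorphism implies that every relation of $\mathbb{B}$ equals the conjunction of its binary projections, so arc consistency --- equivalently $2$-SAT --- decides $\CSP(\mathbb{B})$, matching the reduction of \Cref{lem:2-SAT}. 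A minority polymorphism forces every relation to be an affine subspace of $\mathbb{Z}_2^r$, and the CSP becomes a system of linear equations solvable by Gaussian elimination, as in \Cref{lem:linear-equations}. Each matching is verified by a short inductive argument on arity: the polymorphism identity gives enough algebraic structure to close the relation under the relevant operation.

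For the NP-hardness direction I would invoke Post's classification of the clone lattice on $\{0,1\}$. In Post's lattice the clones generated by the four listed operations, together with the clones above them, cover exactly the tractable part; every other clone is contained in a clone of essentially unary operations, i.e.\ projections possibly composed with negation. If $\mathrm{Pol}(\mathbb{B})$ is such an essentially unary clone, one can pp-define from $\mathbb{B}$ a known NP-hard Boolean relation such as $\{(x,y,z) : x\vee y\vee z\}$ or the $1$-in-$3$ relation, giving a polynomial-time reduction from $3$-SAT (or positive $1$-in-$3$-SAT) to $\CSP(\mathbb{B})$, hence NP-hardness; membership in $\NP$ is immediate by guessing an assignment.

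The main obstacle is the appeal to Post's lattice, whose full description is intricate. The cleanest self-contained route bypasses the complete lattice by proving directly that if $\mathbb{B}$ admits none of a constant, a semilattice, a majority, nor a minority polymorphism, then every polymorphism of $\mathbb{B}$ is essentially unary. From each of the four failures one extracts an ``obstruction tuple'', and these tuples are composed to squeeze an arbitrary polymorphism into essentially unary form; the combinatorial compression step, together with the resulting pp-definition of an NP-hard Boolean relation, is where the delicate work lies and where I expect the main technical hurdle of the proof.
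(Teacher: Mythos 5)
The paper does not prove this statement: it is Schaefer's classical dichotomy, imported verbatim from the literature (\cite{schaeferACM1978}; the polymorphism formulation you use is the one found in, e.g., \cite{bodirsky2021}) and used in the appendix only as a black box to decide which structures $\mathbb B_\calF$ yield tractable CSPs. So there is no in-paper argument to compare yours against; what you have written is an outline of the standard modern algebraic proof, and at the level of detail given it is essentially sound. Two caveats are worth flagging. First, a constant polymorphism does \emph{not} make every instance trivially satisfiable: an empty relation is preserved by every operation, so the algorithm must first reject instances that use an empty constraint and only then apply the constant assignment --- a point that actually matters in this paper, since $\mathbb B_\calF$ interprets $R_L$ as the empty relation for edgeless $L$. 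Second, the hardness direction needs the precise form of the key lemma: if $\mathrm{Pol}(\mathbb B)$ contains none of the six operations (two constants, $\min$, $\max$, majority, minority), then it is contained in the clone generated by negation and projections; from there one pp-defines (allowing equality and existential quantification) an NP-hard relation such as not-all-equal or positive $1$-in-$3$, with a small case split according to whether negation is actually a polymorphism. You correctly identify this compression step as the technical core; as written it is a plan rather than a proof, but the plan is the right one.
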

Note that for some sets $\calF$, the reduction given by \Cref{lem:boolean-CSP}
is simply a reduction to an NP-complete problem, and for others, we reduce to a
problem known to be solvable in polynomial time. One can distinguish between
these cases using Schaefer's theorem and verifying whether the boolean structure
$\mathbb B_\calF$ is preserved by one of the operations listed
in~\Cref{thm:schaefer}. In particular, the case when $\mathbb B_\calF$ is
preserved by a constant operation is straightforward to phrase in terms of the
forbidden $2$-edge-coloured graphs in $\calF$.  Recall that a set $\calF$ is
trivial when there is a colour $c$ such that for every graph $G$, either $\calF$
contains all    $2$-edge-colourings of $G$, or $\calF$ does not contain the
$c$-monochromatic colouring of $G$.

\begin{lemma}\label{lem:constant}
    Let $\calF$ be a finite set of $2$-edge-coloured graphs. Then $\mathbb
    B_\calF$ is preserved by a constant operation if and only if $\calF$ is a
    trivial set.
\end{lemma}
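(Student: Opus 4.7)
The plan is to translate the constant-operation preservation condition into a combinatorial statement about $\calF$-free colourings, and then prove each direction of the equivalence.

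A constant operation on $\{r,b\}$ is determined by its value $c$, and it preserves $\mathbb{B}_\calF$ iff for every $L\in\ell(\calF)$ with $R_L(\mathbb{B}_\calF)\neq\emptyset$, the all-$c$ tuple of appropriate arity lies in $R_L(\mathbb{B}_\calF)$. Since this tuple encodes the $c$-monochromatic colouring of $L$, and membership in $R_L$ is exactly the $\calF$-freeness of that colouring, preservation by constant-$c$ is equivalent to the combinatorial statement: whenever $L\in\ell(\calF)$ admits \emph{some} $\calF$-free colouring, its $c$-monochromatic colouring is already $\calF$-free.

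For the direction ``trivial $\Rightarrow$ preserved'', assume $\calF$ is trivial with witness $c$, and let $L\in\ell(\calF)$ have an $\calF$-free colouring $L^*$. Any induced coloured subgraph of the $c$-monochromatic $L$ is the $c$-monochromatic colouring of some induced subgraph $H\subseteq L$; if one of these lay in $\calF$, triviality with witness $c$ would imply that every colouring of $H$ is in $\calF$, and in particular $L^*[V(H)]$ would be an induced $\calF$-pattern of $L^*$, contradicting its $\calF$-freeness. Hence the $c$-monochromatic $L$ is $\calF$-free, giving the combinatorial condition above.

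For the converse, fix any $F\in\calF$ that is the $c$-monochromatic colouring of some graph $G$, set $L=G\in\ell(\calF)$, and observe that the $c$-monochromatic colouring of $L$ equals $F$, so it is not $\calF$-free; preservation thus forces $L$ to admit no $\calF$-free colouring. The heart of the argument is to upgrade this to the statement that \emph{every} $2$-edge-colouring of $G$ lies in $\calF$, which is exactly triviality of $\calF$ with witness $c$. I would do this by strong induction on $|V(G)|$: supposing some colouring $G'\notin\calF$ existed, $G'$ would have to contain an induced pattern $F_1\in\calF$ on a proper subset $V_1\subsetneq V(G)$; writing $H=G[V_1]$, a case analysis on whether the $c$-monochromatic colouring of $H$ lies in $\calF$ — using the induction hypothesis in the positive case, and applying the preservation condition to $L_1=H$ in the negative — should yield a contradiction. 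The main obstacle here is the interaction between monochromatic induced subpatterns (which drive the triviality condition) and arbitrary coloured induced subpatterns (which are the only thing preservation by a single constant directly controls), and handling this interaction cleanly — for instance by restricting first to the case where all patterns in $\calF$ have the same underlying size, where the argument collapses immediately — is the crux of the proof.
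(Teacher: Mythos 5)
The paper itself offers no proof of this lemma (it is asserted as ``straightforward'' just before its statement), so there is nothing to compare your argument against line by line; I assess it on its own terms. Your translation of constant-operation preservation into the combinatorial condition ``for every $L\in\ell(\calF)$ admitting an $\calF$-free colouring, the $c$-monochromatic colouring of $L$ is $\calF$-free'' is correct, and your proof of the direction ``trivial $\Rightarrow$ preserved'' is complete and correct.

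The converse, however, is not proved: you describe an intended induction and say a case analysis ``should yield a contradiction,'' but the obstacle you identify is real and cannot be overcome, because the converse of the lemma as stated is false. Take $\calF=\{{}^rK_2,\ \BB\}$, the single red edge together with the blue monochromatic $P_3$. This set is not trivial: for $c=r$ the graph $K_2$ violates the definition (its $r$-monochromatic colouring lies in $\calF$ but ${}^bK_2\notin\calF$), and for $c=b$ the graph $P_3$ does (its $b$-monochromatic colouring $\BB$ lies in $\calF$ but $\RR\notin\calF$). Yet $\mathbb B_\calF$ is preserved by the constant operation with value $b$: the only nonempty relation is $R_{K_2}=\{(b)\}$, since no colouring of $P_3$ is $\calF$-free (each of $\RR$ and $\RB$ contains an induced ${}^rK_2$, and $\BB\in\calF$), and empty relations are preserved vacuously. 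This is exactly where your induction breaks down: the colouring $\RR\notin\calF$ is excluded only via the smaller pattern ${}^rK_2$, whose $b$-monochromatic version is $\calF$-free, so applying the preservation hypothesis to $L_1=K_2$ yields no contradiction. Preservation by constant-$c$ only constrains the underlying graphs of members of $\calF$, and only yields ``$L$ admits no $\calF$-free colouring,'' which is strictly weaker than ``every colouring of $L$ lies in $\calF$.'' Your closing remark that the argument collapses when all patterns in $\calF$ share the same underlying graph is exactly right, and that is the setting (sets of coloured paths on three vertices, sets of coloured triangles) in which the paper actually applies the lemma; but for arbitrary finite $\calF$ the ``only if'' direction needs either that extra hypothesis or a weaker formulation of triviality.
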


In general, there does not need to be a simple description of those finite sets
$\calF$ of $2$-edge-coloured graphs such that $\mathbb B_\calF$ is preserved by
the minimum, the maximum, the minority, or the majority operations. However, it
is a decidable process and in particular, we can list such sets when restricted
to $2$-edge-coloured graphs on three vertices.

The reader who is not familiar with operations on a boolean domain can think of
the maximum operation acting on $2$-edge-colourings $c_1,c_2\colon E(G)\to
\{r,b\}$ where $\max(c_1,c_2)(e) = b$ if $c_1(e) =b $ or $c_2(e) = b$, and
$\max(c_1,c_2)(e) = r$ otherwise --- we choose the convention that $r$ is the
minimum and $b$ is the maximum from $\{r,b\}$. Similarly, given three colourings
$c_1,c_2,c_3\colon E(G)\to \{r,b\}$ the minority colouring
$\minority(c_1,c_2,c_3)\colon E(G)\to \{r,b\}$ is defined by $c_1(e)$ if the
three colourings agree on $e$, and otherwise $c_i(e)$ where $c_j(e)\neq c_i(e)$
for $j\in\{1,2,3\}\setminus\{i\}$; in simple words, upon disagreement of
colours, the minority colouring takes the minority vote. Majority and minimum
operation on colourings can be defined analogously. We say that a set of
$2$-edge-colourings $C$ of a graph $G$ is preserved by the maximum operation if
for any $c_1,c_2\in C$, it is the case that $\max(c_1,c_2)\in C$. Similarly, we
say that $C$ is preserved by the maximum operation, the majority operation, and
the minority operation. The following relation between operations preserving
$\calF$-free $2$-edge-colourings of graphs, and operations preserving $\mathbb
B_\calF$ is straightforward to observe.

\begin{observation}\label{obs:preserve-colourings}
  Let $\calF$ be a set of $2$-edge-coloured graphs, and $\mathcal U$ the set
  of underlying graphs of $\calF$. The following statements are equivalent
  for any operation $f\in\{\min, \max, \minority, \majority\}$.
  \begin{itemize}
    \item The structure $\mathbb B_\calF$ is preserved by $f$.
    \item For every graph $G$ the set of $\calF$-free colourings of $G$ is
      preserved by $f$. 
    \item For every graph $G\in \mathcal U$ the set of $\calF$-free colourings
      of $G$ is preserved by $f$.
  \end{itemize}
\end{observation}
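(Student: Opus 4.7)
The plan is to prove the three statements equivalent via the cycle $(1) \Rightarrow (2) \Rightarrow (3) \Rightarrow (1)$. The implication $(2) \Rightarrow (3)$ is immediate because $\mathcal U$ is contained in the class of all graphs, so closure under $f$ of $\calF$-free colourings of every graph specializes to every graph in $\mathcal U$.

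For $(1) \Rightarrow (2)$, I would proceed by contraposition. Suppose $G$ is a graph and $c_1, \dots, c_n$ are $\calF$-free $2$-edge-colourings of $G$ such that $c := f(c_1, \dots, c_n)$ is not $\calF$-free; pick an induced copy of some $F \in \calF$ in the coloured graph $(G, c)$, on vertices $v_{i_1} < \dots < v_{i_k}$, and let $L \in \ell(\calF)$ be the corresponding labelling of the induced subgraph $G[\{v_{i_1}, \dots, v_{i_k}\}]$. Because $\calF$-freeness is hereditary under induced coloured subgraphs, each restriction $c_j|_L$ is an $\calF$-free colouring of $L$, so the encoded tuple $c_{c_j|_L}$ lies in $R_L(\mathbb B_\calF)$. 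Since applying $f$ to tuples componentwise coincides with applying $f$ pointwise to the colourings and then encoding, preservation of $\mathbb B_\calF$ by $f$ would force $c_{c|_L}$ into $R_L(\mathbb B_\calF)$, contradicting the choice of $F$.

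For $(3) \Rightarrow (1)$, the argument is essentially by unpacking the definition of $\mathbb B_\calF$. Fix $L \in \ell(\calF)$; its underlying (unlabelled) graph belongs to $\mathcal U$. By construction, $R_L(\mathbb B_\calF)$ is, via the fixed lexicographic encoding of $E(L)$, in bijective correspondence with the set of $\calF$-free $2$-edge-colourings of the labelled graph $L$. Applying $f$ componentwise to elements of $R_L(\mathbb B_\calF)$ translates through this bijection to applying $f$ pointwise on the associated colourings, which by (3) stays inside the set of $\calF$-free colourings, hence inside $R_L(\mathbb B_\calF)$. The edgeless case is vacuous since the interpretation of $R_L$ is then empty and is preserved by every operation.

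The only real care point is the bookkeeping between (i) labelled versus unlabelled versions of the underlying graph, (ii) the lexicographic encoding of colourings as tuples in $\{r,b\}^{|E(L)|}$, and (iii) the fact that restricting a colouring to an induced subgraph corresponds to projecting the encoding onto the coordinates indexed by the edges of that subgraph; no step of the argument is deeper than translating between colourings and their tuple encodings. I do not expect any technical obstacle beyond this translation.
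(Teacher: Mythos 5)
Your proof is correct, and it is precisely the routine translation between colourings and their tuple encodings that the paper has in mind when it states this as an Observation with no proof at all (it is introduced as ``straightforward to observe''). The only point worth flagging is that in the $(1)\Rightarrow(2)$ direction the induced copy of a pattern $F$ witnessing failure cannot be edgeless (an edgeless induced copy in $(G,c)$ would already appear in each $(G,c_j)$), which is why your contrapositive argument may safely assume $L$ has at least one edge and hence a genuine tuple encoding.
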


Using this observation, it is (tedious, but) straightforward to compute the sets
$\calF$ of $2$-edge-coloured graphs on three vertices for which $\mathbb
B_\calF$ is preserved by the minimum, maximum, majority, or minority operation.
In particular, for any set $\calF$ of $2$-edge-colourings of $\overline{P_3}$,
it is easy to observe that $\mathbb B_\calF$ is preserved by all operations from
Schaefer's theorem. Similarly, if $\calF$ is the set of all $2$-edge-colourings
of some graph $G$, then $\mathbb B_\calF$ is also preserved by all these
operations. In Table~\ref{tab:preserve-P3} we list the proper non-empty subsets
$\calF\subseteq \{\RR,\BB, \RB\}$, and the operations that preserve $\mathbb
B_\calF$ in each case. Similarly, in Table~\ref{tab:preserve-K3} we list the
sets of $2$-edge-coloured triangles $\calF$ together with the operations that
preserve $\mathbb B_\calF$. 

\begin{remark}\label{rmk:colour-symmetry} If $\mathbb B_\calF$ is preserved by
maximum (resp.\ minimum), and $\calF'$ is obtained by changing blue edges by
red edges, and vice-versa, then $\mathbb B_{\calF'}$ is preserved by minimum
(resp.\ maximum). Moreover,  $\mathbb B_\calF$ is preserved by the majority
(resp.\ minority) operation  if and only if $\mathbb B_{\calF'}$  is preserved
by majority (resp.\ minority) operation.
\end{remark}

\begin{table}[ht!]
\begin{center}
  \begin{adjustbox}{max width=0.9\textwidth}
  \rowcolors{2}{white}{gray!25}  
  \begin{tabular}{| c | c | c |}
    \hline
     $\calF$
    & $\calF$-free paths
    & Operations preserving $\mathbb B_\calF$ \\
    \hline
         $\{\RR\}$
      & $\{\BB, \RB\}$
      &  maximum, majority\\
    \hline
      $\{\BB\}$
      & $\{\RR, \RB\}$
      &  minimum, majority\\
    \hline
      $\{\RB\}$
      & $\{\RR, \BB\}$
      &  all\\
  \hline
  $\{\RR, \BB\}$
      & $\{\RB\}$
      & majority, minority \\
  \hline
  $\{\RR, \RB\}$
      & $\{\BB\}$
      &  all\\
  \hline
  $\{\BB, \RB\}$
      & $\{\RR\}$
      &  all\\
  \hline
  $\{\RR, \BB, \RB\}$
      & $\varnothing$
      &  all\\
  \hline
  \end{tabular}
  \end{adjustbox}
  \caption{Non-empty sets of $2$-edge-coloured paths on three vertices $\calF$
  together with the $\calF$-free $2$-edge-coloured paths on three vertices, and
  the non-constant operations from Theorem~\ref{thm:schaefer} that preserve
  $\mathbb B_\calF$. }
  \label{tab:preserve-P3}
\end{center}
\end{table}

\begin{table}[ht!]
\begin{center}
  \begin{adjustbox}{max width=0.9\textwidth}
  \rowcolors{2}{white}{gray!25}  
  \begin{tabular}{| c | c | c |}
    \hline
     $\calF$
    & $\calF$-free triangles
    & Operations preserving $\mathbb B_\calF$ \\
    \hline
         $\{\RT\}$
      & $\{\BT, \RRB, \RBB\}$
      & maximum \\
    \hline
      $\{\RRB\}$
      & $\{\RT, \BT, \RBB\}$
      &  maximum\\
  \hline
  $\{\RT,\BT\}$
      & $\{\RRB, \RBB\}$
      &  none\\
  \hline
  $\{\RT,\RRB\}$
      & $\{\BT, \RBB\}$
      &  maximum, majority\\
  \hline
  $\{\RT,\RBB\}$
      & $\{\BT, \RRB\}$
      &  minority \\
  \hline
  $\{\RRB, \RBB\}$
      & $\{\RT,\BT\}$
      &  all\\
  \hline
  $\{\RT, \BT, \RRB\}$
      & $\{\RBB\}$
      & none\\
  \hline
  $\{\RT,  \RRB\, \RBB\}$
      & $\{\BT\}$
      &  all \\
  \hline
  $\{\RT, \BT, \RRB\, \RBB\}$
      & $\varnothing$
      &  all \\
  \hline
  
  \end{tabular}
  \end{adjustbox}
  \caption{Up to colour symmetry (see Remark~\ref{rmk:colour-symmetry}), all
  non-empty set of $2$-edge-coloured triangles $\calF$ together with the
  $2$-edge-free coloured triangles, and the non-constant operations from
  Theorem~\ref{thm:schaefer} that preserve $\mathbb B_\calF$.}
  \label{tab:preserve-K3}
\end{center}
\end{table}

From Observation~\ref{obs:preserve-colourings} together
with~\Cref{tab:preserve-P3,tab:preserve-K3} (and
Remark~\ref{rmk:colour-symmetry}), one can read off which reductions from
Lemma~\ref{lem:boolean-CSP} yield a polynomial-time algorithm, and for which
$\CSP(\mathbb B_\calF)$ is NP-complete

\end{document}